\newtheorem{theorem}{Theorem}[section]
\newtheorem{corollary}[theorem]{Corollary}
\newtheorem{lemma}[theorem]{Lemma}
\newtheorem{proposition}[theorem]{Proposition}
\theoremstyle{definition}
\newtheorem{definition}[theorem]{Definition}
\newtheorem{remark}[theorem]{Remark}
\theoremstyle{remark}
\renewcommand{\theclaim}{\textup{\theclaim}}
\newtheorem*{acknowledgements}{Acknowledgements}
\numberwithin{equation}{section}
\newcommand\restr[2]{{
  \left.\kern-\nulldelimiterspace 
  #1 
  \vphantom{\big|} 
  \right|_{#2} 
  }}
\def\openone
\newbox\ipbox
\newcommand{\ip}[2]{\left\langle #1\, , \,#2\right\rangle}
\newcommand{\diracb}[1]{\left\langle #1\mathrel{\mathchoice

{\setbox\ipbox=\hbox{$\displaystyle \left\langle\mathstrut
#1\right.$}

\vrule height\ht\ipbox width0.25pt depth\dp\ipbox}

{\setbox\ipbox=\hbox{$\textstyle \left\langle\mathstrut
#1\right.$}

\vrule height\ht\ipbox width0.25pt depth\dp\ipbox}

{\setbox\ipbox=\hbox{$\scriptstyle \left\langle\mathstrut
#1\right.$}

\vrule height\ht\ipbox width0.25pt depth\dp\ipbox}

{\setbox\ipbox=\hbox{$\scriptscriptstyle \left\langle\mathstrut
#1\right.$}

\vrule height\ht\ipbox width0.25pt depth\dp\ipbox}

}\right. }
\newcommand{\dirack}[1]{\left. \mathrel{\mathchoice

{\setbox\ipbox=\hbox{$\displaystyle \left.\mathstrut
#1\right\rangle$}

\vrule height\ht\ipbox width0.25pt depth\dp\ipbox}

{\setbox\ipbox=\hbox{$\textstyle \left.\mathstrut
#1\right\rangle$}

\vrule height\ht\ipbox width0.25pt depth\dp\ipbox}

{\setbox\ipbox=\hbox{$\scriptstyle \left.\mathstrut
#1\right\rangle$}

\vrule height\ht\ipbox width0.25pt depth\dp\ipbox}

{\setbox\ipbox=\hbox{$\scriptscriptstyle \left.\mathstrut
#1\right\rangle$}

\vrule height\ht\ipbox width0.25pt depth\dp\ipbox}

} #1\right\rangle}
\newcommand{\cj}[1]{\overline{#1}}
\newcommand{\bz}{\mathbb{Z}}
\newcommand{\M}{\mathcal{M}}
\newcommand{\br}{\mathbb{R}}
\newcommand{\bc}{\mathbb{C}}
\newcommand{\bn}{\mathbb{N}}
\newcommand{\Ds}{\mathsf{D}}
\newcommand{\nar}[1]{\stackrel{#1}{\rightarrow}}
\def\blfootnote{\xdef\@thefnmark{}\@footnotetext}
\renewcommand{\mod}{\operatorname{mod}}
\def\F{\mathcal{F}}
\def\H{\mathcal{H}}
\def\-{^{-1}}
\def\D{\mathcal{D}}
\def\U{\mathcal{U}}
\def\ty{\emptyset}
\newcommand{\gr}{\operatorname*{graph}}
\newcommand{\domain}{\operatorname*{domain}}
\begin{document}

\title[Unitary groups and spectral sets]{Unitary groups and spectral sets}
\author{Dorin Ervin Dutkay}
\blfootnote{}
\address{[Dorin Ervin Dutkay] University of Central Florida\\
    Department of Mathematics\\
    4000 Central Florida Blvd.\\
    P.O. Box 161364\\
    Orlando, FL 32816-1364\\
U.S.A.\\} \email{Dorin.Dutkay@ucf.edu}

\author{Palle E.T. Jorgensen}
\address{[Palle E.T. Jorgensen]University of Iowa\\
Department of Mathematics\\
14 MacLean Hall\\
Iowa City, IA 52242-1419\\}\email{jorgen@math.uiowa.edu}

\thanks{}
\subjclass[2000]{Primary 47B25 , 47B15, 47B32, 47B40, 43A70, 34K08. Secondary 35P25, 58J50. } 
\keywords{Fuglede conjecture, unbounded operators, domains, adjoints, spectrum, deficiency-indices, Hermitian operators, self-adjoint extensions, unitary one-parameter groups, spectral pairs, boundary values, reproducing kernel Hilbert spaces, scattering theory, locally compact Abelian groups, Fourier analysis.}

\begin{abstract}
We study spectral theory for bounded Borel subsets of $\br$ and in particular finite unions of intervals. For Hilbert space, we take $L^2$ of the union of the intervals. This yields a boundary value problem  arising from the minimal operator  $\Ds = \frac1{2\pi i}\frac{d}{dx}$  with domain consisting of $C^\infty$ functions vanishing at the endpoints. We offer a detailed interplay between geometric configurations of unions of intervals and a spectral theory for the corresponding self-adjoint extensions of $\Ds$ and for the associated unitary groups of local translations.
While motivated by scattering theory and quantum graphs, our present focus is on the Fuglede-spectral pair problem. Stated more generally, this problem asks for a determination of those bounded Borel sets $\Omega$ in $\br^k$  such that $L^2(\Omega)$ has an orthogonal basis of Fourier frequencies (spectrum), i.e., a total set of orthogonal complex exponentials restricted to $\Omega$.
 In the general case, we characterize Borel sets $\Omega$ having this spectral property in terms of a unitary representation of $(\br, +)$ acting by local translations.  The case of $k = 1$ is of special interest, hence the interval-configurations. We give a characterization of those geometric interval-configurations which allow Fourier spectra directly in terms of the self-adjoint extensions of the minimal operator $\Ds$. This allows for a direct and explicit interplay between geometry and spectra.
\end{abstract}
\maketitle \tableofcontents

\section{Introduction}

\newcommand{\Dmax}{\mathcal D_{\operatorname*{max}}}
\newcommand{\Dmin}{\Ds_{\operatorname*{min}}}

In this paper we study a classification problem (SAE) for self-adjoint extensions of Hermitian operators in Hilbert space, with dense domain and finite deficiency indices $(n, n)$. While this question has many ramifications, we will focus here on a restricted family of extension operators. We begin with a justification for the restricted focus.
\medskip

{\bf Definitions.} It turns out that this problem arises in a number of instances which on the face of it appear quite different, but turn out to be unitarily equivalent. While we have in mind models for scattering of waves on a disconnected obstacle, and quantum mechanical transition probabilities, it will be convenient for us to select the version of problem (SAE) where $\H = L^2(\Omega)$  and $\Omega$ is a bounded open subset of the real line  $\br$ with a finite number of components, i.e., $\Omega$ is a finite union of open disjoint intervals. We consider $\Ds := \frac{1}{2\pi i}\frac{d}{dx}$  corresponding to vanishing boundary conditions (the minimal operator). Then the deficiency indices are $(n, n)$ when $n$ is the number of components in $\Omega$. In a different context, mathematical physics, the minimal operator was considered in \cite{Jor81}.

Let $\H$ be a complex Hilbert space with inner product $\ip{\cdot}{\cdot}=\ip{\cdot}{\cdot}_{\H}$. Let $\mathcal D\subset\H$ be a dense subspace in $\H$. A linear operator $L$ defined on $\mathcal D$ is said to be {\it symmetric} (or {\it Hermitian}) iff 
$$\ip{Lf}{g}=\ip{f}{Lg}\mbox{ for all }f,g\in\mathcal D.$$
In this case, the adjoint operator $L^*$ is defined on a subspace $\domain (L^*)$ containing $\mathcal D$ and $L\subset L^*$, where "$\subset$" refers to containment of graphs.

If the dimensions of the two eigenspaces $\{f_\pm\in\domain(L^*) : L^*f_\pm=\pm if_\pm\}$ are equal (called {\it the deficiency indices }) then $L$ has self-adjoint extensions. Every self-adjoint extension $A$ of $L$ must satisfy $L\subset A\subset L^*$ and any such $A$ will be a restriction of $L^*$.

     In section 3, we offer a geometric model for the study of finite deficiency indices $(n, n)$. While this work is directly related to recent work \cite{JPTa, JPTb, JPTc}, our present focus is different, as are our themes. To make our present paper reasonably self-contained, it will be convenient for us to include here (section 3) some basic lemmas needed in the proof of our main theorems. When $n$ (the number of intervals in $\Omega$) is fixed, the set of all self-adjoint extensions of $\Ds$  is in bijective correspondence with the group $U_n$  of all complex unitary $n \times n$ matrices. Moreover we include in Proposition \ref{pr1.11} an explicit formula for our $U_n$ correspondence for the problem (SAE), expressed directly in terms of the  $2n$ interval-endpoints constituting the boundary of $\Omega$. It is an action by elements in the conformal group $U(n, n)$.

 {\bf Motivation.}     One motivation for our study is a spectral theoretic question (conjecture) raised first in a paper by Fuglede \cite{Fug74}. We refer to \cite{Fug74} and  \cite{JoPe96} for details, but, in summary, the question is whether the existence of a Fourier basis in $L^2(\Omega)$ is equivalent to the set $\Omega$ possibly tiling $\br^k$ by a set of translation vectors. Here the question addresses any dimension $k$, and for any Borel set $\Omega$ of finite positive Lebesgue measure. If there is a subset $\Lambda$ in $\br^k$ such that the complex $\Lambda$ exponentials form an orthogonal basis in $L^2(\Omega)$ we say that $(\Omega, \Lambda)$ is a spectral pair, and we say that the set $\Omega$ is spectral.

      There are a number of reasons for restricting to a one-dimensional model.

       In application to the Lax-Phillips model for scattering of acoustic waves on a bounded and disconnected obstacle in $\br^k$,  $k > 1$ (see e.g., \cite{PWW87}), the present case of one dimension will then represent a wave motion in a single direction in $\br^k$.  In the Lax-Phillips model for obstacle scattering, time-evolution of waves is represented by a unitary one-parameter group of operators acting on an associated energy Hilbert space $\H$.
       
       \medskip
       Below we outline briefly two reasons why our results about intervals are relevant to Lax-Phillips scattering theory \cite{LaPh89}.  Recall that Lax-Phillips scattering theory deals with scattering of acoustic waves around solid obstacles in $\br^k$, i.e., the solution to some wave equation, represented in a Hilbert space, and the study of wave solutions in the complement of a compact obstacle, e.g., for the wave equation in an exterior domain.
  This study of the exterior of a bounded obstacle in turn is divided into the case of a connected obstacle, vs the case of disconnected ones. The latter is more subtle because it yields intriguing configurations of bound-states, i.e., the trapping of waves in the bounded connected components in the complement obstacle: in mathematical language, eigenvectors and eigenvalues.
  Now working directly in $\br^k$  and in components in the complement of a bounded obstacle is typically difficult, both in the theory and in applications. One way around this difficulty goes via hyperplane segments in the obstacle, e.g., the use of a suitable Radon transform \cite{Hel11}. But rather, our present approach is to study instead waves traveling along varying linear directions in $\br^k$. These directions in turn are specified by lines passing through the obstacles.

  With disconnected obstacles in $\br^k$, the linear intersections will then be the union of a number of bounded intervals. Using a second fact from Lax-Phillips scattering theory  \cite{LaPh89}, we note that the solutions to the acoustic wave equations may be represented by a strongly continuous unitary one-parameter group, say $U(t)$ acting on an energy Hilbert space. But Lax and Phillips \cite{LaPh89} show that this group $U(t)$ may be taken to be unitarily equivalent to a translation representation. Hence, for each of the one-dimensional linear directions, we get an equivalent translation group generated by a self-adjoint operator arising in a one-dimensional boundary value problem; and hence we are led to self-adjoint extensions of a minimal derivative $\frac1{2\pi i}\frac d{dx}$ in some linear set $\Omega$ that is the union of a finite number of disjoint open intervals. Note that $\Omega$ will vary with the choice of different linear directions through some fixed disconnected obstacle.

  Starting with one fixed such set $\Omega$, we study the question of self-adjoint extensions. While our motivation derives from the problem of spectral pairs, the problem is of independent interest in scattering theory. And even in the study of possible spectral sets $\Omega$, one must consider the variety of all self-adjoint extensions, although only a few of these extensions, if any, yield spectral pairs. If some $\Omega$ is spectral, we show that as the spectrum part $\Lambda$ in a corresponding spectral pair, one may take for $\Lambda$ the spectrum of some self-adjoint extension. Only with hindsight one realizes that in fact ``most'' self-adjoint extensions are not spectral. Identifying those that are is a subtle problem.

\medskip 
 Our main results are in sections 2 through 4, and we give an application in section 5.
 Some highpoints: In Theorem \ref{th4.1} we show that every spectral pair $(\Omega, \Lambda)$, with $\Omega$ a bounded Borel set of positive Lebesgue measure, has a group of local translations. In Theorem \ref{pr4.8} we apply this to the case when $\Omega$ has a spectrum with period $p$, where $p$ is a fixed positive integer.
In Sections 3 and 4 we turn to the cases when $\Omega$ is assumed a union of a finite number of non-overlapping intervals. We introduce determinant-bundles, and we use them in Theorem \ref{th2.12} in order to classify the spectral types of self-adjoint extensions. This, and our local translation groups, are applied, in turn, in proving our results on spectral correspondence: In Theorem \ref{th3.9} and Corollary \ref{cor3.10} we offer a classification for the case when the spectrum has period $p$; and in Theorems \ref{th3.14} and \ref{th4.10} we further study the correspondence between two sets making up a spectral pair. 

We view the approach to spectral pairs via self-adjoint extensions as a tool for generating spectra. In fact, in the literature, so far there are rather few analytic and constructive tools available helping one produce spectral pairs. We present an analysis of self-adjoint extension as one such tool. Our purpose is to develop these self-adjoint extensions, refine them, and apply this to the spectral pair question. But these other applications to scattering theory are of general interest in mathematical physics.

  As for spectral pairs, we find that among all the self-adjoint extensions only a very small subset is spectral. And of course, for many cases of a linear set $\Omega$, this (spectral) subset may be empty.
But even if some $\Omega$ is not spectral, we have a detailed geometric configuration of self-adjoint extensions with associated spectra. We also study these, their nature and geometry.

       Now, Fuglede's conjecture is known to be negative when the dimension $k$ is 3 or more \cite{Tao04, KM06} , but the cases of $k = 1$ and $k = 2$  are still open. But in the plane ($k = 2$) the partial derivatives corresponding to zero boundary conditions for a fixed open planar $\Omega$  are known to have deficiency indices $(\infty,\infty)$. Moreover the geometric issues involved for the two cases $k = 1$ and $k = 2$ are quite different, and we thus focus here on $k = 1$.

We prove in section 3 the following theorem: Given a finite number  $n > 1$ of components in some fixed $\Omega$, and a pair, consisting of a matrix $B$ and $2n$ boundary points; i.e., given $B$ in $U_n$, and $2n$  interval endpoints (the boundary of $\Omega$), by passing to the corresponding self-adjoint extension $A_B$  in $L^2(\Omega)$ we get a spectral pair ($\Omega$, spectrum($A_B$))  if and only the matrix $B$ has a certain factorization in terms of two unitary matrices, one formed from the left-hand side interval endpoints, and the other from the right-hand side interval endpoints.

      There is a recent substantial prior literature on orthogonal Fourier exponentials and spectral duality, see for example \cite{Fug74, DuJo09a, DuJo09b, DuJo08a, DuJo07a, DuJo07b, JoPe98, JoPe96, JoPe99, LaSh92, IoPe98, IKT01, Tao04, KM06,MR2817339,JPTc,JPTd}.

\begin{definition}\label{def0.1}
For $\lambda\in\br$, we denote by $e_\lambda(t)=e^{2\pi i \lambda t}$, $t\in\br$. 
A Borel subset $\Omega$ of finite Lebesgue measure is said to be {\it spectral} if there is a set $\Lambda$ in $\br$ such that the family of exponential functions $\left\{\frac{1}{\sqrt{|\Omega|}}e_\lambda :\lambda\in\Lambda\right\}$ is an orthonormal basis for $L^2(\Omega)$. In this case, $\Lambda$ is called a {\it spectrum} for $\Omega$ and $(\Omega,\Lambda)$ is called a {\it spectral pair}.

$|\Omega|$ indicates the Lebesgue measure of $\Omega$.

A finite Borel measure $\mu$ on $\br$ is called {\it spectral} if there exists a set $\Lambda$ in $\br$ such that $\{e_\lambda : \lambda\in\Lambda\}$ is an orthogonal basis for $L^2(\mu)$. We call $\Lambda$ a {\it spectrum} for $\mu$. 

A finite set $A$ in $\br$ is spectral if the atomic measure $\frac{1}{|A|}\sum_{a\in A}\delta_a$ is spectral.

A Borel subset $\Omega$ of $\br$ {\it tiles} $\br$ by translations if there exists a subset $\mathcal T$ of $\br$ such that $(\Omega+t)_{t\in\mathcal T}$ forms a partition of $\br$, up to Lebesgue measure zero.
\end{definition}

{\bf Main results and the structure of the paper.}   In section 2, we turn to the study of general spectral subsets $\Omega$ of the line, i.e., $\br^k$ for $k = 1$. In Theorem \ref{th4.1}, we prove that if some set $\Omega$ is a first part in a spectral pair then there is a canonically associated unitary one-parameter group $U(t)$ consisting of local translations in $L^2(\Omega)$. If $\Omega$ is further assumed open, then this becomes a statement about the infinitesimal generator of  $U(t)$ as a self-adjoint extension of the minimal operator for $\Omega$.
   
Section 2 contains a number of additional detailed results. We highlight the following: in Theorem \ref{pr4.8}, for the general case of linear spectral sets $\Omega$, we offer a geometric representation of the associated unitary one-parameter group $U(t)$ of local translations in $L^2(\Omega)$: We show that this one-parameter group  $U(t)$  is unitarily equivalent to an induced representation of $(\br, +)$ in the sense of Mackey \cite{Ma61}.

In section 3, we turn to a detailed analysis of the set of all self-adjoint extensions of the minimal operator for a fixed bounded open linear set $\Omega$ written as a union of a finite number of components, see Definition \ref{def1.1}. We show in Theorem \ref{th1.7} that self-adjoint extensions of the minimal operator $\Ds$ correspond to unitary $n\times n$ matrices $B$. This follows the same pattern as the ones in \cite{JPTa,JPTb,JPTc}, but here the intervals are all finite. In addition, in Proposition \ref{pr1.10} we describe the reproducing kernel Hilbert space structure for the graph-inner product and offer formulas for the kernel functions. In Theorem \ref{th2.12} we describe the spectral decomposition of the self-adjoint extensions in terms of the unitary matrix $B$.

    Section 4 deals with  spectral sets which are finite unions of intervals. In Theorem \ref{th3.1} we show that some finite union of intervals  $\Omega$ in $\br$ is spectral if and only if there is a strongly continuous unitary one-parameter group $U(t)$ acting in the Hilbert space $L^2(\Omega)$ by pointwise translation inside $\Omega$, i.e., sending points $x$ in $\Omega$ to $x + t$  whenever both are in $\Omega$. This extends, in dimension one, results from \cite{Fug74,Ped87} (the results due to Fuglede and Pedersen are formulated in terms of the ``integrability property'' for $\Omega$, which is similar to our local translation property but the translations are made only with small enough numbers; also the {\it equivalence} between the spectral property and the integrability property is true only for connected sets).

In Theorem \ref{th3.4} and Corollary \ref{cor3.6}, for a given $\Omega$, assumed spectral, we characterize those self-adjoint extensions of the minimal operator which correspond to spectral pairs $(\Omega, \Lambda)$ and moreover, we give a formula for the corresponding spectrum $\Lambda$.
Our Theorems \ref{th3.9}, \ref{th3.14}, and \ref{th4.10} together offer a geometric properties of spectral sets $\Omega$.

  Finally in section 5, as an illustrating example, we specialize to the case when $\Omega$ is the disjoint union of two disjoint open intervals. While this may appear overly specialized, we stress that it is of significance in the above mentioned applications to Lax-Phillips scattering theory. Part of these results can be found in \cite{La01,JPTb}, but we include here more detailed description including one for the associated groups of local translations.

\section{General spectral subsets $\Omega$ of $\br$}

In this section we study the case of spectral pairs $(\Omega, \Lambda)$ for bounded Borel subsets of $\br$; a key ingredient is a result from \cite{BoMa11,IoKo12} that the spectrum $\Lambda$ has a finite period. But first we show that the spectral property implies the existence of a certain unitary group of local translations and we give a detailed description of this unitary group and various equivalent forms.

\begin{definition}\label{def3.3} 
Let $\Omega$ be a bounded Borel subset of $\br$. 
A {\it unitary group of local translations} on $\Omega$ is a strongly continuous one parameter unitary group $U(t)$ on $L^2(\Omega)$ with the property that for any $f\in L^2(\Omega)$ and any $t\in\br$,
\begin{equation}
(U(t)f)(x)=f(x+t)\mbox{ for a.e }x\in \Omega\cap(\Omega-t)
\label{eq3.3.1}
\end{equation}

If $\Omega$ is spectral with spectrum $\Lambda$, we define the Fourier transform $\mathcal F:L^2(\Omega)\rightarrow l^2(\Lambda)$ 
\begin{equation}\label{eq3.3.2}
\F f=\left(\ip{f}{\frac{1}{\sqrt{|\Omega|}}e_\lambda}\right)_{\lambda\in\Lambda},\quad(f\in L^2(\Omega)).
\end{equation}
We define the unitary group of local translations associated to $\Lambda$ by 
\begin{equation}
U_\Lambda(t)=\F^{-1}\hat U_\Lambda(t)\F\mbox{ where }\hat U_\Lambda(t)(a_\lambda)=(e^{2\pi i\lambda t}a_\lambda),\quad ((a_\lambda)\in l^2(\Lambda).
\label{eq3.3.3}
\end{equation}
\end{definition}

\begin{theorem}\label{th4.1}
Let $\Omega$ be a bounded Borel subset of $\br$. Assume that $\Omega$ is spectral with spectrum $\Lambda$. Let $U_\Lambda$ be the associated unitary group as in \eqref{eq3.3.3}. Then $U:=U_\Lambda$ is a unitary group of local translations.

\end{theorem}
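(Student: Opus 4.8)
The plan is to reduce everything to a one-line computation on the spectral exponentials $e_\mu$, $\mu\in\Lambda$, followed by a density argument. First I would record that, since $\Lambda$ is a spectrum for $\Omega$, the normalized family $\{|\Omega|^{-1/2}e_\lambda\}_{\lambda\in\Lambda}$ is an orthonormal basis of $L^2(\Omega)$, so the Fourier transform $\F$ of \eqref{eq3.3.2} is a unitary isomorphism onto $l^2(\Lambda)$. The diagonal operators $\hat U_\Lambda(t)$ multiply each coordinate by the unimodular scalar $e^{2\pi i\lambda t}$, hence form a one-parameter unitary group on $l^2(\Lambda)$, and strong continuity follows by dominated convergence (each coordinate difference tends to $0$ and is dominated by $4|a_\lambda|^2$). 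Conjugating by the unitary $\F$ then transfers these properties verbatim, so $U=U_\Lambda$ is automatically a strongly continuous one-parameter unitary group; only the local translation property \eqref{eq3.3.1} requires genuine work.

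For that, I would compute the action on a single spectral exponential. Orthogonality of the $e_\lambda$ on $\Omega$ gives $\F e_\mu=\sqrt{|\Omega|}\,\delta_\mu$, where $\delta_\mu$ is the standard basis vector at $\mu$, whence
\[
U(t)e_\mu=\F^{-1}\!\left(e^{2\pi i\mu t}\sqrt{|\Omega|}\,\delta_\mu\right)=e^{2\pi i\mu t}e_\mu.
\]
The crucial observation is that $e^{2\pi i\mu t}e_\mu(x)=e_\mu(x+t)$ identically in $x$, so on the exponentials $U(t)$ coincides with honest global translation by $t$. By linearity the same holds for every finite linear combination $g=\sum_\mu c_\mu e_\mu$: one has $(U(t)g)(x)=g(x+t)$ for all $x$, and in particular for a.e.\ $x\in\Omega\cap(\Omega-t)$.

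The final step is to pass from such $g$ to arbitrary $f\in L^2(\Omega)$. I would choose finite combinations $g_n$ of the $e_\mu$ with $g_n\to f$ in $L^2(\Omega)$; boundedness of $U(t)$ gives $U(t)g_n\to U(t)f$ in $L^2(\Omega)$, hence after restriction in $L^2(\Omega\cap(\Omega-t))$. On the other hand, the substitution $y=x+t$ carries $\Omega\cap(\Omega-t)$ onto $\Omega\cap(\Omega+t)\subseteq\Omega$ and is measure-preserving, so
\[
\int_{\Omega\cap(\Omega-t)}|g_n(x+t)-f(x+t)|^2\,dx=\int_{\Omega\cap(\Omega+t)}|g_n-f|^2\le\norm{g_n-f}_{L^2(\Omega)}^2\to 0,
\]
i.e.\ $g_n(\cdot+t)\to f(\cdot+t)$ in $L^2(\Omega\cap(\Omega-t))$. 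Since $(U(t)g_n)(x)=g_n(x+t)$ a.e.\ on $\Omega\cap(\Omega-t)$, uniqueness of $L^2$-limits forces $(U(t)f)(x)=f(x+t)$ for a.e.\ $x\in\Omega\cap(\Omega-t)$, which is precisely \eqref{eq3.3.1}.

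The one delicate point is this last limiting argument: the translate $f(\cdot+t)$ need not lie in $L^2(\Omega)$ and is only meaningful on the overlap $\Omega\cap(\Omega-t)$, so the approximation must be controlled there rather than on all of $\Omega$. The measure-preserving change of variables is exactly what bounds the translated approximants on that overlap, and I expect it to be the only place requiring care; everything else is either the defining orthogonality of a spectrum or the routine transfer of unitary-group properties across the intertwining unitary $\F$.
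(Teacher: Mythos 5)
Your proposal is correct and follows the paper's own strategy: first verify the local-translation identity \eqref{eq3.3.1} exactly on the spectral exponentials $e_\lambda$ (where it holds everywhere), then extend to all of $L^2(\Omega)$ by density of their finite linear combinations. The only difference is in how the limit is closed: the paper extracts a.e.-convergent subsequences and works off the null set $E\cup(E-t)\cup F$, whereas you identify the two limits in $L^2(\Omega\cap(\Omega-t))$ via the measure-preserving substitution $y=x+t$ and uniqueness of $L^2$-limits --- a slightly cleaner rendering of the same step (and you also spell out the strong continuity of $\hat U_\Lambda(t)$, which the paper leaves implicit in Definition \ref{def3.3}).
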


\begin{proof}

 We will show that \eqref{eq3.3.1} holds. First note that $U(t)e_\lambda=e^{2\pi i\lambda t}e_\lambda$ for $\lambda\in\Lambda$ and $t\in\br$. So for $t\in\br$ and $x\in\Omega\cap(\Omega-t)$ we have

$$(U(t)e_\lambda)(x)=e^{2\pi i\lambda t}e^{2\pi i \lambda x}=e^{2\pi \lambda(x+t)}=e_\lambda(x+t),$$
hence \eqref{eq3.3.1} holds {\it everywhere} for $e_\lambda$.

 Let $f\in L^2(\Omega)$. Since the set $\{e_\lambda: \lambda\in\Lambda\}$ is an orthogonal basis for $L^2(\Omega)$, we can approximate $f$ in $L^2(\Omega)$ by a sequence of functions $f_n$ which are finite linear combinations of 
the functions $e_\lambda$, $\lambda\in\Lambda$. By passing to a subsequence we can also assume that $f_n(x)$ converges to $f(x)$ for $x\in\Omega\setminus E$ where $E$ has Lebesgue measure zero. Since $U(t)$ is unitary, we have that $U(t)f_n$ converges to $U(t)f$, and again by passing to a subsequence we can assume that $(U(t)f_n)(x)$ converges to $(U(t)f)(x)$ for $x\in\Omega\setminus F$ where $F$ has Lebesgue measure zero. 

The set $E\cup (E-t)$ has Lebesgue measure zero. Take $x\in (\Omega\cap(\Omega-t))\setminus(E\cup (E-t)\cup F)$. We have $f_n(x)\rightarrow f(x)$ and $f_n(x+t)\rightarrow f(x+t)$. Also 
$$(U(t)f_n)(x)=f_n(x+t)\rightarrow f(x+t)$$
At the same time $(U(t)f_n)(x)$ converges to $(U(t)f)(x)$. Thus we have $(U(t)f)(x)=f(x+t)$ for $x\in(\Omega\cap(\Omega-t))\setminus(E\cup (E-t)\cup F)$.

\end{proof}

Our next goal is to give a more precise description of the group of local translations associated to a spectrum. One of the main ingredients that we will use is the fact that any spectrum is {\it periodic} (see \cite{BoMa11,IoKo12}).

\begin{definition}\label{def3.8}
If $\Omega$, is spectral then any spectrum $\Lambda$ is {\it periodic} with some period $p\neq0$, i.e., $\Lambda+p=\Lambda$, and $p$ is an integer multiple of $\frac{1}{|\Omega|}$. We call $p$ a period for $\Lambda$. If $p=\frac{k(p)}{|\Omega|}$ with $k(p)\in\bn$, then $\Lambda$ has the form

\begin{equation}
\Lambda=\{\lambda_0,\dots,\lambda_{k(p)-1}\}+p\bz,
\label{eq3.8.1}
\end{equation}  
with $\lambda_0,\dots,\lambda_{k(p)-1}\in[0,p)$, see \cite{BoMa11,IoKo12}. The reason that there are $k(p)$ elements of $\Lambda$ in the interval $[0,p)$ can be seen also from the fact that the Beurling density of a spectrum $\Lambda$ has to be ${|\Omega|}$, see \cite{Lan67a}.

\end{definition}

\begin{remark}\label{rem4.1}
According to \cite{IoKo12}, if $\Omega$ has Lebesgue measure 1 and is spectral with spectrum $\Lambda$, with $0\in\Lambda$, then $\Lambda$ is periodic, the period $p$ is an integer and $\Lambda$ has the form
\begin{equation}
\Lambda=\{\lambda_0=0,\lambda_1,\dots,\lambda_{p-1}\}+p\bz,
\label{eq4.1}
\end{equation}
where $\lambda_i$ in $[0,p)$ are some distinct real numbers.
\end{remark}

We recall a few lemmas and propositions (see \cite{DuJo12b} and the references therein) that exploit the periodicity of the spectrum to give some information about the structure of $\Omega$. 
\begin{proposition}\label{pr3.9}
If $\Omega=\cup_{i=1}^n(\alpha_i,\beta_i)$, $\alpha_1<\beta_1<\alpha_2<\beta_2<\dots<\alpha_n<\beta_n$ is spectral, $p\in\bn$ and $\alpha_i,\beta_i\in\frac1p\bz$ for all $i=1,\dots,n$, then any spectrum $\Lambda$ for $\Omega$ has $p$ as a period. 
\end{proposition}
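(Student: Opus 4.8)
The plan is to reduce everything to a short Fourier computation based on the cell structure that the hypothesis $\alpha_i,\beta_i\in\frac1p\bz$ forces on $\Omega$. First I would observe that, up to a Lebesgue-null set, $\Omega$ is a disjoint union of the elementary cells $[j/p,(j+1)/p)$, $j\in S$, for a finite set $S\subset\bz$ with $|S|=p|\Omega|$; hence $\chi_\Omega=\sum_{j\in S}\chi_{[j/p,(j+1)/p)}$ a.e. Taking Fourier transforms $\widehat{g}(\xi)=\int g(x)e^{-2\pi i\xi x}\,dx$, this factors as $\widehat{\chi_\Omega}(\xi)=m(\xi)h(\xi)$, where $h(\xi)=\int_0^{1/p}e^{-2\pi i\xi x}\,dx$ and $m(\xi)=\sum_{j\in S}e^{-2\pi i\xi j/p}$. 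The two factors have exactly the features the argument needs: $m$ is $p$-periodic (each exponent $j/p$ is unaffected by the shift $\xi\mapsto\xi+p$ since $j\in\bz$), while $h$ vanishes precisely on $p\bz\setminus\{0\}$ and satisfies $h(0)=1/p\neq0$, with $m(0)=|S|=p|\Omega|$.

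The heart of the proof is to test the orthonormal basis against a single shifted exponential. Fix $\lambda\in\Lambda$; I want $\lambda+p\in\Lambda$. Since $\{\frac{1}{\sqrt{|\Omega|}}e_\mu:\mu\in\Lambda\}$ is an orthonormal basis and $\|e_{\lambda+p}\|_{L^2(\Omega)}^2=|\Omega|$, Parseval's identity gives
\[
\sum_{\mu\in\Lambda}\left|\widehat{\chi_\Omega}(\mu-\lambda-p)\right|^2=|\Omega|^2 .
\]
Now I would use the factorization together with the $p$-periodicity of $m$ to write the $\mu$-term as $|h(\mu-\lambda-p)|^2\,|m(\mu-\lambda)|^2$, and then eliminate almost all terms. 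For $\mu\neq\lambda$, orthogonality of the basis gives $\widehat{\chi_\Omega}(\mu-\lambda)=h(\mu-\lambda)m(\mu-\lambda)=0$, so either $m(\mu-\lambda)=0$ (and the term drops) or $\mu-\lambda\in p\bz\setminus\{0\}$; writing $\mu-\lambda=p\ell$, the term becomes $|h(p(\ell-1))|^2|m(0)|^2$, which vanishes unless $\ell=1$. The diagonal term $\mu=\lambda$ also drops, since its factor is $|h(-p)|^2=0$. Hence the whole sum reduces to the single term $\mu=\lambda+p$, whose value, when present, is $|h(0)|^2|m(0)|^2=(1/p)^2(p|\Omega|)^2=|\Omega|^2$.

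Since the sum must equal $|\Omega|^2\neq0$, the term $\mu=\lambda+p$ has to actually occur, i.e. $\lambda+p\in\Lambda$; thus $\Lambda+p\subseteq\Lambda$. Running the identical computation with $-p$ in place of $p$ (the factorization and all the vanishing properties are symmetric in sign, as $|h|$ and $|m|$ are even) yields $\Lambda-p\subseteq\Lambda$, and the two inclusions give $\Lambda+p=\Lambda$, so $p$ is a period. I expect the only delicate point to be the bookkeeping in the second paragraph: the argument hinges on the coincidence that $h$ kills \emph{both} the diagonal shift $-p$ and every nonzero multiple of $p$ except the single translate $+p$, so that the $p$-periodicity of $m$ transports exactly one basis vector onto $e_{\lambda+p}$. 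Notably, this route is self-contained and does not even invoke the periodicity result of \cite{BoMa11,IoKo12}.
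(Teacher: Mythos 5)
Your proof is correct; I checked each step. The cell decomposition is legitimate (each $(\alpha_i,\beta_i)$ with endpoints in $\frac1p\mathbb{Z}$ is, up to a null set, a disjoint union of cells $[j/p,(j+1)/p)$, and $|S|=p|\Omega|$), the factorization $\widehat{\chi_\Omega}=m\cdot h$ with $m$ $p$-periodic and $h$ vanishing exactly on $p\mathbb{Z}\setminus\{0\}$ is right, the Parseval identity $\sum_{\mu\in\Lambda}|\widehat{\chi_\Omega}(\mu-\lambda-p)|^2=|\Omega|^2$ carries the correct normalization, and the case analysis is airtight: for $\mu\neq\lambda$ orthogonality forces $m(\mu-\lambda)h(\mu-\lambda)=0$, and if $m(\mu-\lambda)\neq 0$ then $\mu-\lambda=p\ell$ with $\ell\neq 0$, whereupon $|h(p(\ell-1))|^2$ kills every term except $\ell=1$; the diagonal term dies because $h(-p)=0$; and the surviving term $\mu=\lambda+p$ contributes exactly $|h(0)m(0)|^2=|\Omega|^2$, so it must be present. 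The $-p$ run then upgrades $\Lambda+p\subseteq\Lambda$ to $\Lambda+p=\Lambda$. Note that this is genuinely different from what the paper does: the paper offers no proof at all of Proposition \ref{pr3.9}, recalling it from \cite{DuJo12b}, and the surrounding discussion leans on the deep periodicity theorem of \cite{BoMa11,IoKo12} (every spectrum of a bounded spectral set is periodic with period in $\frac{1}{|\Omega|}\mathbb{Z}$). Your argument is self-contained and elementary --- a single Parseval computation plus the zero-set structure of $h$ --- and it proves directly that \emph{every} spectrum has $p$ as a period without first knowing that spectra are periodic at all; what the citation route buys instead is generality (periodicity for arbitrary bounded Borel spectral sets, where no such cell factorization of $\widehat{\chi_\Omega}$ is available), which the paper needs elsewhere, e.g.\ in Definition \ref{def3.8} and Theorem \ref{pr4.8}. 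One small presentational remark: you may want to state explicitly that $\langle e_{\lambda+p},e_\mu\rangle_{L^2(\Omega)}=\widehat{\chi_\Omega}(\mu-\lambda-p)$ with your sign convention, since the reduction of orthogonality to $\widehat{\chi_\Omega}(\mu-\lambda)=0$ uses that $\chi_\Omega$ is real, so $|\widehat{\chi_\Omega}|$ is even; but this is cosmetic, not a gap.
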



\begin{corollary}\label{cor3.9}
If $\Omega=\cup_{i=1}^n(\alpha_i,\beta_i)$, $\alpha_1<\beta_1<\alpha_2<\beta_2<\dots<\alpha_n<\beta_n$ is spectral and $\alpha_i,\beta_i\in\bz$ for all $i=1,\dots,n$ and if a spectrum $\Lambda$ has period $\frac{k}{|\Omega|}$ then $k$ divides $|\Omega|$. 
\end{corollary}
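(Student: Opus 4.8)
The plan is to combine Proposition~\ref{pr3.9}, applied with the trivial denominator $p=1$, with the observation that the periods of a spectrum form a discrete cyclic group, and then to extract the divisibility from the minimality of $\frac{k}{|\Omega|}$ as a period. First I would record the elementary fact that, since all endpoints $\alpha_i,\beta_i$ lie in $\bz$, the measure $|\Omega|=\sum_{i=1}^n(\beta_i-\alpha_i)$ is a positive integer; set $m:=|\Omega|$. Because $\alpha_i,\beta_i\in\bz=\frac11\bz$, Proposition~\ref{pr3.9} applies with $p=1$ and yields that $1$ is a period of the spectrum $\Lambda$, that is $\Lambda+1=\Lambda$.

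The structural heart of the argument is to identify the full group of periods of $\Lambda$. By Definition~\ref{def3.8} every period of $\Lambda$ is an integer multiple of $\frac1m$, so the set $P:=\{t\in\br:\Lambda+t=\Lambda\}$ is a subgroup of $(\br,+)$ contained in the discrete group $\frac1m\bz$. A subgroup of $\frac1m\bz\cong\bz$ is cyclic, and since $1\in P$ it is nontrivial; hence $P=p_0\bz$ for its least positive element $p_0$, which therefore divides every period. Reading $\frac{k}{m}$ as the minimal period then gives $p_0=\frac{k}{m}$.

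Finally, since $1\in P=\frac{k}{m}\bz$, there is an integer $N\in\bn$ with $1=N\cdot\frac{k}{m}$, that is $Nk=m=|\Omega|$, so $k$ divides $|\Omega|$. I expect the one genuinely delicate point — and hence the main obstacle — to be the reading of the word ``period'': the desired conclusion $k\mid|\Omega|$ is equivalent to $\frac{|\Omega|}{k}=\frac1{p_0}\in\bn$, i.e.\ to $\frac{k}{|\Omega|}$ dividing $1$, and this can fail for a non-minimal period. It is precisely Proposition~\ref{pr3.9} (guaranteeing $1\in P$) together with the minimality of $p_0=\frac{k}{|\Omega|}$ that forces the divisibility, so the statement must be understood with $\frac{k}{|\Omega|}$ the generator of $P$; verifying that this is the intended meaning is the crux rather than any computation.
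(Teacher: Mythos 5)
Your argument is correct, and note at the outset that there is nothing in the paper to compare it against: Proposition \ref{pr3.9} and Corollary \ref{cor3.9} are recalled from \cite{DuJo12b} without proof, so you have supplied the missing argument rather than paralleled an existing one. Your route (Proposition \ref{pr3.9} with $p=1$ to get $1\in P$, then the cyclic structure of the period group $P$) is the natural one. Your interpretive remark is not a pedantic aside but genuinely necessary: under the literal reading in which $\frac{k}{|\Omega|}$ is merely \emph{some} period, the statement is false. Take $\Omega=(0,1)$ and $\Lambda=\bz$; then $|\Omega|=1$ and $2=\frac{2}{|\Omega|}$ is a period of $\Lambda$, yet $2\nmid 1$. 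So $\frac{k}{|\Omega|}$ must be read as the minimal period, i.e.\ the generator of $P$, exactly as you conclude, and then your chain $1\in P=p_0\bz$, $p_0=\frac{k}{m}$, $1=N\cdot\frac{k}{m}$, $Nk=m$ settles the claim.

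One small gap you should patch explicitly: you write that ``by Definition \ref{def3.8} every period of $\Lambda$ is an integer multiple of $\frac1m$,'' but Definition \ref{def3.8} literally asserts membership in $\frac{1}{|\Omega|}\bz$ only for \emph{some} period $p$. The stronger statement you need (that $P\subseteq\frac1m\bz$, which is what makes $P$ discrete and hence cyclic) is true, but deserves a line of justification: if $\Lambda+q=\Lambda$ with $q>0$, then $\Lambda=(\Lambda\cap[0,q))+q\bz$, and since the Beurling density of a spectrum must equal $|\Omega|$ (as the paper notes in Definition \ref{def3.8}, citing \cite{Lan67a}), one gets $\#\left(\Lambda\cap[0,q)\right)=q\,|\Omega|$, forcing $q\in\frac{1}{|\Omega|}\bz$. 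With that density argument inserted, every step of your proof goes through.
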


%
\begin{proposition}\label{pr4.15}
Let $\Omega$ be a bounded Borel set of measure 1. Assume that $\Omega$ is spectral with spectrum $\Lambda$, $0\in\Lambda$, which has period $p$. Then $\Omega$ is a $p$-tile of $\br$ by $\frac{1}{p}\bz$-translations, i.e., for almost every $x\in\br$, there exist exactly $p$ integers $j_1,\dots, j_{p}$ such that $x$ is in $\Omega+\frac{j_i}{p}$, $i=1,\dots,p$.

Also, for a.e. $x$ in $\Omega$, there are exactly $k$ integers $j_1,\dots,j_{p}$ such that $x+\frac{j_i}{p}$ is in $\Omega$ for all $i=1,\dots,p$.  

\end{proposition}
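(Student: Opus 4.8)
The plan is to reduce both assertions to a single statement about the $\tfrac1p$-periodic counting function
\[
T(x) := \sum_{j\in\bz} 1_\Omega\!\left(x - \tfrac{j}{p}\right), \qquad x \in \br .
\]
First I would record what the hypotheses give. Since $|\Omega| = 1$, Definition~\ref{def3.8} forces $p$ to be a positive integer, and because $0 \in \Lambda$ together with $\Lambda + p = \Lambda$ we get the full lattice $p\bz \subseteq \Lambda$. As $\Omega$ is bounded, for each fixed $x$ only finitely many translates $\Omega + \tfrac{j}{p}$ contain $x$, so $T$ is finite everywhere, bounded, $\tfrac1p$-periodic, and locally integrable; by construction $T(x)$ is exactly the number of integers $j$ with $x \in \Omega + \tfrac{j}{p}$. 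Thus the first assertion is precisely the claim that $T \equiv p$ almost everywhere.

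Next I would compute the Fourier coefficients of $T$ regarded as a function of period $\tfrac1p$. Writing $\widehat{g}(\xi) = \int_\br g(x)\, e^{-2\pi i \xi x}\,dx$ and using that $e^{-2\pi i pk x}$ is invariant under the shifts $x \mapsto x - \tfrac{j}{p}$ (because $kj \in \bz$), an interchange of sum and integral collapses the periodized sum into a single integral of $1_\Omega$ and yields
\[
c_k = p\int_0^{1/p} T(x)\, e^{-2\pi i pk x}\,dx = p\,\widehat{1_\Omega}(pk), \qquad k \in \bz .
\]
In particular $c_0 = p\,|\Omega| = p$. The decisive point is that the remaining coefficients vanish: for $k \neq 0$ both $e_0 \equiv 1$ and $e_{pk}$ lie in the orthonormal family $\{e_\lambda : \lambda\in\Lambda\}$ (here I use $pk \in p\bz \subseteq \Lambda$), so their $L^2(\Omega)$ inner product is zero, i.e.
\[
\widehat{1_\Omega}(pk) = \int_\Omega e^{-2\pi i pk x}\,dx = \ip{e_0}{e_{pk}}_{L^2(\Omega)} = 0 .
\]
Hence $c_k = 0$ for every $k\neq 0$, and by uniqueness of Fourier coefficients for a bounded periodic integrable function, $T = p$ almost everywhere. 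This establishes that $\Omega$ is a $p$-tile of $\br$ by $\tfrac1p\bz$-translates.

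For the second statement I would observe that the function counting, for $x\in\Omega$, the integers $j$ with $x + \tfrac{j}{p}\in\Omega$ is $S(x) := \sum_{j\in\bz} 1_\Omega(x + \tfrac{j}{p})$, and the reindexing $j \mapsto -j$ shows $S = T$. Therefore $S = p$ a.e., so for a.e.\ $x\in\Omega$ there are exactly $p$ such integers, one of which is $j=0$ since $x\in\Omega$.

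The only genuine content is the identity $c_k = p\,\widehat{1_\Omega}(pk)$ combined with the orthogonality input $\widehat{1_\Omega}(pk)=0$ for $k\neq0$; everything else—finiteness and periodicity of $T$, the legitimacy of the interchange of sum and integral, and uniqueness of Fourier series—is routine. I expect the main thing to get right is the bookkeeping in the coefficient computation, in particular the shift-invariance of $e^{-2\pi i pk x}$ under $x\mapsto x-\tfrac{j}{p}$ that converts the periodized sum into $p\,\ip{e_0}{e_{pk}}_{L^2(\Omega)}$, and confirming that the hypothesis $0\in\Lambda$ with period $p$ is exactly what places the full lattice $p\bz$ inside the spectrum.
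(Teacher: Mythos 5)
Your proof is correct, and since the paper only recalls Proposition \ref{pr4.15} without proof (citing \cite{DuJo12b}), your periodization argument --- showing the $\tfrac1p$-periodic counting function $T$ has Fourier coefficients $c_k=p\,\widehat{1_\Omega}(pk)$, which vanish for $k\neq 0$ by orthogonality of $e_0$ and $e_{pk}$ with $p\bz\subseteq\Lambda$ --- is exactly the standard route taken in that reference. Note only that the ``exactly $k$ integers'' in the second assertion is $k(p)=p|\Omega|=p$ here, which your identification $S=T$ handles correctly.
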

%
%

\begin{definition}\label{def4.16}
For $\varphi\in L^\infty(\br)$, define the multiplication operator $M(\varphi)$ on $L^2(\br)$ by $M(\varphi)f=\varphi f$, $f\in L^2(\Omega)$.
\end{definition}

\begin{lemma}\label{lem4.16}
Let $\Omega$ be a bounded Borel set of measure 1. Assume that $\Omega$ is spectral with spectrum $\Lambda$, $0\in\Lambda$ which has period $p$. Let $P(p\bz)$ be the orthogonal projection in $L^2(\Omega)$ onto the closed subspace spanned by $\{e_{kp} : k\in\bz\}$. Then, for $f\in L^2(\Omega)$,
\begin{equation}
(P(p\bz)f)(x)=\frac{1}{p}\sum_{j\in\bz}f\left(x+\frac jp\right),\mbox{ for a.e. $x\in\Omega$}.
\label{eq4.16.1}
\end{equation}
(We define $f$ to be zero outside $\Omega$)

\end{lemma}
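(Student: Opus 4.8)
The plan is to identify the right-hand side of \eqref{eq4.16.1} with the orthogonal expansion of $P(p\bz)f$ in the orthonormal system $\{e_{kp}:k\in\bz\}$. First note that since $0\in\Lambda$ and $\Lambda+p=\Lambda$ (Remark \ref{rem4.1}), we have $p\bz\subseteq\Lambda$, so $\{e_{kp}:k\in\bz\}$ is a subfamily of the orthonormal basis $\{e_\lambda:\lambda\in\Lambda\}$ of $L^2(\Omega)$ (recall $|\Omega|=1$), hence itself orthonormal. Therefore $P(p\bz)f=\sum_{k\in\bz}\ip{f}{e_{kp}}e_{kp}$, and because $f$ is extended by zero outside $\Omega$, $\ip{f}{e_{kp}}=\int_\br f(x)e^{-2\pi ikpx}\,dx=:\widehat f(kp)$. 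It therefore suffices to show that the periodization $g(x):=\tfrac1p\sum_{j\in\bz}f(x+\tfrac jp)$ satisfies $g|_\Omega=\sum_{k\in\bz}\widehat f(kp)e_{kp}$ in $L^2(\Omega)$.

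First I would check that $g$ is a well-defined function lying in $L^2$ of one period. Since $\Omega$ is bounded, say $\Omega\subseteq[-R,R]$, for every $x$ in a fixed period interval $[0,\tfrac1p)$ only the indices $j$ in the fixed finite set $J=\{j\in\bz:[\tfrac jp,\tfrac{j+1}p)\cap\Omega\neq\ty\}$ can contribute; thus $g$ is $\tfrac1p$-periodic and $g|_{[0,1/p)}=\tfrac1p\sum_{j\in J}f(\cdot+\tfrac jp)$ is a finite sum of $L^2$ functions, so $g\in L^2([0,\tfrac1p))$. Notably, the boundedness of $\Omega$ makes every index sum genuinely finite, so no delicate convergence of the periodization is needed and Proposition \ref{pr4.15} is not required here.

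Next I would compute the Fourier coefficients of $g$ as a $\tfrac1p$-periodic function, writing $g=\sum_{k\in\bz}\gamma_k\,e_{kp}$ (as functions on $\br$) with $\gamma_k=p\int_0^{1/p}g(x)e^{-2\pi ikpx}\,dx$. Interchanging the finite sum with the integral and substituting $y=x+\tfrac jp$, and using $e^{2\pi ikj}=1$, the period integrals reassemble into one integral over $\br$:
\[
\gamma_k=\sum_{j\in J}\int_{j/p}^{(j+1)/p}f(y)e^{-2\pi ikpy}\,dy=\int_\br f(y)e^{-2\pi ikpy}\,dy=\widehat f(kp).
\]
This periodization identity is the computational heart of the argument.

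Finally I would transfer convergence from the period to $\Omega$. The partial sums $g_N:=\sum_{|k|\le N}\widehat f(kp)e_{kp}$ converge to $g$ in $L^2([0,\tfrac1p))$ by Parseval; since $g-g_N$ is $\tfrac1p$-periodic and $\Omega$ is covered by finitely many period cells, the estimate $\norm{g-g_N}^2_{L^2(\Omega)}\le\lceil 2Rp\rceil\int_0^{1/p}|g-g_N|^2$ forces $g_N|_\Omega\to g|_\Omega$ in $L^2(\Omega)$. On the other hand $g_N|_\Omega=\sum_{|k|\le N}\widehat f(kp)e_{kp}\to P(p\bz)f$ in $L^2(\Omega)$ by the first paragraph. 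Uniqueness of the limit yields $g|_\Omega=P(p\bz)f$, which is exactly \eqref{eq4.16.1}. The only points requiring care—hence the main (mild) obstacle—are the justification of the sum–integral interchange defining $\gamma_k$ and the passage from $L^2$-convergence on a single period to $L^2(\Omega)$; both are dispatched by the boundedness of $\Omega$.
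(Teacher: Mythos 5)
Your proof is correct. There is, however, no in-paper proof to compare it against: Lemma \ref{lem4.16} is stated among the results the paper explicitly recalls from \cite{DuJo12b} (``We recall a few lemmas and propositions\dots''), so the authors defer the argument to that reference. Taken on its own, your periodization argument is sound and self-contained, and it isolates exactly the two hypotheses doing the work: (a) $0\in\Lambda$ and $\Lambda+p=\Lambda$ give $p\bz\subseteq\Lambda$, so $\{e_{kp}:k\in\bz\}$ is a subfamily of the orthonormal basis $\{e_\lambda:\lambda\in\Lambda\}$ (the normalization uses $|\Omega|=1$), whence $P(p\bz)f=\sum_k\ip{f}{e_{kp}}e_{kp}$ with $\ip{f}{e_{kp}}=\widehat f(kp)$ after extending $f$ by zero; and (b) boundedness of $\Omega$, which makes the periodization $g=\frac1p\sum_j f(\cdot+\frac jp)$ a finite sum on each period cell, trivializes the sum--integral interchange in the computation $\gamma_k=\widehat f(kp)$ (your normalization $\gamma_k=p\int_0^{1/p}g\,e^{-2\pi ikpx}\,dx$ is the correct one for frequencies $kp$ on an interval of length $1/p$), and lets you upgrade $L^2$-convergence on one period to $L^2(\Omega)$-convergence because $\Omega$ meets only finitely many cells. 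Your side remark is also accurate: neither Proposition \ref{pr4.15} nor any tiling property of $\Omega$ is needed — the lemma reduces to the classical fact that the Fourier coefficients of a periodization are the samples $\widehat f(kp)$ of the Fourier transform, combined with the orthonormality of the restricted exponentials $\{e_{kp}\}$ on $\Omega$, which is precisely where the spectral hypothesis enters.
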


%
%
%
%

\begin{proposition}\label{pr4.17}
Let $\Omega$ be a bounded Borel set of measure 1. Assume that $\Omega$ is spectral with spectrum $\Lambda$, which has period $p$ and assume $0\in\Lambda$. Let $\Lambda=\{\lambda_0=0,\lambda_1,\dots,\lambda_{p-1}\}+p\bz$ with $\lambda_i\in[0,p)$, $i=0,\dots,p-1$. Then the projection $P(\lambda_i+p\bz)$ onto the span of $\{e_{\lambda_i+kp}: k\in\bz\}$ has the following formula: for $f\in L^2(\Omega)$,
\begin{equation}
(P(\lambda_i+p\bz)f)(x)=e_{\lambda_i}(x)\frac{1}{p}\sum_{j\in\bz}f\left(x+\frac jp\right)e_{-\lambda_i}\left(x+\frac jp\right),\mbox{ for a.e. $x\in\Omega$}.
\label{eq4.17.1}
\end{equation}
\end{proposition}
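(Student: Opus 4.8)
The plan is to reduce everything to Lemma \ref{lem4.16} by conjugating the projection $P(p\bz)$ with the unitary multiplication operator by $e_{\lambda_i}$. The starting observation is that the generating exponentials of the target subspace factor as $e_{\lambda_i+kp}=e_{\lambda_i}\,e_{kp}$ for every $k\in\bz$. Thus, if I let $M:=M(e_{\lambda_i})$ denote multiplication by $e_{\lambda_i}$ on $L^2(\Omega)$, then $M$ is unitary (since $|e_{\lambda_i}|\equiv1$), its adjoint is $M^*=M(e_{-\lambda_i})$, and $M$ sends $e_{kp}$ to $e_{\lambda_i+kp}$. Note also that $0\in\Lambda$ with $\lambda_0=0$ guarantees $\{e_{kp}:k\in\bz\}\subset\{e_\lambda:\lambda\in\Lambda\}$, so Lemma \ref{lem4.16} genuinely applies to the subspace associated with $p\bz$.

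Next I would record the key structural identity. Because $M$ is a unitary bijection of $L^2(\Omega)$ and maps $\clspan\{e_{kp}:k\in\bz\}$ onto $\clspan\{e_{\lambda_i+kp}:k\in\bz\}$, the standard fact that $U\,P_V\,U^*$ is the orthogonal projection onto $U V$ whenever $U$ is unitary and $P_V$ projects onto $V$ yields
\begin{equation}
P(\lambda_i+p\bz)=M\,P(p\bz)\,M^{*}=M(e_{\lambda_i})\,P(p\bz)\,M(e_{-\lambda_i}).
\label{eqplan}
\end{equation}
Applying \eqref{eqplan} to $f\in L^2(\Omega)$ gives $(P(\lambda_i+p\bz)f)(x)=e_{\lambda_i}(x)\,\bigl(P(p\bz)(e_{-\lambda_i}f)\bigr)(x)$, and then substituting $g=e_{-\lambda_i}f$ into the formula of Lemma \ref{lem4.16} produces
$$(P(\lambda_i+p\bz)f)(x)=e_{\lambda_i}(x)\,\frac1p\sum_{j\in\bz}f\!\left(x+\frac jp\right)e_{-\lambda_i}\!\left(x+\frac jp\right),$$
which is exactly \eqref{eq4.17.1}.

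I do not expect a serious obstacle here: once the intertwining \eqref{eqplan} is in place, the proof is a one-line substitution into Lemma \ref{lem4.16}. The only points requiring a little care are bookkeeping with the exceptional sets, namely that the equality in Lemma \ref{lem4.16} holds only a.e.\ on $\Omega$, so that after applying it to $g=e_{-\lambda_i}f$ and multiplying by the pointwise-defined factor $e_{\lambda_i}(x)$ the resulting identity again holds only a.e.\ on $\Omega$; and the convention (as in Lemma \ref{lem4.16}) that $f$, and hence $e_{-\lambda_i}f$, is extended by zero outside $\Omega$ so that the series over $j\in\bz$ makes sense. Neither of these affects the conclusion. The conceptual content is entirely the recognition that the subspace indexed by the coset $\lambda_i+p\bz$ is the unitary image under $M(e_{\lambda_i})$ of the subspace indexed by $p\bz$.
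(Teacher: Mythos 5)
Your proof is correct. The paper itself states Proposition \ref{pr4.17} without proof, deferring to \cite{DuJo12b}, but your argument is clearly the intended one: the right-hand side of \eqref{eq4.17.1} is literally $e_{\lambda_i}$ times the formula of Lemma \ref{lem4.16} evaluated at $e_{-\lambda_i}f$, and your conjugation identity $P(\lambda_i+p\bz)=M(e_{\lambda_i})\,P(p\bz)\,M(e_{-\lambda_i})$ — valid since multiplication by the unimodular function $e_{\lambda_i}$ is unitary on $L^2(\Omega)$ and carries the generators $e_{kp}$ to $e_{\lambda_i+kp}$, hence one closed span onto the other — packages this precisely, with the a.e.\ bookkeeping and the extension-by-zero convention handled as they should be.
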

%

\begin{proposition}\label{pr4.19}
Let $\Lambda=\{\lambda_0=0,\lambda_1,\dots,\lambda_{k(p)-1}\}+p\bz$ be as in Proposition \ref{pr4.17}. For $x\in\br$, let $\Omega_x=\{j\in\bz : x+\frac jp\in\Omega\}$. Then $|\Omega_x|=p$ for a.e. $x\in \br$ and, for $i,i'=0,\dots,p-1$:
\begin{equation}
\frac{1}{p}\sum_{j\in\Omega_x}e^{2\pi i(\lambda_i-\lambda_{i'})\frac jp}=\delta_{ii'}\mbox{ for a.e. $x\in\Omega$}.
\label{eq4.19.1}
\end{equation}
In other words , the set $\{\lambda_0,\dots,\lambda_{p-1}\}$ is spectral and, for a.e. $x\in\br$, $\frac1{p}\Omega_x$ is a spectrum for it.

\end{proposition}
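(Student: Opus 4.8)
The plan is to extract all three assertions from the explicit projection formula \eqref{eq4.17.1} of Proposition \ref{pr4.17}, after a preliminary simplification. First I would dispose of the cardinality claim: by Proposition \ref{pr4.15}, $\Omega$ is a $p$-tile of $\br$ by $\frac1p\bz$-translations, so for a.e.\ $x\in\br$ there are exactly $p$ integers $j$ with $x-\frac jp\in\Omega$; replacing $j$ by $-j$ turns this into exactly $p$ integers $j$ with $x+\frac jp\in\Omega$, i.e.\ $|\Omega_x|=p$ for a.e.\ $x\in\br$, hence for a.e.\ $x\in\Omega$ as well. Next I would simplify \eqref{eq4.17.1}: writing $e_{-\lambda_i}(x+\frac jp)=e_{-\lambda_i}(x)\,e^{-2\pi i\lambda_i j/p}$ and cancelling the outside factor $e_{\lambda_i}(x)$, the projection collapses to
\[
(P(\lambda_i+p\bz)f)(x)=\frac1p\sum_{j\in\Omega_x}f\Big(x+\frac jp\Big)\,e^{-2\pi i\lambda_i j/p},\qquad x\in\Omega,
\]
the sum running only over $j\in\Omega_x$ since $f$ is extended by $0$ off $\Omega$.

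To obtain \eqref{eq4.19.1} I would test this on $f=e_{\lambda_{i'}}$. Because $\{e_\lambda:\lambda\in\Lambda\}$ is an orthogonal basis and $\Lambda$ is the disjoint union of the cosets $\lambda_i+p\bz$ (the $\lambda_i$ being distinct in $[0,p)$), the ranges of the $P(\lambda_i+p\bz)$ are mutually orthogonal, whence $P(\lambda_i+p\bz)e_{\lambda_{i'}}=\delta_{ii'}e_{\lambda_{i'}}$. On the other hand, substituting $e_{\lambda_{i'}}(x+\frac jp)=e_{\lambda_{i'}}(x)\,e^{2\pi i\lambda_{i'}j/p}$ into the simplified formula gives
\[
(P(\lambda_i+p\bz)e_{\lambda_{i'}})(x)=e_{\lambda_{i'}}(x)\,\frac1p\sum_{j\in\Omega_x}e^{2\pi i(\lambda_{i'}-\lambda_i)j/p}.
\]
Equating the two and dividing by the unimodular (hence nonvanishing) factor $e_{\lambda_{i'}}(x)$ yields \eqref{eq4.19.1}, after conjugating to swap $\lambda_{i'}-\lambda_i$ for $\lambda_i-\lambda_{i'}$.

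For the concluding spectral statement I would invoke the row/column duality of a square unitary matrix. Fix $x$ with $|\Omega_x|=p$, set $A=\{\lambda_0,\dots,\lambda_{p-1}\}$ and $\mu_A=\frac1p\sum_{i}\delta_{\lambda_i}$; since the $\lambda_i$ are distinct, $L^2(\mu_A)$ is $p$-dimensional. Relation \eqref{eq4.19.1} says exactly that the $p\times p$ matrix $M=\big(\tfrac1{\sqrt p}e^{2\pi i\lambda_i j/p}\big)_{i,\,j\in\Omega_x}$ satisfies $MM^*=I$; as $M$ is square, also $M^*M=I$, i.e.\ $\frac1p\sum_{i=0}^{p-1}e^{2\pi i\lambda_i(j-j')/p}=\delta_{jj'}$ for $j,j'\in\Omega_x$. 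This is precisely the orthonormality of $\{e_{j/p}:j\in\Omega_x\}$ in $L^2(\mu_A)$, and being $p$ vectors in a $p$-dimensional space they form an orthogonal basis, so $A$ is spectral with spectrum $\frac1p\Omega_x$.

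The single load-bearing point, and where I would take most care, is that $M$ is \emph{square}: this requires $|\Omega_x|$ to equal the number of residues $\lambda_i$, which is exactly the content of the $p$-tiling in Proposition \ref{pr4.15}, and it is what both closes the diagonal case $i=i'$ of \eqref{eq4.19.1} and licenses the passage from $MM^*=I$ to $M^*M=I$. Everything else is routine manipulation of the explicit projection formula.
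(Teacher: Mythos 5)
Your proposal is correct, but note first that the paper contains no proof of Proposition \ref{pr4.19} to compare against: it is recalled, together with Propositions \ref{pr4.15} and \ref{pr4.17}, from \cite{DuJo12b}. Your argument is a valid self-contained derivation using only what the paper does state. The cardinality claim is exactly the $p$-tiling of Proposition \ref{pr4.15} after the sign flip $j\mapsto -j$; the simplification of \eqref{eq4.17.1} via $e_{-\lambda_i}(x+\frac jp)=e_{-\lambda_i}(x)e^{-2\pi i\lambda_i j/p}$ is right; and testing the simplified formula on $f=e_{\lambda_{i'}}$, combined with $P(\lambda_i+p\bz)e_{\lambda_{i'}}=\delta_{ii'}e_{\lambda_{i'}}$ (legitimate, since the $\lambda_i$ are distinct in $[0,p)$, so the cosets $\lambda_i+p\bz$ are disjoint and the corresponding basis vectors orthogonal), yields \eqref{eq4.19.1} after conjugation. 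For the final assertion, the passage from $MM^*=I$ to $M^*M=I$ for the square matrix $M=\bigl(\tfrac1{\sqrt p}e^{2\pi i\lambda_i j/p}\bigr)_{i,\,j\in\Omega_x}$ is the right mechanism, and you correctly isolate squareness — $|\Omega_x|=p=k(p)$, which holds because $|\Omega|=1$ forces $k(p)=p$ in Definition \ref{def3.8} — as the load-bearing step.

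One small point to tighten: your argument establishes \eqref{eq4.19.1}, hence the basis property of $\frac1p\Omega_x$, for a.e.\ $x\in\Omega$ (that is where Proposition \ref{pr4.17} applies), whereas the last sentence of the proposition asserts it for a.e.\ $x\in\br$. This is closed by one line: since $\Omega_{x+\frac kp}=\Omega_x-k$ for $k\in\bz$, the sum in \eqref{eq4.19.1} evaluated at $x+\frac kp$ equals the sum at $x$ multiplied by the unimodular factor $e^{-2\pi i(\lambda_i-\lambda_{i'})k/p}$, which preserves the value $\delta_{ii'}$ (it is $1$ on the diagonal where the factor is $1$, and $0$ off the diagonal where multiplication by a unimodular constant is harmless). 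Because $|\Omega_x|=p>0$ for a.e.\ $x\in\br$, almost every $x$ lies in $\Omega+\frac kp$ for some $k\in\bz$, so the conclusion propagates from $\Omega$ to all of $\br$ up to a countable union of translated null sets. With that remark added, the proof is complete.
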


The next theorem exploits the structure of the spectral set $\Omega$ described in the previous statements, to explain the form of the group of local translations. 

\begin{theorem}\label{pr4.8}
Let $\Omega$ be a bounded Borel subset of $\br$ with $|\Omega|=1$. Let $p\in\bn$. Suppose $\Omega$ $p$-tiles $\br$ by $\frac1p\bz$. Then, for a.e. $x\in\br$ the set 
\begin{equation}
\Omega_x:=\left\{k\in\bz : x+\frac kp\in\Omega\right\}
\label{eq4.8.1}
\end{equation}
has exactly $p$ elements 
\begin{equation}
\Omega_x=\{k_0(x)<k_1(x)<\dots<k_{p-1}(x)\}.
\label{eq4.8.2}
\end{equation}
For almost every $x\in\br$ there exist unique $y\in [0,\frac1p)$ and $i\in\{0,\dots,p-1\}$ such that $y+\frac{k_i(y)}p=x$. 

The functions $k_i$ have the following property
\begin{equation}
k_i(x+\frac1p)=k_i(x)-1,\quad(x\in\br,i=0,\dots,p-1).
\label{eq4.8.2.1}
\end{equation}

Consider the space of $\frac1p$-periodic vector valued functions $L^2([0,\frac1p),\bc^p)$. 
The operator $W:L^2(\Omega)\rightarrow L^2([0,\frac1p),\bc^p)$ defined by
\begin{equation}
(Wf)(x)=\begin{pmatrix}f\left(x+\frac{k_{0}(x)}p\right)\\
\vdots\\
f\left(x+\frac{k_{p-1}(x)}p\right)
\end{pmatrix},\quad(x\in[0,\frac1p),f\in L^2(\Omega)),
\label{eq4.8.3}
\end{equation}
is an isometric isomorphism with inverse
\begin{equation}
W^{-1}\begin{pmatrix}
f_0\\
\vdots\\
f_{p-1}\end{pmatrix}(x)=f_i(y),\mbox{ if }x=y+\frac{k_i(y)}p,\mbox{ with }y\in[0,\frac1p), i\in\{0,\dots,p-1\}.
\label{eq4.8.4}
\end{equation}

A set $\Lambda$ of the form $\Lambda=\{0=\lambda_0,\lambda_1,\dots,\lambda_{p-1}\}+p\bz$ is a spectrum for $\Omega$ if and only if $\{\lambda_0,\dots,\lambda_{p-1}\}$ is a spectrum for $\frac1p\Omega_x$ for a.e. $x\in[0,\frac1p)$. 

The exponential functions are mapped by $W$ as follows:
\begin{equation}
(We_{\lambda_i+np})(x)=e_{\lambda_i+np}(x)\begin{pmatrix}e_{\lambda_i}(\frac{k_0(x)}p)\\
\vdots\\
e_{\lambda_i}(\frac{k_{p-1}(x)}p)
\end{pmatrix}=:F_{i,n}(x),\quad (i=0,\dots,p-1, n\in\bz,x\in[0,\frac1p)).
\label{eq4.8.5}
\end{equation}

For $x$ in $\br$ define the $p\times p$ unitary matrix $\M_x$ which has column vectors 
$$v_i(x):=\frac{1}{\sqrt{p}}(e_{\lambda_i}(\frac{k_0(x)}p),e_{\lambda_i}(\frac{k_1(x)}p),\dots,e_{\lambda_i}(\frac{k_{p-1}(x)}p))^t,\quad i=0,\dots,p-1.$$

Let $U_\Lambda$ be the group of local translations on $\Omega$ associated to a spectrum $\Lambda$. 
Consider the one-parameter unitary group $\U_p$ on $L^2([0,\frac1p),\bc^p)$ defined by 
\begin{equation}
(\U_p(t)F)(x)=\M_x\M_{x+t}^*F(x+t),\quad (x,t\in\br, F\in L^2([0,\frac1p),\bc^p)).
\label{eq4.8.6}
\end{equation}
Then $W$ intertwines $U_\Lambda$ and $\U_p$:
\begin{equation}
WU_\Lambda(t)=\U_p(t)W.
\label{eq4.8.7}
\end{equation}

\end{theorem}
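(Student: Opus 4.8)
The plan is to verify the intertwining relation \eqref{eq4.8.7} on the orthogonal basis of $L^2(\Omega)$ supplied by the spectrum, and then extend by boundedness. Since $W$ is an isometric isomorphism and both $U_\Lambda(t)$ and $\U_p(t)$ are unitary, the two composites $WU_\Lambda(t)$ and $\U_p(t)W$ are bounded operators $L^2(\Omega)\to L^2([0,\tfrac1p),\bc^p)$, so it suffices to show that they agree on the total set $\{e_{\lambda_i+np} : i=0,\dots,p-1,\ n\in\bz\}$, which is an orthogonal basis because $\Lambda$ is a spectrum. Along the way I would record that $\U_p(t)$ is genuinely a strongly continuous one-parameter unitary group on the $\tfrac1p$-periodic vector-valued functions: its unitarity is immediate from that of each matrix $\M_x$, and strong continuity may be transported from $U_\Lambda$ once the intertwining is established.

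Next I would evaluate each side on a fixed basis vector $e_{\lambda_i+np}$. For the left-hand side, the definition \eqref{eq3.3.3} of $U_\Lambda$ gives $U_\Lambda(t)e_{\lambda_i+np}=e^{2\pi i(\lambda_i+np)t}e_{\lambda_i+np}$, so applying \eqref{eq4.8.5} yields $WU_\Lambda(t)e_{\lambda_i+np}=e^{2\pi i(\lambda_i+np)t}F_{i,n}$. For the right-hand side I would first rewrite \eqref{eq4.8.5} as $F_{i,n}(x)=\sqrt p\,e_{\lambda_i+np}(x)\,v_i(x)$, where $v_i(x)$ is the $i$-th column of $\M_x$, and then compute
\[
(\U_p(t)F_{i,n})(x)=\M_x\M_{x+t}^*F_{i,n}(x+t)=\sqrt p\,e_{\lambda_i+np}(x+t)\,\M_x\M_{x+t}^*v_i(x+t).
\]
The crucial simplification is $\M_{x+t}^*v_i(x+t)=\varepsilon_i$ (the $i$-th standard basis vector of $\bc^p$) together with $\M_x\varepsilon_i=v_i(x)$: the first holds precisely because the columns of $\M_{x+t}$ are orthonormal, which is exactly the content of Proposition \ref{pr4.19} (equation \eqref{eq4.19.1}), and the second is the definition of $v_i(x)$ as the $i$-th column of $\M_x$. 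Finally the phase identity $e_{\lambda_i+np}(x+t)=e^{2\pi i(\lambda_i+np)t}e_{\lambda_i+np}(x)$ turns the right-hand side into $e^{2\pi i(\lambda_i+np)t}\sqrt p\,e_{\lambda_i+np}(x)v_i(x)=e^{2\pi i(\lambda_i+np)t}F_{i,n}(x)$, which matches the left-hand side.

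I expect the main obstacle to be bookkeeping rather than a genuine difficulty: one must check that all objects are consistently defined almost everywhere and that $\U_p(t)$ really descends to the $\tfrac1p$-periodic functions. Concretely, from $k_m(x+\tfrac1p)=k_m(x)-1$ (equation \eqref{eq4.8.2.1}) one obtains $v_i(x+\tfrac1p)=e^{-2\pi i\lambda_i/p}v_i(x)$, hence $\M_{x+\frac1p}=\M_x D$ with $D=\operatorname{diag}(e^{-2\pi i\lambda_0/p},\dots,e^{-2\pi i\lambda_{p-1}/p})$; since $DD^*=I$, the expression $\M_x\M_{x+t}^*F(x+t)$ is unchanged under $x\mapsto x+\tfrac1p$ for $\tfrac1p$-periodic $F$, so $\U_p(t)$ is well defined, and the identical computation shows that $F_{i,n}$ is itself $\tfrac1p$-periodic, confirming that $W$ indeed lands in the stated space. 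With these consistency checks in place, the verification on basis vectors completes the proof.
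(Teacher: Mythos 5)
The portion you actually argue --- well-definedness of $\U_p$ on the $\frac1p$-periodic functions and the intertwining relation \eqref{eq4.8.7} --- is correct and coincides essentially step for step with the paper's own proof: the same reduction to the basis vectors $e_{\lambda_i+np}$, the same key identity $\M_{x+t}^*v_i(x+t)=\delta_i$ (the $i$-th standard basis vector, valid because $\M_{x+t}$ is unitary by Proposition \ref{pr4.19}) followed by $\M_x\delta_i=v_i(x)$, and the same periodicity check $\M_{x+\frac1p}=\M_x D^*$ with $D$ the diagonal phase matrix, derived from \eqref{eq4.8.2.1}. Your remark that strong continuity of $\U_p$ can be transported through $W$ is a harmless addition the paper leaves implicit.

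The problem is that the theorem asserts considerably more than \eqref{eq4.8.7}, and your proposal skips or assumes the rest --- with one assumption that is circular. You open with ``Since $W$ is an isometric isomorphism,'' but that is itself one of the claims to be proved; without it, your ``check on a total set and extend by boundedness'' step has no foundation. The paper proves it by partitioning $[0,\frac1p)$ into the sets $A_S=\{x\in[0,\frac1p) : \Omega_x=S\}$ (only finitely many nonempty, since $\Omega$ is bounded), using the partition $\bigcup_{|S|=p}\bigl(A_S+\frac1p S\bigr)=\Omega$ and a change of variables to get $\|Wf\|^2=\|f\|^2$; some such argument is needed. Also untouched are the a.e.\ statements that $|\Omega_x|=p$ and that $x=y+\frac{k_i(y)}p$ has a unique representation (these come from the $p$-tiling hypothesis), the verification of \eqref{eq4.8.5} (a one-line computation, but part of the statement), and, most substantially, the equivalence ``$\Lambda$ is a spectrum for $\Omega$ if and only if $\{\lambda_0,\dots,\lambda_{p-1}\}$ is a spectrum for $\frac1p\Omega_x$ for a.e.\ $x$.'' The forward direction is Proposition \ref{pr4.19}, which you cite, but the converse requires showing that the functions $F_{i,n}$ form an orthonormal basis of $L^2([0,\frac1p),\bc^p)$; the paper does this using the pointwise orthonormality of the vectors $v_i(x)$ together with completeness of $\{e_{np}\}$ in $L^2[0,\frac1p)$ to conclude that any $H$ orthogonal to all $F_{i,n}$ vanishes a.e. As written, your proposal establishes only the final intertwining clause of the theorem, conditional on the earlier clauses.
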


\begin{proof}
The first statement follows from the fact that $\Omega$ $p$-tiles $\br$ by $\frac1p\bz$. The second statement follows from this and the fact that $[0,\frac1p)$ tiles $\br$ by $\frac1p\bz$.

To check that $W$ and $W^{-1}$, as defined, are inverse to each other requires just a simple computation. We verify that $W$ is isometric. 

For a subset $S$ of $\bz$ with $|S|=p$ define 
$$A_S:=\{x\in[0,\frac1p) : \Omega_x=S\}.$$
Note that, since $\Omega$ is bounded, $A_S=\ty$ for all but finitely many sets $S$. Also we have the following partition of $\Omega$. 
$$\bigcup_{|S|=p}(A_S+\frac1p S)=\Omega.$$

Take $f\in L^2(\Omega)$.  We have 
$$\|Wf\|_{L^2([0,\frac1p),\bc^p)}^2=\int_0^{\frac1p}\sum_{j=0}^{p-1}\left|f\left(x+\frac{k_j(x)}p\right)\right|^2\,dx=
\sum_{|S|=p}\int_{A_S}\sum_{j=0}^{p-1}\left|f\left(x+\frac{k_j(x)}p\right)\right|^2\,dx$$
$$=\sum_{|S|=p}\int_{A_S}\sum_{s\in S}\left|f\left(x+\frac{s}p\right)\right|^2\,dx=\sum_{|S|=p}\sum_{s\in S}\int_{A_S+\frac sp}|f(x)|^2\,dx=\int_{\Omega}|f(x)|^2\,dx.$$

Equation \eqref{eq4.8.5} requires just a simple computation. 

If $\Lambda$ is a spectrum for $\Omega$, then we saw in Proposition \ref{pr4.19} that $\frac1p\Omega_x$ has spectrum $\{\lambda_0,\dots,\lambda_{p-1}\}$ for a.e. $x\in\br$.

For the converse, if $\{\lambda_0,\dots,\lambda_{p-1}\}$ is a spectrum for a.e. $x\in[0,\frac1p)$, then for a.e. $x\in [0,\frac1p)$,the vectors $v_i(x)=\frac1{\sqrt{p}}(e_{\lambda_i}(\frac{k_0(x)}p),\dots,e_{\lambda_i}(\frac{k_{p-1}(x)}p))^t$, $i=0,\dots,p-1$, form an orthonormal basis for $\bc^p$. Then the functions $F_{i,n}$ in \eqref{eq4.8.5} form an orthonormal basis for $L^2([0,\frac1p),\bc^p)$ as can be seen by a short computation. To see that the functions $F_{i,n}$ span the entire Hilbert space, take $H$ in $L^2([0,\frac1p),\bc^p)$ such that $H\perp F_{i,n}$ for all $i=0,\dots,p-1$, $n\in\bz$. Then 
$$0=\int_0^{\frac1p}e_{\lambda_i+np}(x)\ip{H(x)}{v_i(x)}_{\bc^p}\,dx.$$
Since the functions $e_{np}$ are complete in $L^2[0,\frac1p)$ it follows that $e_{\lambda_i}(x)\ip{H(x)}{v_i(x)}_{\bc^p}=0$ for a.e. $x\in[0,\frac1p)$. So $\ip{H(x)}{v_i(x)}=0$ for a.e. $x\in[0,\frac1p)$ and all $i=0,\dots,p-1$. Then $H(x)=0$ for a.e. $x\in[0,\frac1p)$.

Next, we check that $\U_p$ is well defined, so the function in \eqref{eq4.8.6} is $\frac1p$-periodic. We have 

$$\M_{x+\frac{1}p}=\frac{1}{\sqrt p}\left(e_{\lambda_j}(\frac{k_i(x+\frac1p)}p)\right)_{i,j=0,\dots,p-1}=\frac{1}{\sqrt p} \left(e_{\lambda_j}(\frac{k_i(x)-1}{p})\right)_{i,j},$$
therefore $\M_{x+\frac1p}=\M_xD_\lambda(\frac1p)^*$, where $D_\lambda(\frac{1}{p})$ is the diagonal matrix with entries $e_{\lambda_i}(\frac1p)$.

We have, for $x,t\in\br$, $F\in L^2([0,\frac1p),\bc^p)$:
$$\M_{x+\frac1p}\M_{x+t+\frac{1}p}^*F(x+t+\frac1p)=\M_xD_\lambda(\frac1p)^* D_\lambda(\frac1p)\M_{x+t}^*F(x+t)=\M_x\M_{x+t}^*F(x+t).$$

The fact that the matrices $\M_x$ and $\M_{x+t}$ are unitary implies that $\U_p(t)$ is unitary.

To obtain \eqref{eq4.8.7}, it is enough to verify it on the basis $e_{\lambda_i+np}$ and that is equivalent to:
\begin{equation}
\U_p(t)F_{i,n}=e_{\lambda_i+np}(t)F_{i,n},\quad(i=0,\dots,p-1,n\in\bz).
\label{eq4.8.8}
\end{equation}

Note first that $\M_{x}^*v_i(x)=\delta_i$ for all $x\in\br$, $i=0,\dots,p-1$. We have 
$$(\U_p(t)F_{i,n})(x)=\M_x\M_{x+t}^*F_{i,n}(x+t)=\sqrt{p}e_{\lambda_i+np}(x+t)\M_x\M_{x+t}^*v_{i}(x+t)=\sqrt{p}e_{\lambda_i+np}(x+t)\M_x\delta_i$$$$=\sqrt{p}e_{\lambda_i+np}(t)e_{\lambda_i+np}(x)v_{i}(x)=e_{\lambda_i+np}(t)F_{i,n}(x).$$

\end{proof}

In the next two propositions we give some equivalent representations of the group of local translations, one involving the usual translation in $\br$ and the second involving induced representations in the sense of Mackey. 

\begin{proposition}\label{pr4.9}
Let $\Omega$ be a bounded Borel subset of $\br$ with $|\Omega|=1$. Assume that $\Omega$ has a spectrum $\Lambda$ with period $p\in\bn$ and 
$$\Lambda=\{\lambda_0=0,\lambda_1,\dots,\lambda_{p-1}\}+p\bz.$$
Identify $L^2[0,\frac1p]$ (with the normalized Lebesgue measure), with $\frac1p$-periodic $L^2$-functions. Define the projections $P(\lambda_i+p\bz)$ as in \eqref{eq4.17.1}, but for all $x\in\br$. Define the operator 
\begin{equation}
W:L^2(\Omega)\rightarrow\underbrace{L^2[0,\frac1p]\oplus\dots\oplus L^2[0,\frac1p]}_{\mbox{$p$ times }},\quad Wf=\left(e_{-\lambda_i}P(\lambda_i+p\bz)f\right)_{i=1,\dots,p},\quad(f\in L^2(\Omega)).
\label{eq4.9.1}
\end{equation}
Then $W$ is an isometric isomorphism with the following properties:
\begin{enumerate}
	\item for all $i=0,\dots,p-1$, $k\in\bz$,
	\begin{equation}
	We_{\lambda_i+pk}=(0,\dots,0,\underbrace{e_{pk}}_{\mbox{$i$-th position}},0,\dots,0)
\label{eq4.9.2}
\end{equation}

	\item Let $P_i$ be the projection onto the $i$-th component in $L^2[0,\frac1p]\oplus\dots\oplus L^2[0,\frac1p]$. Then 
	\begin{equation}
	WP(\lambda_i+p\bz)W^*=P_i,\quad i=1,\dots,p
\label{eq4.9.3}
\end{equation}
\item Let $U_\Lambda$ be the local translation group associated to $\Lambda$. Let $T_t$ be the translation operator on $L^2[0,\frac1p]$,
\begin{equation}
(T_tf)(x)=f(x+t),\quad(x,t\in\br).
\label{eq4.9.4}
\end{equation} 

Then $U_\Lambda$ commutes with the projections $P(\lambda_i+p\bz)$, $i=0,\dots,p-1$ and, for $t\in\br$, 
\begin{equation}
WU_\Lambda(t) W^*=\begin{bmatrix}
e_{\lambda_0}(t)T_t&0&\cdots&0\\
0&e_{\lambda_1}(t)T_t&\cdots &0\\
& &\ddots& \\
0&\cdots&0&e_{\lambda_{p-1}}(t)T_t
\end{bmatrix}
\label{eq4.9.5}
\end{equation}

\end{enumerate}

\end{proposition}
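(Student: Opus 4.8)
The plan is to reduce everything to a verification on the orthonormal basis $\{e_{\lambda_i+pk}: i=0,\dots,p-1,\ k\in\bz\}$ of $L^2(\Omega)$, which is indeed an orthonormal basis because $|\Omega|=1$ and $\Lambda$ is a spectrum. First I would record the two structural facts that drive the argument. Since $\Lambda=\bigcup_{i=0}^{p-1}(\lambda_i+p\bz)$, the closed subspaces $H_i:=\range P(\lambda_i+p\bz)=\clspan\{e_{\lambda_i+pk}:k\in\bz\}$ are mutually orthogonal with $L^2(\Omega)=\bigoplus_{i=0}^{p-1}H_i$; in particular $P(\lambda_j+p\bz)e_{\lambda_i+pk}=\delta_{ij}e_{\lambda_i+pk}$. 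Second, by Theorem \ref{th4.1} (equivalently, directly from Definition \ref{def3.3}) the group acts diagonally on the basis, $U_\Lambda(t)e_{\lambda_i+pk}=e_{\lambda_i+pk}(t)\,e_{\lambda_i+pk}$, so each $H_i$ is $U_\Lambda(t)$-invariant and therefore $U_\Lambda$ commutes with every $P(\lambda_i+p\bz)$.

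Before computing I would check that $W$ takes values where claimed. Using formula \eqref{eq4.17.1} extended to all $x\in\br$, one sees that $e_{-\lambda_i}P(\lambda_i+p\bz)f$ is the function $x\mapsto \frac1p\sum_{j\in\bz}f(x+\frac jp)e_{-\lambda_i}(x+\frac jp)$, and the substitution $j\mapsto j+1$ shows it is $\frac1p$-periodic, hence a legitimate element of $L^2[0,\frac1p]$ under the stated identification. I would also record that, with the normalized measure $p\,dx$ on $[0,\frac1p]$, the characters $\{e_{pk}:k\in\bz\}$ form an orthonormal basis of $L^2[0,\frac1p]$; consequently $\{(0,\dots,e_{pk},\dots,0)\}$, with $e_{pk}$ in the $i$-th slot, is an orthonormal basis of the $p$-fold direct sum.

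Next I would establish (i), (ii), and the isometry in one stroke. Applying the definition of $W$ to $e_{\lambda_i+pk}$ and using $P(\lambda_j+p\bz)e_{\lambda_i+pk}=\delta_{ij}e_{\lambda_i+pk}$ together with $e_{-\lambda_i}e_{\lambda_i+pk}=e_{pk}$ yields exactly \eqref{eq4.9.2}. Thus $W$ carries the orthonormal basis $\{e_{\lambda_i+pk}\}$ bijectively onto the orthonormal basis $\{(0,\dots,e_{pk},\dots,0)\}$, so $W$ is an isometric isomorphism. Property \eqref{eq4.9.3} then drops out by comparing both sides on basis vectors: $WP(\lambda_i+p\bz)e_{\lambda_j+pk}=\delta_{ij}(0,\dots,e_{pk},\dots,0)=P_i\,We_{\lambda_j+pk}$, whence $WP(\lambda_i+p\bz)W^*=P_i$.

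Finally, for the intertwining \eqref{eq4.9.5} I would again evaluate on the basis. On the one hand, $WU_\Lambda(t)e_{\lambda_i+pk}=e_{\lambda_i+pk}(t)(0,\dots,e_{pk},\dots,0)$ by the diagonal action and \eqref{eq4.9.2}. On the other hand, feeding $We_{\lambda_i+pk}=(0,\dots,e_{pk},\dots,0)$ into the block-diagonal matrix and using $T_te_{pk}=e_{pk}(t)e_{pk}$ together with $e_{\lambda_i}(t)e_{pk}(t)=e_{\lambda_i+pk}(t)$ produces the same vector; hence the two operators agree on a basis and \eqref{eq4.9.5} follows. I do not expect a serious obstacle here, since the whole proposition is a change-of-variables/bookkeeping statement. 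The only points that genuinely require care are the periodicity check that places $e_{-\lambda_i}P(\lambda_i+p\bz)f$ in $L^2[0,\frac1p]$, and keeping the measure normalization $p\,dx$ straight so that the characters $e_{pk}$ are genuine unit vectors and $W$ comes out isometric rather than merely bounded.
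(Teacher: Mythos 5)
Your proposal is correct and follows essentially the same route as the paper's own proof: both verify \eqref{eq4.9.2}, the isometry, \eqref{eq4.9.3}, and \eqref{eq4.9.5} by direct computation on the orthonormal basis $\{e_{\lambda_i+pk}\}$, using $P(\lambda_j+p\bz)e_{\lambda_i+pk}=\delta_{ij}e_{\lambda_i+pk}$, the diagonal action $U_\Lambda(t)e_{\lambda_i+pk}=e_{\lambda_i}(t)e_{pk}(t)e_{\lambda_i+pk}$, and $T_te_{pk}=e_{pk}(t)e_{pk}$. Your added checks (the $\frac1p$-periodicity of $e_{-\lambda_i}P(\lambda_i+p\bz)f$ via \eqref{eq4.17.1} and the measure normalization making $\{e_{pk}\}$ orthonormal) are details the paper leaves implicit, and they are welcome.
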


\begin{proof}
We check (i). Since $P(\lambda_i+p\bz)e_{\lambda_j+pk}=\delta_{ij}e_{\lambda_j+pk}$, \eqref{eq4.9.2} follows. Since $\Lambda$ is a spectrum, this implies that $W$ maps an orthonormal basis to an orthonormal basis, so it is an isometric isomorphism. 

(ii) can be checked on the basis $e_{\lambda_i+pk}$, $i=0,\dots,p-1$, $k\in\bz$. For (iii), we have $U_\Lambda(t)e_{\lambda_i+pk}=e_{\lambda_i}(t)e_{pk}(t)e_{\lambda_i+pk}$ and 
$T_te_{pk}=e_{pk}(t)e_{pk}$. This implies \eqref{eq4.9.5} and the fact that $U_\Lambda$ commutes with the projections $P(\lambda_i+p\bz)$. 

\end{proof}

\begin{proposition}\label{pr4.10}
Let $\Omega$ be a bounded Borel subset of $\br$ with $|\Omega|=1$. Assume that $\Omega$ has a spectrum $\Lambda$ with period $p\in\bn$ and 
$$\Lambda=\{\lambda_0=0,\lambda_1,\dots,\lambda_{p-1}\}+p\bz.$$
Let $U_\Lambda$ be the one-parameter group of local translations associated to $\Lambda$.

Let $T$ be a unitary matrix with eigenvalues $e_{\lambda_0}(\frac1p),\dots,e_{\lambda_{p-1}}(\frac1p)$. Consider the induced representation from $\bz$ to $\br$: let $\H_T$ be the Hilbert space:
\begin{equation}
\H_T:=\left\{ f:\br\rightarrow \bc^p : f\mbox{ measurable }, \int_0^{\frac1p}\|f(x)\|_{\bc^p}^2\,dx<\infty, f(x+\frac 1p)=T f(x)\mbox{ for a.e. }x\in\br\right\},
\label{eq4.10.1}
\end{equation}
with inner product
\begin{equation}
\ip{f}{g}_{\H_T}=p\int_0^{\frac1p}\ip{f(x)}{g(x)}_{\bc^p}\,dx.
\label{eq4.10.2}
\end{equation}
Define the one-parameter group of unitary transformations 
\begin{equation}
(U_T(t)f)(x)=f(x+t)\quad(t,x\in\br, f\in \H_T).
\label{eq4.10.3}
\end{equation}
Then there exists a isometric isomorphism from $L^2(\Omega)$ onto $\H_T$ that intertwines $U_\Lambda$ and $U_T$, i.e., 
\begin{equation}
WU_\Lambda(t)=U_T(t)W,\quad(t\in\br).
\label{eq4.10.4}
\end{equation}
\end{proposition}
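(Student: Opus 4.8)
The plan is to build the required intertwiner by composing the isometric isomorphism of Proposition \ref{pr4.9} with a second unitary that converts the diagonal translation model into the induced-representation model $\H_T$. Recall from Proposition \ref{pr4.9} that there is an isometric isomorphism $W_1:L^2(\Omega)\rightarrow\bigoplus_{i=0}^{p-1}L^2[0,\frac1p]$ (the $p$ copies carrying normalized Lebesgue measure, functions identified with $\frac1p$-periodic ones) with $W_1U_\Lambda(t)W_1^*=V(t)$, where, by \eqref{eq4.9.5}, $V(t)=\mathrm{diag}\bigl(e_{\lambda_0}(t)T_t,\dots,e_{\lambda_{p-1}}(t)T_t\bigr)$ and $T_t$ is ordinary translation. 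It therefore suffices to produce an isometric isomorphism $\Phi:\H_T\rightarrow\bigoplus_{i=0}^{p-1}L^2[0,\frac1p]$ intertwining $U_T$ with $V$; then $W:=\Phi^{-1}W_1$ solves the problem, since $WU_\Lambda(t)=\Phi^{-1}V(t)W_1=U_T(t)\Phi^{-1}W_1=U_T(t)W$.

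To construct $\Phi$, I would first diagonalize. Since $T$ is unitary with eigenvalues $e_{\lambda_0}(\frac1p),\dots,e_{\lambda_{p-1}}(\frac1p)$, write $T=QDQ^*$ with $Q$ unitary and $D=\mathrm{diag}(e_{\lambda_0}(\frac1p),\dots,e_{\lambda_{p-1}}(\frac1p))$. For $f\in\H_T$ put $g:=Q^*f$ (applied pointwise); the covariance $f(x+\frac1p)=Tf(x)$ becomes $g(x+\frac1p)=Dg(x)$, i.e. $g_i(x+\frac1p)=e_{\lambda_i}(\frac1p)g_i(x)$ componentwise. As $Q$ is a constant matrix it commutes with translation, so $f\mapsto g$ is isometric and intertwines $U_T$ with translation in the diagonalized picture. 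I then define $(\Phi f)_i(x):=e_{-\lambda_i}(x)g_i(x)$. Each $(\Phi f)_i$ is $\frac1p$-periodic because the twist cancels exactly: $(\Phi f)_i(x+\frac1p)=e_{-\lambda_i}(x)e_{-\lambda_i}(\frac1p)\,e_{\lambda_i}(\frac1p)g_i(x)=e_{-\lambda_i}(x)g_i(x)=(\Phi f)_i(x)$. Since $|(\Phi f)_i(x)|=|g_i(x)|$, the factor $p$ in \eqref{eq4.10.2} matches the normalized measure $p\,dx$ on $[0,\frac1p]$, so $\Phi$ is isometric, and reversing the construction gives surjectivity.

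The intertwining $\Phi U_T(t)=V(t)\Phi$ is then a short computation: writing $h=U_T(t)f$ and using $g_i(y)=e_{\lambda_i}(y)(\Phi f)_i(y)$,
\begin{equation}
(\Phi h)_i(x)=e_{-\lambda_i}(x)g_i(x+t)=e_{-\lambda_i}(x)e_{\lambda_i}(x+t)(\Phi f)_i(x+t)=e_{\lambda_i}(t)\bigl(T_t(\Phi f)_i\bigr)(x),
\label{eq4.10.pf}
\end{equation}
which is exactly the $i$-th component of $V(t)\Phi f$. Combining this with $W=\Phi^{-1}W_1$ yields \eqref{eq4.10.4}.

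I expect the only real friction to be bookkeeping rather than ideas: one must confirm that $\Phi$ genuinely lands among the $\frac1p$-periodic functions, which hinges on the exact cancellation of the $e_{\lambda_i}(\frac1p)$ twist against the phase $e_{-\lambda_i}$, and one must track the normalizations carefully so that the factor $p$ in the $\H_T$ inner product \eqref{eq4.10.2} is reconciled with the normalized Lebesgue measure used in Proposition \ref{pr4.9}, guaranteeing that $\Phi$ (hence $W$) is isometric and onto rather than merely bounded. Once the diagonalized model is in place, the intertwining identity \eqref{eq4.10.pf} is immediate.
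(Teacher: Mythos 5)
Your argument is correct, and it reaches the conclusion by a genuinely different route from the paper. The paper works directly in $\H_T$: it diagonalizes $T$ by a unitary $A$ with orthonormal eigenvector columns $v_k$, sets $F_{k,n}(x)=e_{\lambda_k+np}(x)v_k$, verifies $F_{k,n}\in\H_T$, proves that $\{F_{k,n}\}$ is an orthonormal basis of $\H_T$ (orthonormality by direct computation; completeness by noting that for $H\perp F_{k,n}$ the function $x\mapsto\ip{H(x)}{e_{\lambda_k}(x)v_k}_{\bc^p}$ is $\frac1p$-periodic, hence vanishes a.e.\ on all of $\br$ once it vanishes on $[0,\frac1p]$), and then defines $W$ by $We_{\lambda_k+np}=F_{k,n}$ and checks \eqref{eq4.10.4} on this basis. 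You instead route everything through Proposition \ref{pr4.9} and build the pointwise unitary $\Phi$ (diagonalize $T=QDQ^*$, twist the $i$-th component by $e_{-\lambda_i}$), so the paper's completeness argument is replaced by exhibiting an explicit two-sided inverse for $\Phi$; the two delicate points you flag---the exact cancellation giving $(\Phi f)_i(x+\frac1p)=(\Phi f)_i(x)$, and reconciling the factor $p$ in \eqref{eq4.10.2} with the normalized measure on $[0,\frac1p]$---are indeed the only places where something could go wrong, and you handle both correctly (note the eigenvalues $e_{\lambda_i}(\frac1p)$ are distinct since the $\lambda_i$ are distinct in $[0,p)$, so the unitary diagonalization with the prescribed ordering exists). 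In fact the two constructions produce the same operator: your $\Phi^{-1}$ sends the tuple with $e_{np}$ in the $k$-th slot and zeros elsewhere to $F_{k,n}$, while by \eqref{eq4.9.2} $W_1e_{\lambda_k+np}$ is exactly that tuple, so $W=\Phi^{-1}W_1$ satisfies $We_{\lambda_k+np}=F_{k,n}$, which is precisely the paper's map. What each approach buys: yours is shorter modulo Proposition \ref{pr4.9} and makes the induced-representation structure transparent (the phase twist trivializes the $T$-cocycle, reducing $\H_T$ to a plain direct sum), whereas the paper's proof is self-contained, independent of Proposition \ref{pr4.9}, and along the way produces the explicit eigenbasis $\{F_{k,n}\}$ of $U_T$ in $\H_T$, exhibiting directly that $U_T$ has pure point spectrum $\Lambda$.
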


\begin{proof}
Let $A$ be a $p\times p$ unitary matrix that diagonalizes $T$, i.e., $A^{-1}TA$ is the diagonal matrix with entries $e_{\lambda_k}(\frac{1}{p})$. Let $v_k=(A_{0,k},\dots,A_{p-1,k})^t$ be the column vectors of $A$, $k=0,\dots,p-1$. Note that $Tv_k=e_{\lambda_k}(\frac1p)v_k$ and $\{v_k\}$ form an orthonormal basis for $\bc^p$. 

Define
\begin{equation}
F_{k,n}(x)=e_{\lambda_k+np}(x)v_k,\quad(x\in\br, k=0,\dots,p-1,n\in\bz).
\label{eq4.10.5}
\end{equation}
We check that $F_{k,n}$ is in $\H_T$. For every $x\in\br$, 
$$F_{k,n}(x+\frac1p)=e_{\lambda_k+np}(x+\frac 1p)v_k=e_{\lambda_k+np}(x)e_{\lambda_k}(\frac1p)v_k=e_{\lambda_k+np}(x)Tv_k=TF_{k,n}(x).$$
Thus, $F_{k,n}\in\H_T$.

We prove that $\{F_{k,n}\}$ is an orthonormal basis for $\H_T$. We have
$$\ip{F_{k,n}}{F_{l,m}}_{\H_T}=p\int_0^{\frac1p}\ip{v_k}{v_l}_{\bc^p}\cj{e_{\lambda_k+np}(x)}e_{\lambda_l+mp}(x)\,dx=\delta_{kl}p\int_0^{\frac1p}\cj{e_{np}(x)}e_{mp}(x)\,dx=\delta_{kl}\delta_{mn}.$$

So $\{F_{k,n}\}$ are orthonormal in $\H_T$. We prove that they are complete. Let $H\in\H_T$ such that $H\perp F_{k,n}$ for all $k=0,\dots,p-1$, $n\in\bz$. Then 
$$0=\ip{H}{F_{k,n}}_{\H_T}=p\int_0^{\frac1p}\ip{H(x)}{e_{\lambda_k}(x)v_k}_{\bc^p}e_{np}(x)\,dx.$$
Then, since $e_{np}$ form an ONB in $L^2[0,\frac1p]$ we get that $\ip{H(x)}{e_{\lambda_k}(x)v_k}_{\bc^p}=0$ for a.e. $x\in[0,\frac1p]$. 

But, note that $$\ip{H(x+\frac1p)}{e_{\lambda_k}(x+\frac1p)v_k}_{\bc^p}=\ip{TH(x)}{Te_{\lambda_k}(x)v_k}_{\bc^p}=\ip{H(x)}{e_{\lambda_k}(x)v_k}_{\bc^p},$$
so the function $x\mapsto \ip{H(x)}{e_{\lambda_k}(x)v_k}_{\bc^p}$ is $\frac1p$-periodic. Therefore $\ip{H(x)}{e_{\lambda_k}(x)v_k}_{\bc^p}=0$ for a.e. $x\in\br$. 

Since $\{v_k\}$ form an ONB in $\bc^p$, we get that $H(x)=0$ for a.e. $x\in\br$. Thus $\{F_{k,n}\}$ is complete.

Define $W$ from $L^2(\Omega)$ onto $\H_T$ by $W(e_{\lambda_k+np})=F_{k,n}$ for all $k=0,\dots,p-1$, $n\in\bz$. Then $W$ extends linearly to an isometric isomorphism. 

We have, for $x\in\br$, $t\in\br$, $k=0,\dots,p-1$, $n\in\bz$: 
$$(WU_\Lambda(t)e_{\lambda_k+np})(x)=(W(e_{\lambda_k+np}(t)e_{\lambda_k+np}))(x)=e_{\lambda_k+np}(t)F_{k,n}(x)=
e_{\lambda_k+np}(t)e_{\lambda_k+np}(x)v_k$$$$
=e_{\lambda_k+np}(x+t)v_k=F_{k,n}(x+t)=(U_T(t)F_{k,n})(x)=(U_T(t)We_{\lambda_k+np})(x).
$$
This implies \eqref{eq4.10.4}.
\end{proof}

\section{Self-adjoint extensions of $\frac{1}{2\pi i}\frac{d}{dx}$}

In what follows we will restrict our attention to the case when $\Omega$ is a finite union of intervals. When such a union is spectral, the group of local translations has as infinitesimal generator an extension of the differential operator $\frac1{2\pi i}\frac{d}{dx}$.

 Let $\Omega$ be a bounded open subset in $\br^k$ of finite positive Lebesgue measure. In the general case of \cite{Fug74}, one deals with the partial derivative operators defined on the common dense domain  $C_0^\infty$  in $L^2(\Omega)$. We will refer to these as the minimal operators.

      If the dimension $k = 1$, the minimal operator will be denoted $\Ds$, or $\Dmin$  when the emphasis is needed. While the general framework for our analysis is the interplay between operator theoretic and spectral theoretic questions for commuting self-adjoint extension operators, we will make two restrictions here: one is $k = 1$, and the other is that we assume $\Omega$ is a finite union of connected components (intervals). In this case $\Dmin$ has von Neumann deficiency indices $(n, n)$ where $n$ is the number of intervals. If $k > 1$, each of the minimal operators has deficiency indices $(\infty, \infty)$.

  Our first result for the case when $\Omega$ is a finite union of $n$ components, i.e., $n$ open intervals, is Theorem \ref{th1.7}: we spell out a bijective correspondence between the set of all self-adjoint extensions of the minimal operator on the one hand, and on the other, elements in the group $U_n$ of all unitary $n \times n$ complex matrices. Given a self-adjoint extension $A$, the corresponding $n \times n$ $B$ in $U_n$ is acting between the boundary values computed for functions in the domain of $A$. We will say that $B$ is the associated boundary matrix.

      While this correspondence was also discussed in \cite{JPTa,JPTb,JPTc}, we have included the details here in a more complete form, as they will be needed. Moreover, our present treatment (see Proposition \ref{pr1.11}) includes explicit and constructive rules for the correspondence in both directions.

\begin{definition}\label{def1.1}

Let 
$$\Omega=\bigcup_{i=1}^n(\alpha_i,\beta_i)\mbox{ where } -\infty<\alpha_1<\beta_1<\alpha_2<\beta_2<\dots<\alpha_n<\beta_n<\infty.$$
So 
$$\Omega=\bigcup_{i=1}^nJ_i\mbox{ where }J_i=(\alpha_i,\beta_i)\mbox{ for all }i\in\{1,\dots,n\}.$$
On $\Omega$ we consider the Lebesgue measure $dx$. We denote by $\partial\Omega$ the boundary of $\Omega$, 
$$\partial\Omega=\left\{\alpha_i,\beta_i : i\in\{1,\dots,n\}\right\}.$$
For a function $f$ on $\partial\Omega$ we use the notation 
$$\int_{\partial \Omega}f=\sum_{i=1}^n(f(\beta_i)-f(\alpha_i)).$$

Consider the subspace of infinitely differentiable compactly supported functions $C_c^\infty(\Omega)$. 
We define the differential operator $\Ds$ on $C_c^\infty(\Omega)$:
$$\Ds f=\frac{1}{2\pi i}f',\quad(f\in C_c^\infty(\Omega))$$

Define also the subspace 
$$\D_0(\Omega):=\left\{f:\Omega\rightarrow\bc: f\mbox{ is absolutely continuous on each $J_i$, $f(\alpha_i+)=f(\beta_i-)=0$ for all $i$ and $f'\in L^2(\Omega)$ } \right\}.$$
\end{definition}

\begin{proposition}\label{pr1.2}
The operator $\Ds$ is symmetric. The adjoint $\Ds^*$ has domain 
\begin{equation}
\Dmax:=\operatorname*{domain}(\Ds^*)=\left\{ f:\Omega\rightarrow\bc : f\mbox{ is absolutely continuous on each interval $J_i$, $f'\in L^2(\Omega)$} \right\}
\label{eq1.2.1}
\end{equation}

and
\begin{equation}
\Ds^*f=\frac1{2\pi i }f'\mbox{ for }f\in\operatorname*{domain}(D^*).
\label{eq1.2.2}
\end{equation}
The operator $\Ds$ on $C_c^\infty(\Omega)$ has a closed extension to $\D_0(\Omega)$ and, for $f$ in $\D_0(\Omega)$, we also have $\Ds f=\frac1{2\pi i}f'$. The adjoint of the operator $\restr{\Ds}{\D_0(\Omega)}$ is the same as described above. The adjoint of $\Ds^*$ is
\begin{equation}
\left(\restr{\Ds}{C_c^\infty(\Omega)}\right)^{**}=\restr{\Ds}{\D_0(\Omega)}=\restr{\Ds^*}{\D_0(\Omega)}.
\label{eq1.2.3}
\end{equation} 
\end{proposition}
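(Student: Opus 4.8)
The plan is to reproduce, in this concrete setting, the standard minimal/maximal dichotomy for $\frac{d}{dx}$, organized into four moves: (a) symmetry of $\Ds$; (b) identification of $\domain(\Ds^*)$ with $\Dmax$; (c) the fact that $\Ds$ is closable with closure $\Ds^{**}=(\Ds^*)^*$; and (d) the identification of that closure with $\restr{\Ds}{\D_0(\Omega)}$. The organizing principle is that symmetry gives the graph chain $\Ds\subseteq\Ds^{**}\subseteq\Ds^*$, so once (b) is known every function in the domain of the closure automatically lies in $\Dmax$, and only the correct boundary conditions remain to be pinned down.

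For symmetry I would take $f,g\in C_c^\infty(\Omega)$ and integrate by parts on each component $J_i$. Since $f$ and $g$ vanish identically near each endpoint, every boundary contribution $[f\bar g]_{\alpha_i}^{\beta_i}$ drops out, and using $\overline{\frac{1}{2\pi i}}=-\frac{1}{2\pi i}$ the identity $\ip{\Ds f}{g}=\ip{f}{\Ds g}$ follows at once. To compute the adjoint I would unwind its definition: $f\in\domain(\Ds^*)$ with $\Ds^*f=h$ precisely when $\ip{\Ds\phi}{f}=\ip{\phi}{h}$ for all $\phi\in C_c^\infty(\Omega)$, which says exactly that the distributional derivative of $f$ on $\Omega$ equals $2\pi i\,h\in L^2(\Omega)$. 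The regularity input—that a function on an interval whose weak derivative is locally integrable is absolutely continuous, with weak derivative equal a.e. to the classical one—then forces $f$ to be absolutely continuous on each $J_i$ with $f'\in L^2(\Omega)$ and $\Ds^*f=\frac{1}{2\pi i}f'$; the reverse inclusion is again the integration-by-parts computation, now legitimate for absolutely continuous $f$ against compactly supported $\phi$. The decisive point is that the test functions vanish near $\partial\Omega$, so \emph{no} boundary conditions are imposed and $\domain(\Ds^*)=\Dmax$ exactly as stated.

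Since $C_c^\infty(\Omega)$ is dense in $L^2(\Omega)$, $\Ds$ is densely defined, hence closable with closure $\Ds^{**}=(\Ds^*)^*$. To identify this I would compute the adjoint of the maximal operator via the boundary form: for $f,g\in\Dmax$, integration by parts for absolutely continuous functions yields $\ip{\Ds^*f}{g}-\ip{f}{\Ds^*g}=\frac{1}{2\pi i}\int_{\partial\Omega}f\bar g$. By the chain $\Ds^{**}\subseteq\Ds^*$ any $g\in\domain((\Ds^*)^*)$ already lies in $\Dmax$, so the defining condition collapses to the requirement that this boundary form vanish for every $f\in\Dmax$. Because the boundary-value map $\Dmax\to\bc^{2n}$, $f\mapsto(f(\alpha_i+),f(\beta_i-))_i$, is surjective (realize arbitrary prescribed endpoint values by linear interpolation or a smooth cutoff on each $J_i$), the vanishing forces $g(\alpha_i+)=g(\beta_i-)=0$ for all $i$, i.e. $g\in\D_0(\Omega)$. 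Hence $(\Ds^*)^*=\restr{\Ds^*}{\D_0(\Omega)}=\restr{\Ds}{\D_0(\Omega)}$, which is therefore the closure of $\restr{\Ds}{C_c^\infty(\Omega)}$; and since an operator and its closure share the same adjoint, the adjoint of $\restr{\Ds}{\D_0(\Omega)}$ is again $\Ds^*$, giving \eqref{eq1.2.3}.

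I expect the only genuine analytic obstacle to be the regularity step in move (b): the passage from ``distributional derivative in $L^2$'' to ``absolutely continuous with the stated classical derivative,'' together with the attendant fact that the one-sided limits $f(\alpha_i+)$, $f(\beta_i-)$ exist (so that $\D_0(\Omega)$ is well defined). Everything else is bookkeeping with adjoints and the boundary form, the one subtlety there being to verify carefully that the $2n$ endpoint values are independently prescribable, so that the vanishing of the boundary form really does impose all $2n$ homogeneous conditions rather than some proper subspace of them.
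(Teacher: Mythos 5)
Your proposal is correct, and while move (b) matches the paper in substance, your identification of the closure takes a genuinely different route. On (b): you invoke the standard regularity lemma (weak derivative in $L^1_{\mathrm{loc}}$ implies an absolutely continuous representative) as a black box, whereas the paper proves it inline, showing by hand that the orthogonal complement of $\operatorname{ran}(\Ds)$ consists of functions constant on each $J_i$ (a du Bois-Reymond--type argument) and deducing that $\frac{1}{2\pi i}g$ differs from an antiderivative of $\Ds^*g$ by a constant on each component; same content, different packaging. The real divergence is in (c)--(d): the paper proves directly, by a sequential graph-limit argument (using $\ip{f_n'}{\chi_{(\alpha_i,x)}}\to\ip{g}{\chi_{(\alpha_i,x)}}$ to control pointwise values and boundary limits), that $\restr{\Ds}{\D_0(\Omega)}$ is closed, asserts that its adjoint is again $\Ds^*$ ``by the same arguments,'' and then gets \eqref{eq1.2.3} from $A^{**}=A$ for closed $A$; you instead compute $(\Ds^*)^*$ head-on via the boundary form $2\pi i\left(\ip{\Ds^*f}{g}-\ip{f}{\Ds^*g}\right)=\ip{f(\beta)}{g(\beta)}_{\bc^n}-\ip{f(\alpha)}{g(\alpha)}_{\bc^n}$ --- which the paper only establishes later, as Lemma \ref{lem1.10} in the proof of Theorem \ref{th1.7} --- together with surjectivity of the trace map $\Dmax\to\bc^{2n}$ and the containment $\Ds^{**}\subseteq\Ds^*$. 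Your route buys an explicit, checkable replacement for the paper's ``same arguments'' shortcut and avoids any sequential closedness proof, at the price of leaning on the general adjoint calculus ($\overline{A}=A^{**}$ for densely defined closable $A$); the paper's version is more elementary and self-contained. One small elision on your side: when you say the adjoint condition for $g\in\domain((\Ds^*)^*)$ ``collapses'' to vanishing of the boundary form, you should first test against $f\in C_c^\infty(\Omega)$, where the boundary term is absent, to conclude by density that the candidate vector $h$ with $\ip{\Ds^*f}{g}=\ip{f}{h}$ necessarily equals $\frac{1}{2\pi i}g'$; only then does the condition reduce to $\int_{\partial\Omega}f\cj{g}=0$ for all $f\in\Dmax$. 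That step is routine, so it is an omission of detail rather than a gap.
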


\begin{proof}
Let $f\in C_c^\infty(\Omega)$ and $g\in\domain(\Ds^*)$. Then $\ip{\Ds f}{g}=\ip{f}{\Ds^*g}$ which means that 
$$\frac{1}{2\pi i }\int_\Omega f'(x)\cj g(x)\,dx=\int f(x)\cj{\Ds^*g}(x)\,dx.$$
Define $\varphi(x)=\int_{\alpha_i}^x\Ds^*g(t)\,dt$ for all $x\in J_i$, $i\in\{1,\dots,n\}$. Then $\varphi$ is absolutely continuous and $\varphi'(x)=\Ds^*g(x)$ for a.e. $x\in\Omega$. Then, using an integration by parts we have
$$\frac{1}{2\pi i }\int_\Omega f'(x)\cj g(x)\,dx=\int_\Omega f(x)\cj{\varphi'}(x)\,dx=\int_{\partial\Omega}f\cj\varphi-\int_\Omega f'(x)\cj\varphi(x)\,dx=-\int_\Omega f'(x)\cj\varphi(x)\,dx,$$
since $f|_{\partial\Omega}=0$.

Then 
$$\int_\Omega f'(x)\left(-\frac{1}{2\pi i}g(x)+\varphi(x)\right)\,dx=0\mbox{ for all }f\in C_c^\infty(\Omega).$$
This means that the function $\varphi-\frac{1}{2\pi i }g$ is orthogonal to the range of the operator $\Ds$. 

Next, we find the orthogonal complement of the range of $\Ds$. Note that if $f\in C_c^\infty(\Omega)$ then $f|_{\partial\Omega}=0$ and $f(x)=\int_{\alpha_i}^xf'(t)\,dt$ for all $x\in J_i$, which implies that $\int_{\alpha_i}^{\beta_i}f'(x)\,dx=f(\beta_i)=0$. This means that every function which is constant on each interval $J_i$ is orthogonal to the range of $\Ds$.

Conversely, let $g\in L^2(\Omega)$ be orthogonal to the range of $\Ds$. Fix $i\in\{1,\dots,n\}$. 
If $f$ is in $C_c^\infty(J_i)$ and $\int_{\alpha_i}^{\beta_i}f(x)\,dx=0$, then define $\varphi(x)=2\pi i \int_{\alpha_i}^xf(t)\,dt$ for $x\in J_i$, zero otherwise. Then, $\varphi\in C_c^\infty(\Omega)$ and $\Ds\varphi=f$. 
So the range of $\Ds$ contains all functions in $C_c^\infty(J_i)$ with $\int_{\alpha_i}^{\beta_i}f(x)\,dx=0$. Any function $f\in L^2(\Omega)$ which is supported on $J_i$ and with $\int_{\alpha_i}^{\beta_i}f(t)\,dt=0$ can be approximated in $L^2(\Omega)$ by a sequence functions $f_n$ in $C_c^\infty(J_i)$ with $\int_{\alpha_i}^{\beta_i}f_n(t)\,dt=0$ and these are in the range of $\Ds$. Then, we have that $\ip{g}{f-\frac{1}{\beta_i-\alpha_i}\int_{\alpha_i}^{\beta_i}f(t)\,dt}=0$ for any $f\in L^2(\Omega)$ which is supported on $J_i$. Therefore
$$\int_\Omega g(x)\cj f(x)\,dx=\frac{1}{\beta_i-\alpha_i}\int_\Omega g(x)\,dx\cdot\int_\Omega \cj f(x)\,dx$$
and this implies that 
$g-\frac{1}{\beta_i-\alpha_i}\int_{\alpha_i}^{\beta_i}g(x)\,dx$ is orthogonal to all $f\in L^2(\Omega)$ which are supported on $J_i$. This proves that $g=\frac{1}{\beta_i-\alpha_i}\int_{\alpha_i}^{\beta_i}g(x)\,dx$ a.e. on $J_i$. 
Thus $g$ is constant on each interval $J_i$.

Returning to our computation of $\domain(\Ds^*)$ we obtain that $\varphi-\frac{1}{2\pi i}g$ is constant on each $J_i$. But then $g$ is absolutely continuous on each $J_i$ and $\frac{1}{2\pi i}g'=\varphi'=\Ds^*g$ a.e. on $\Omega$.

Conversely, if $g$ is absolutely continuous on the intervals $J_i$ and $g'\in L^2(\Omega)$ then the integration by parts used above shows that $\ip{\Ds f}{g}=\ip{f}{\frac{1}{2\pi i}g'}$, so $g\in\domain(\Ds^*)$ and $\Ds^*g=\frac{1}{2\pi i}g'$.

Next, we prove that the operator $\Ds$ on $\D_0(\Omega)$ is closed. Take $f_n$ in $\D_0(\Omega)$ which converges to some $f$ in $L^2(\Omega)$ and such that $f_n'$ converges to $g$ in $L^2(\Omega)$. Define 
$\varphi(x)=\int_{\alpha_i}^x g(t)\,dt=\ip{g}{\chi_{(\alpha_i,x)}}$ for $x\in J_i$. Then for all $x\in\Omega$, $\varphi(x)$ is the limit of $\ip{f_n}{\chi_{(\alpha_i,x)}}=\int_{\alpha_i}^xf_n(t)=f_n(x)-f_n(\alpha_i+)=f_n(x)$. 
Since $f_n$ converges to $f$ in $L^2(\Omega)$ we get that $f=\varphi$ a.e. on $\Omega$. This implies that $f$ is absolutely continuous and $f'=\varphi'=g$ a.e. on $\Omega$. Since $\varphi(\alpha_i+)=0$ it follows that $f(\alpha_i+)=0$. We also have 
$$f(\beta_i-)=\varphi(\beta_i-)=\int_{\alpha_i}^{\beta_i}g(t)\,dt=\lim_n \int_{\alpha_i}^{\beta_i}f_n'(t)\,dt=\lim_n(f_n(\beta_i-)-f(\alpha_i+))=0.$$

To prove that the adjoint of $\restr{\Ds}{\D_0(\Omega)}$ is as before, the same arguments can be used. Since $A=\restr{\Ds}{\D_0(\Omega)}$ is closed, $A^{**}=A$ \cite[Corollary 1.8, page 305]{Con90}.

\end{proof}

\begin{definition}\label{def1.3}
The deficiency spaces are defined by 
$$\mathcal L_+=\operatorname*{ker}(\Ds^*-iI)=\operatorname*{ran}(\Ds+iI)^{\perp},\quad \mathcal L_-=\operatorname*{ker}(\Ds^*+iI)=\operatorname*{ran}(\Ds-iI)^{\perp}.$$
The deficiency indices of $\Ds$ are $n_{\pm}=\operatorname*{dim}\mathcal L_{\pm}$.
\end{definition}

\begin{proposition}\label{pr1.4}
Define the numbers 
$$\gamma_i^+=\sqrt{\frac{4\pi}{e^{-4\pi \alpha_i}-e^{-4\pi \beta_i}}},\quad \gamma_i^-=\sqrt{\frac{4\pi}{e^{4\pi \beta_i}-e^{4\pi \alpha_i}}}$$
The functions $\{\gamma_i^+e^{-2\pi t}\chi_{J_i}(t) : i\in\{1,\dots,n\}\}$ form an orthonormal basis for $\mathcal L_+$ and the functions $\{\gamma_i^-e^{2\pi t}\chi_{J_i}(t) : i\in\{1,\dots,n\}\}$ form an orthonormal basis for $\mathcal L_-$. Therefore the deficiency indices are both equal to $n$, $n_+=n_-=n$. 

Every function $f$ in $\Dmax$ can be written uniquely as 
$$f(t)=f_0(t)+\sum_{i=1}^nc_i e^{-2\pi t}\chi_{J_i}(t)+\sum_{i=1}^nd_ie^{2\pi t}\chi_{J_i}(t),\quad(t\in\Omega)$$
for some $f_0\in\D_0(\Omega)$, $c_i,d_i\in \bc$, $i\in\{1,\dots,n\}$. 
\end{proposition}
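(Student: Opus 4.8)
The plan is to reduce everything to a first-order constant-coefficient ODE on each component $J_i$, followed by one elementary $2\times2$ linear-algebra computation; there is no deep analytic content here.

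First I would identify $\mathcal L_+$ and $\mathcal L_-$ explicitly using the description of $\Ds^*$ from Proposition \ref{pr1.2}. A function $f\in\Dmax$ lies in $\mathcal L_+=\ker(\Ds^*-iI)$ exactly when $\frac1{2\pi i}f'=if$, i.e. $f'=-2\pi f$ on each $J_i$; likewise $f\in\mathcal L_-$ iff $f'=2\pi f$. Since $\Dmax$ imposes no boundary conditions, the solution on each interval is an arbitrary scalar multiple of $e^{-2\pi t}$ (resp. $e^{2\pi t}$), and the coefficients on different intervals are independent. Hence $\mathcal L_+=\operatorname{span}\{e^{-2\pi t}\chi_{J_i}\}_{i=1}^n$ and $\mathcal L_-=\operatorname{span}\{e^{2\pi t}\chi_{J_i}\}_{i=1}^n$, each of dimension $n$, which already gives $n_+=n_-=n$. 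For orthonormality I note that the functions $e^{-2\pi t}\chi_{J_i}$ have pairwise disjoint supports, so they are mutually orthogonal (same for the $e^{2\pi t}\chi_{J_i}$), and a direct integration $\|e^{-2\pi t}\chi_{J_i}\|^2=\int_{\alpha_i}^{\beta_i}e^{-4\pi t}\,dt=\frac{e^{-4\pi\alpha_i}-e^{-4\pi\beta_i}}{4\pi}$ shows that the stated $\gamma_i^+$ is precisely the reciprocal norm; the computation for $\gamma_i^-$ is identical.

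For the decomposition of a general $f\in\Dmax$, observe first that $f$ is absolutely continuous on each $J_i$ with $f'\in L^2(J_i)\subset L^1(J_i)$ (bounded interval), so the one-sided boundary values $f(\alpha_i+)$ and $f(\beta_i-)$ exist. On $J_i$ I would then choose $c_i,d_i$ so that $f-c_ie^{-2\pi t}-d_ie^{2\pi t}$ vanishes at both endpoints; this is the linear system with coefficient matrix having rows $(e^{-2\pi\alpha_i},e^{2\pi\alpha_i})$ and $(e^{-2\pi\beta_i},e^{2\pi\beta_i})$ and right-hand side $(f(\alpha_i+),f(\beta_i-))$. Its determinant equals $e^{2\pi(\beta_i-\alpha_i)}-e^{-2\pi(\beta_i-\alpha_i)}\neq0$ because $\beta_i>\alpha_i$, so $(c_i,d_i)$ exists and is unique. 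Setting $f_0:=f-\sum_i c_ie^{-2\pi t}\chi_{J_i}-\sum_i d_ie^{2\pi t}\chi_{J_i}$ then yields a function that is absolutely continuous on each $J_i$, has $L^2$ derivative, and vanishes at every endpoint, i.e. $f_0\in\D_0(\Omega)$; the same nonvanishing determinant forces uniqueness of the representation.

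I expect no serious obstacle. The only points needing care are the existence of the one-sided boundary limits (which follows from $f'\in L^1$ on each bounded $J_i$) and the nonvanishing of the interval Wronskian $e^{2\pi(\beta_i-\alpha_i)}-e^{-2\pi(\beta_i-\alpha_i)}$, both elementary. This direct argument is moreover consistent with von Neumann's abstract decomposition $\Dmax=\overline{\domain(\Ds)}\oplus\mathcal L_+\oplus\mathcal L_-$, with $\overline{\domain(\Ds)}=\D_0(\Omega)$ by Proposition \ref{pr1.2}.
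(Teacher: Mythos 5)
Your proof is correct and takes essentially the same route as the paper: the paper likewise identifies $\mathcal L_{\pm}$ by solving $f'=\mp 2\pi f$ on each component $J_i$, gets orthogonality from the disjoint supports, and computes the $L^2$-norms to be $1/\gamma_i^{\pm}$. For the decomposition, the paper's primary justification is the general von Neumann formula $\mathcal{D}_{\mathrm{max}}=\mathcal{D}_0(\Omega)\oplus\mathcal L_+\oplus\mathcal L_-$ cited from Dunford--Schwartz, but it explicitly remarks that one can instead argue directly by matching endpoint values, which is precisely the $2\times 2$ Wronskian argument you carried out in full, with the correct nonvanishing determinant $e^{2\pi(\beta_i-\alpha_i)}-e^{-2\pi(\beta_i-\alpha_i)}$ handling both existence and uniqueness.
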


\begin{proof}
We have $f\in \mathcal L_+$ iff $f$ is absolutely continuous with $f'\in L^2(\Omega)$ and $\frac{1}{2\pi i}f'=if$ on $\Omega$. Solving the differential equation on each interval $J_i$, we obtain that $f$ must be of the form 
$$f(t)=\sum_{i=1}^nc_ie^{-2\pi t}\chi_{J_i}(t),\quad(t\in\Omega)$$
for some constants $c_i\in\bc$. The functions $e^{-2\pi t}\chi_{J_i}(t)$ are obviously orthogonal and their $L^2$-norms are computed to be $1/\gamma_i^+$. This implies the first statement for $\mathcal L_+$ and $\mathcal L_-$ can be analyzed similarly. 

The last statement follows from the general theory (see \cite{DuSc2}): any element $f$ in the domain of $\Ds^*$ can be written as $f=f_0+f_++f_-$ with $f$ in the domain of the closed symmetric operator $\restr{\Ds}{\D_0(\Omega)}$, $f_+$ in $\mathcal L_+$ and $f_-$ in $\mathcal L_-$. It can be proved also directly, by arranging the constants $c_i$ and $d_i$ so that the values at the endpoints $\alpha_i$ and $\beta_i$ are matched. 
\end{proof}

\begin{theorem}\label{th1.5}
For every self-adjoint extension $A$ of $\restr{\Ds}{C_c^\infty(\Omega)}$ there exists a unique unitary $W$ from $\mathcal L_+$ to $\mathcal L_-$ such that $A$ is the restriction of $\restr{\Ds^*}{\Dmax}$ to 
\begin{equation}
\D_W=\left\{f+f_++Wf_+ : f\in\D_0(\Omega) , f_+\in\mathcal L_+\right\}.
\label{eq1.5.1}
\end{equation}
Conversely, for any unitary $W$ from $\mathcal L_+$ onto $\mathcal L_-$ the restriction of $\restr{\Ds}{\Dmax}$ to $\D_W$ is a self-adjoint extension of $\restr{\Ds}{C_c^\infty(\Omega)}$.
\end{theorem}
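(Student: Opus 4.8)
The plan is to invoke the abstract von Neumann theory of selfadjoint extensions, which this statement is essentially a concrete instance of. First I would recall the standard setup: for a densely defined closed symmetric operator $S$ with deficiency spaces $\mathcal L_\pm = \ker(S^* \mp iI)$, von Neumann's theorem states that selfadjoint extensions are in bijective correspondence with unitaries $W : \mathcal L_+ \to \mathcal L_-$, via the formula $\D_W = \{f + f_+ + Wf_+ : f \in \domain(S), f_+ \in \mathcal L_+\}$. Here I take $S = \restr{\Ds}{\D_0(\Omega)}$, which by Proposition \ref{pr1.2} is the closure of $\restr{\Ds}{C_c^\infty(\Omega)}$ and hence has the same selfadjoint extensions; its adjoint is $\restr{\Ds^*}{\Dmax}$, and Proposition \ref{pr1.4} identifies the deficiency spaces $\mathcal L_\pm$ and gives the direct-sum decomposition $\Dmax = \D_0(\Omega) \oplus \mathcal L_+ \oplus \mathcal L_-$ (the von Neumann formula).

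For the forward direction, given a selfadjoint extension $A$, I would use that $\domain(A) \subset \Dmax$ together with the fact that $A$ is selfadjoint (so $A = A^*$ and $\restr{\Ds^*}{\domain(A)^\perp}$-type orthogonality conditions hold in the graph inner product). The cleanest route is to equip $\Dmax$ with the graph inner product $\ip{f}{g}_{\Ds^*} = \ip{f}{g} + \ip{\Ds^*f}{\Ds^*g}$, under which $\mathcal L_+$ and $\mathcal L_-$ become orthogonal closed subspaces and $\domain(S)$ is their joint orthogonal complement. A selfadjoint extension corresponds exactly to a subspace on which the boundary form $\ip{\Ds^* f}{g} - \ip{f}{\Ds^* g}$ vanishes and which is maximal with this property; translating this vanishing condition through the decomposition $f = f_0 + f_+ + f_-$ forces $f_- = Wf_+$ for an isometry $W$, and maximality plus the equal deficiency indices $n_+ = n_- = n$ force $W$ to be unitary from all of $\mathcal L_+$ onto $\mathcal L_-$. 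Uniqueness of $W$ follows since the decomposition $f = f_0 + f_+ + f_-$ is unique, so $f_+ \mapsto f_-$ is a well-defined map determined by $\domain(A)$.

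For the converse, given any unitary $W : \mathcal L_+ \to \mathcal L_-$, I would define $\D_W$ by \eqref{eq1.5.1} and verify directly that $\restr{\Ds^*}{\D_W}$ is symmetric and then selfadjoint. Symmetry reduces to showing the boundary form vanishes on $\D_W$: for $f = f_0 + f_+ + Wf_+$ and $g = g_0 + g_+ + Wg_+$, one computes $\ip{\Ds^* f}{g} - \ip{f}{\Ds^* g}$ using $\Ds^* f_\pm = \pm i f_\pm$, and the isometry property $\ip{Wf_+}{Wg_+} = \ip{f_+}{g_+}$ makes the cross terms cancel. To upgrade symmetry to selfadjointness I would check that $\D_W$ is maximal, e.g., by a dimension count: any proper symmetric extension would correspond to enlarging $\D_W$ within $\Dmax$, but the graph $\ip{\cdot}{\cdot}_{\Ds^*}$-orthogonal complement of $\D_W$ inside $\Dmax$ is already trivial once $W$ is unitary, since $\dim \mathcal L_+ = \dim \mathcal L_- = n < \infty$.

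The main obstacle, and the step deserving the most care, is the correct handling of the boundary form and the reduction of the selfadjointness condition to the isometry/unitarity of $W$; everything else is bookkeeping with the von Neumann decomposition. Because $n_+ = n_- = n$ is finite, the delicate functional-analytic points about maximality are tame, and one can alternatively just cite the general theorem (e.g.\ \cite{DuSc2}, already referenced in Proposition \ref{pr1.4}) and note that the present operator satisfies its hypotheses, with Propositions \ref{pr1.2} and \ref{pr1.4} supplying the explicit deficiency data.
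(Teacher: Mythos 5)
Your proposal is correct and takes essentially the same route as the paper: reduce via Proposition \ref{pr1.2} to the closed symmetric operator $\restr{\Ds}{\D_0(\Omega)}$ (whose selfadjoint extensions coincide with those of $\restr{\Ds}{C_c^\infty(\Omega)}$) and then invoke the classical von Neumann extension theorem, which the paper cites from \cite{Con90} where you cite \cite{DuSc2} or sketch the standard boundary-form argument. One cosmetic slip in your optional sketch: the graph-orthogonal complement of $\D_W$ in $\Dmax$ is not trivial but is the $n$-dimensional space $\left\{-W^*g_-+g_- : g_-\in\mathcal L_-\right\}$; the correct maximality check is that every $g\in\domain\bigl((\restr{\Ds^*}{\D_W})^*\bigr)$ decomposes as $g_0+g_++g_-$ with $g_-=Wg_+$ and hence lies in $\D_W$ --- though since you also offer the citation of the general theorem, exactly as the paper does, this does not affect correctness.
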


\begin{proof}
Since the closure of $\restr{\Ds}{C_c^\infty(\Omega)}$ is the closed symmetric operator $\restr{\Ds}{\D_0(\Omega)}$, any self-adjoint extension of the first operator will be an extension of the latter. Then, the rest follows from \cite[Theorem 2.17 page 321, Theorem 2.20 page 320]{Con90}.
\end{proof}

\begin{definition}\label{def1.6}
For a function $f$ in $\Dmax$ we define the vectors 
$$f(\alpha)=\left(\begin{array}{c}
f(\alpha_1)\\
f(\alpha_2)\\
\vdots\\
f(\alpha_n)
\end{array}\right),
\quad
f(\beta)=\left(\begin{array}{c}
f(\beta_1)\\
f(\beta_2)\\
\vdots\\
f(\beta_n)
\end{array}\right)
$$
\end{definition}

\begin{theorem}\label{th1.7}
For every self-adjoint extension $A$ of  $\restr{\Ds}{C_c^\infty(\Omega)}$ there exist a unique $n\times n$ unitary matrix $B$ such that $A$ is the restriction of $\restr{\Ds^*}{\Dmax}$ to 
\begin{equation}
\D_B:=\left\{f\in\Dmax : Bf(\alpha)=f(\beta)\right\}.
\label{eq1.7.1}
\end{equation}
Conversely for any unitary matrix $B$, the restriction of $\restr{\Ds}{\Dmax}$ to $\D_B$ is a self-adjoint extension of $\restr{\Ds}{C_c^\infty(\Omega)}$.

\end{theorem}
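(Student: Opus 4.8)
The plan is to reduce the statement to linear algebra on the boundary data space $\bc^{n}\oplus\bc^{n}$ by means of a boundary form, so that self-adjoint extensions correspond exactly to Lagrangian (maximal isotropic) subspaces, and these in turn are precisely graphs of unitary matrices.

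First I would record Green's identity. Integrating by parts on each interval $J_i$ and summing over $i$, for $f,g\in\Dmax$ one obtains
\[
\ip{\Ds^{*}f}{g}-\ip{f}{\Ds^{*}g}=\frac{1}{2\pi i}\int_{\partial\Omega}f\cj g=\frac{1}{2\pi i}\Big(\ip{f(\beta)}{g(\beta)}_{\bc^{n}}-\ip{f(\alpha)}{g(\alpha)}_{\bc^{n}}\Big),
\]
with $f(\alpha),f(\beta)\in\bc^{n}$ the boundary vectors of Definition \ref{def1.6}. Thus the failure of symmetry is measured by the Hermitian form $\omega\big((a,b),(a',b')\big)=\ip{b}{b'}-\ip{a}{a'}$ of signature $(n,n)$ on $\bc^{n}\oplus\bc^{n}$.

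Next I would study the boundary map $\rho\colon\Dmax\to\bc^{n}\oplus\bc^{n}$, $\rho(f)=(f(\alpha),f(\beta))$. By construction $\ker\rho=\D_0(\Omega)$, and $\rho$ is onto (interpolate prescribed endpoint values by an absolutely continuous function on each $J_i$), so $\rho$ induces an isomorphism $\Dmax/\D_0(\Omega)\cong\bc^{2n}$, matching the count in Proposition \ref{pr1.4}. If $A$ is any self-adjoint extension of $\restr{\Ds}{C_c^\infty(\Omega)}$ then $\D_0(\Omega)=\ker\rho\subseteq\domain(A)\subseteq\Dmax$ and $A=\restr{\Ds^{*}}{\domain(A)}$ (since $A=A^{*}\subseteq\Ds^{*}$ by Proposition \ref{pr1.2}), so $\domain(A)=\rho^{-1}(L)$ for $L:=\rho(\domain A)$. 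Using Green's identity I would then compute the adjoint of the restriction $A_{L}:=\restr{\Ds^{*}}{\rho^{-1}(L)}$ attached to an arbitrary subspace $L$: its domain is $\rho^{-1}(L^{\perp_\omega})$, where $L^{\perp_\omega}$ is the $\omega$-orthogonal complement. Hence $A_L$ is self-adjoint if and only if $L=L^{\perp_\omega}$, i.e. $L$ is Lagrangian for $\omega$.

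It remains to identify the Lagrangian subspaces $L=L^{\perp_\omega}$; since $\omega$ is nondegenerate these have dimension $n$. This is the decisive and cleanest step: if $(0,b)\in L$ then $0=\omega\big((0,b),(0,b)\big)=\|b\|^{2}$, so the projection $L\to\bc^{n}$, $(a,b)\mapsto a$, is injective, hence (as $\dim L=n$) a bijection, and $L=\{(a,Ba):a\in\bc^{n}\}$ is the graph of a unique linear map $B$. Isotropy of this graph reads $\|Ba\|=\|a\|$ for all $a$, forcing $B$ unitary; conversely every unitary $B$ yields a Lagrangian graph. Translating through $\rho$, the Lagrangian $L$ with $\domain(A)=\rho^{-1}(L)$ equals $\{(a,Ba)\}$ precisely when $f(\beta)=Bf(\alpha)$ on $\domain(A)$, i.e. $\domain(A)=\D_B$, which gives existence and uniqueness of $B$; the converse direction is the same chain of equivalences read backwards. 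The main obstacle is establishing ``self-adjoint $\Leftrightarrow$ $L$ Lagrangian''; I expect to get it from the adjoint-domain computation above (which only uses Green's identity and $A^{*}\subseteq\Ds^{*}$), optionally cross-checking against the unitary $W$ of Theorem \ref{th1.5}, whose graph $\rho(\D_W)$ is automatically $n$-dimensional and isotropic.
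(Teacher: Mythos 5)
Your proposal is correct, and it reaches the theorem by a genuinely different route than the paper. The paper works inside the Dunford--Schwartz theory of boundary values: its Lemma \ref{lem1.9} shows the $2n$ evaluation functionals $f\mapsto f(\alpha_i)$, $f\mapsto f(\beta_i)$ span the space of boundary values, and it then invokes the cited theorem that self-adjoint extensions are exactly the restrictions of $\Ds^*$ cut out by symmetric families of $n$ linearly independent boundary conditions; the forward direction verifies symmetry of the conditions $f(\beta_i)-\sum_j B_{ij}f(\alpha_j)=0$ via the Green's identity of Lemma \ref{lem1.10}, and the converse writes an arbitrary symmetric family as $Cf(\alpha)+Df(\beta)=0$, proves $C$ and $D$ nonsingular by a test-function argument, and extracts $B=-D^{-1}C$ unitary. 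You instead make the argument self-contained: the boundary map $\rho(f)=(f(\alpha),f(\beta))$ is onto $\bc^{2n}$ with kernel $\D_0(\Omega)$, the adjoint computation $\domain(A_L^*)=\rho^{-1}(L^{\perp_\omega})$ (valid since $\domain(A_L)\supseteq C_c^\infty(\Omega)$ is dense and $A_L^*\subseteq\Ds^*$ by Proposition \ref{pr1.2}) turns self-adjointness into the Lagrangian condition $L=L^{\perp_\omega}$ for the signature-$(n,n)$ form, and the definiteness argument $(0,b)\in L\Rightarrow\|b\|^2=0$ identifies Lagrangians with graphs of unitaries --- this last step is the structural analogue of the paper's nonsingularity of $C$ and $D$, and it delivers existence and uniqueness of $B$ in one stroke. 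Both proofs pivot on the identical Green's identity; what the paper's approach buys is brevity by outsourcing the extension theory to the literature, while yours buys a transparent dictionary (extensions $\leftrightarrow$ Lagrangian subspaces) at the cost of proving the adjoint-domain formula yourself, which your sketch does correctly.
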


\begin{proof}
Before we prove the theorem we recall some definitions, see \cite[Definition 20 page 1234, Definition 25 page 1235]{DuSc2}.

\begin{definition}\label{def1.8}
A {\it boundary value} for the operator $\Ds$ is a continuous linear functional on the Hilbert space $\Dmax$ with the {\it graph inner product} 
$$\ip{f}{g}_{\gr}=\ip{f}{g}+\ip{\Ds f}{\Ds g},\quad(f,g\in \Dmax),$$
which vanishes on $\D_0$. If $\varphi$ is a boundary value for $\Ds$ then the equation $\varphi(f)=0$ is called a {\it boundary condition}. A set of boundary conditions $\varphi_i(f)=0$, $i=1,\dots,k$, is said to be {\it linearly independent} if the boundary values $\varphi_1,\dots, \varphi_k$ are linearly independent. A set of boundary conditions $\varphi_i(f)=0$, $i=1,\dots, k$, is said to be {\it symmetric} if, for $f,g\in\Dmax$, the equations $\varphi_i(f)=\varphi_i(g)=0$, $i=1,\dots,k$ imply the equation 
\begin{equation}
\ip{\Ds^*f}{g}=\ip{f}{\Ds^*g}.
\label{eq1.8.2}
\end{equation}
\end{definition}

We start with a lemma.

\begin{lemma}\label{lem1.9}
The linear functionals $f\mapsto f(\alpha_i)$, $f\mapsto f(\beta_i)$, $i=1,\dots, n$, $f\in\Dmax$, are linearly independent boundary values and span the space of boundary values.
\end{lemma}

\begin{proof}
First we have to make sure these functions are well defined, i.e., $f(\alpha_i)$ and $f(\beta_i)$ are well defined. Since $f$ is absolutely continuous, it is also uniformly continuous, therefore $\lim_{x\downarrow\alpha_i}f(x)$ exists and defines $f(\alpha_i)$. Similarly for $\beta_i$. 

Let $f_n\in\Dmax$, $f_n\rightarrow f\in\Dmax$ in the graph inner product. This means that $f_n\rightarrow f$ and $f_n'\rightarrow f'$ in $L^2(\Omega)$. Then for $x\in J_i$,
\begin{equation}
f_n(x)-f_n(\alpha_i)=\int_{\alpha_i}^xf_n'(x)\,dx=\ip{f_n'}{\chi_{(\alpha_i,x)}}\rightarrow\ip{f'}{\chi_{(\alpha_i,x)}}=f(x)-f(\alpha_i).
\label{eq1.9.1}
\end{equation}

Assume that $f_n(\alpha_i)$ does not converge to $f(\alpha_i)$. Taking a subsequence, we can assume that $f_n(\alpha_i)$ is bounded away from $f(\alpha_i)$. Since $f_n\rightarrow f$ in $L^2(\Omega)$ we can extract a subsequence which is convergent pointwise a.e. to $f$. Then take a point $x$ where this convergence holds and plug it in \eqref{eq1.9.1}. It follows that a subsequence of $f_n(\alpha_i)$ converges to $f(\alpha_i)$. The contradiction implies that $f_n(\alpha_i)$ converges to $f(\alpha_i)$. Similarly for $\beta_i$. So these functionals are continuous w.r.t the graph inner product. Obviously, they vanish on $\D_0$. 

To see that these boundary values are independent, we use \cite[Lemma 21, page 1234]{DuSc2}. Pick functions $f_i^+$ in $\Dmax$ with $f_i^+(\alpha_j)=\delta_{ij}$,  $f_i^+(\beta_j)=0$ and 
$f_i^-(\alpha_j)=0$, $f_i^-(\beta_j)=\delta_{ij}$, $i,j=1,\dots,n$. Then the $2n\times 2n$ matrix $(f_i^{\pm}(\alpha_i\mbox{ or }\beta_i))$ is a permutation of the identity matrix, so non-singular. 

The space of boundary values has dimension $n_++n_-=2n$ (\cite[Lemma21, page 1234]{DuSc2}), so these boundary values span the space of all boundary values.
\end{proof}

By \cite[Theorem 30, page 1238]{DuSc2} the self-adjoint extensions are the restrictions of $\Ds^*$ to the subspace of $\Dmax$ determined by a symmetric family of $n$ linearly independent condition. 

Given a unitary matrix $B$ we have to check the equations
$$A_i(f)=f(\beta_i)-\sum_{j=1}^nB_{ij}f(\alpha_j)=0,\quad i=1,\dots,n$$
form a symmetric family of linearly independent boundary conditions.

The fact that the linear functionals $A_i$ are boundary values follows from Lemma \ref{lem1.9}. To check that these are linearly independent pick $f_i\in\D_0$ with $f_i(\alpha_j)=\delta_{ij}$, $f(\beta_i)=0$, $i,j=1,\dots,n$. Then 
$A_i(f_j)=-B_{ij}$. So the matrix $(A_i(f_j))_{ij}$ is equal to $-B$, so it is not singular, therefore $A_i$, $i=1,\dots, n$ are linearly independent \cite[Lemma 21, page 124]{DuSc2}. To see that the family of boundary condition is symmetric we use the following Lemma:
\begin{lemma}\label{lem1.10}
For $f,g\in\Dmax$ 
$$2\pi i(\ip{\Ds^*f}{g}-\ip{f}{\Ds^*g})=\ip{f(\beta)}{g(\beta)}_{\bc^n}-\ip{f(\alpha)}{g(\alpha)}_{\bc^n}.$$
\end{lemma}

\begin{proof}
We use integration by parts:
$$2\pi i(\ip{\Ds^*f}{g}-\ip{f}{\Ds^*g})=\sum_{i=1}^n\left(\int_{\alpha_i}^{\beta_i}f'(x)\cj g(x)\,dx+\int_{\alpha_i}^{\beta_i}f(x)\cj g'(x)\,dx\right)$$
$$=\sum_{i=1}^n\left(\int_{\alpha_i}^{\beta_i}f'(x)\cj g(x)\,dx+f(\beta_i)\cj g(\beta_i)-f(\alpha_i)\cj g(\alpha_i)-\int_{\alpha_i}^{\beta_i}f'(x)\cj g(x)\,dx\right)=\ip{f(\beta)}{g(\beta)}_{\bc^n}-\ip{f(\alpha)}{g(\alpha)}_{\bc^n}.$$
\end{proof}

If we have $A_i(f)=A_i(g)=0$ then $f(\beta)=Bf(\alpha)$ and $g(\beta)=Bg(\alpha)$. Since $B$ is unitary this implies that $\ip{f(\alpha)}{g(\alpha)}_{\bc^n}=\ip{f(\beta)}{g(\beta)}_{\bc^n}$ and with Lemma \ref{lem1.10} this implies that $\ip{\Ds^*f}{g}=\ip{f}{\Ds^*f}$, so the family of boundary conditions is symmetric.

Conversely, if we have a self-adjoint extension, then this is the restriction of $\Ds^*$ to the subspace of $\Dmax$ determined by a symmetric family of $n$ linearly independent boundary conditions $A_i(f)=0$, $i=1,\dots,n$. 

Since the space of boundary values is spanned by the evaluation functionals there exist $c_{ij},d_{ij}\in\bc$ such that for all $f\in \Dmax$,
$$A_i(f)=\sum_{j=1}^nc_{ij}f(\alpha_j)+\sum_{j=1}^nd_{ij}f(\beta_j),\quad i=1,\dots,n.$$

Denote by $A(f)$ the vector 

$$A(f)=\begin{pmatrix}
A_1(f)\\
A_2(f)\\
\vdots\\
A_n(f)
\end{pmatrix}$$
Then $A(f)=Cf(\alpha)+Df(\alpha)$. We prove that $C$ and $D$ are non-singular. Suppose $Cv=0$ for some $v\in\bc^n$. Pick $f$ in $\Dmax$ such that $f(\alpha)=v$ and $f(\beta)=0$. Then $A(f)=Cv+D\cdot0=0$. Since the boundary conditions are symmetric, this implies, using Lemma \ref{lem1.10}, that $\ip{f(\alpha)}{f(\alpha)}=\ip{f(\beta)}{f(\beta)}$ so $v=0$. Thus $C$ is non-singular. Similarly $D$ is non-singular. 

The boundary conditions are equivalent to $f(\beta)=-D^{-1}Cf(\alpha)$. We prove that the matrix $-D^{-1}C$ is unitary. Take $v_1,v_2\in\bc^n$. Pick functions $f,g\in \Dmax$ with $f(\alpha)=v_1$, $f(\beta)=-D^{-1}Cv_1$, 
$g(\alpha)=v_2$, $g(\beta)=-D^{-1}Cv_2$. Then $f$ and $g$ satisfy the boundary conditions and, by their symmetry and Lemma \ref{lem1.10}, we must have  $\ip{f(\alpha)}{g(\alpha)}=\ip{f(\beta)}{g(\beta)}$ which means that 
$\ip{v_1}{v_2}=\ip{-D^{-1}Cv_1}{-D^{-1}Cv_2}$. Since $v_1,v_2$ were arbitrary this means that $B:=-D^{-1}C$ is unitary.

\end{proof}

\begin{proposition}\label{pr2.10}
Let $A$ be a self-adjoint extension of $\Ds$ and let $B$ be the associated unitary boundary matrix as in Theorem \ref{th1.7}. Suppose there exist $\lambda_1,\dots,\lambda_n$ in $\br$ such that the functions $e_{\lambda_i}$ are in $\domain(A)$, $i=1,\dots,n$. Define the matrices 
\begin{equation}
M_{\alpha}=(e_{\lambda_j}(\alpha_i))_{i,j=1}^n,\quad M_\beta=(e_{\lambda_j}(\beta_i))_{i,j=1}^n.
\label{eq2.10.1.1}
\end{equation}

Then 
\begin{equation}
BM_\alpha=M_\beta.
\label{eq2.10.1.2}
\end{equation}
The set $\{e_{\lambda_j}\}_{j=1}^n$ is orthogonal in $L^2(\Omega)$.
\end{proposition}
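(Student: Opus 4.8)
The plan is to handle the two assertions separately: the identity \eqref{eq2.10.1.2} is essentially bookkeeping about columns of the boundary matrices, while the orthogonality follows from self-adjointness, packaged through Lemma \ref{lem1.10} and the unitarity of $B$.

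First I would unwind the hypothesis. Since $A$ is the restriction of $\Ds^*$ to $\D_B$ (Theorem \ref{th1.7}), the condition $e_{\lambda_j}\in\domain(A)=\D_B$ means, by \eqref{eq1.7.1}, that $B\,e_{\lambda_j}(\alpha)=e_{\lambda_j}(\beta)$, where $e_{\lambda_j}(\alpha)$ and $e_{\lambda_j}(\beta)$ are the boundary-value vectors of Definition \ref{def1.6}. The key observation is that the $j$-th column of $M_\alpha$ is precisely $e_{\lambda_j}(\alpha)=(e_{\lambda_j}(\alpha_1),\dots,e_{\lambda_j}(\alpha_n))^t$, and likewise the $j$-th column of $M_\beta$ is $e_{\lambda_j}(\beta)$. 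Hence the $n$ equalities $B\,e_{\lambda_j}(\alpha)=e_{\lambda_j}(\beta)$, $j=1,\dots,n$, say exactly that $B$ carries the $j$-th column of $M_\alpha$ to the $j$-th column of $M_\beta$; assembling these column identities into a single matrix equation yields $BM_\alpha=M_\beta$, which is \eqref{eq2.10.1.2}.

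For the orthogonality I would first record that each $e_{\lambda_j}$ is an eigenfunction: since $e_{\lambda_j}\in\Dmax$ and $\Ds^*e_{\lambda_j}=\frac1{2\pi i}e_{\lambda_j}'=\lambda_j e_{\lambda_j}$, and $A$ agrees with $\Ds^*$ on its domain, we have $Ae_{\lambda_j}=\lambda_j e_{\lambda_j}$ with $\lambda_j\in\br$. I would then apply Lemma \ref{lem1.10} to the pair $e_{\lambda_k},e_{\lambda_j}$: using reality of the frequencies, the left-hand side becomes $2\pi i(\lambda_k-\lambda_j)\ip{e_{\lambda_k}}{e_{\lambda_j}}$, while the right-hand side is $\ip{e_{\lambda_k}(\beta)}{e_{\lambda_j}(\beta)}_{\bc^n}-\ip{e_{\lambda_k}(\alpha)}{e_{\lambda_j}(\alpha)}_{\bc^n}$. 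Substituting $e_{\lambda}(\beta)=B\,e_{\lambda}(\alpha)$ and invoking unitarity of $B$, the two inner products on the right coincide, so the right-hand side vanishes. Thus $(\lambda_k-\lambda_j)\ip{e_{\lambda_k}}{e_{\lambda_j}}=0$, and for distinct frequencies $\lambda_k\neq\lambda_j$ this forces $\ip{e_{\lambda_k}}{e_{\lambda_j}}=0$.

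Every step is short, so I do not anticipate a genuine obstacle; the only point needing a word of care is the status of the $\lambda_j$. The argument establishes orthogonality exactly between exponentials of distinct frequencies, which is the intended meaning of the $n$-element set $\{e_{\lambda_j}\}$ being orthogonal (distinct indices giving distinct functions). Should one prefer an argument that avoids Lemma \ref{lem1.10} altogether, one can instead deduce orthogonality directly from \eqref{eq2.10.1.2}: unitarity of $B$ gives $M_\alpha^*M_\alpha=M_\beta^*M_\beta$, whose $(j,k)$ entry reads $\sum_i e^{2\pi i(\lambda_k-\lambda_j)\alpha_i}=\sum_i e^{2\pi i(\lambda_k-\lambda_j)\beta_i}$, and this is precisely the cancellation that makes $\int_\Omega e^{2\pi i(\lambda_k-\lambda_j)x}\,dx=0$ whenever $\lambda_k\neq\lambda_j$.
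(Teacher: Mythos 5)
Your proof is correct and follows the paper's argument essentially verbatim: the identity $BM_\alpha=M_\beta$ is read off column-by-column from the boundary condition $Be_{\lambda_j}(\alpha)=e_{\lambda_j}(\beta)$ of Theorem \ref{th1.7}, and orthogonality is obtained exactly as in the paper from Lemma \ref{lem1.10} combined with unitarity of $B$, yielding $2\pi i(\lambda_k-\lambda_j)\ip{e_{\lambda_k}}{e_{\lambda_j}}=0$. Your alternative closing remark is also sound, since for $\lambda_k\neq\lambda_j$ one has $\int_\Omega e^{2\pi i(\lambda_k-\lambda_j)x}\,dx=\frac{1}{2\pi i(\lambda_k-\lambda_j)}\sum_{i=1}^n\bigl(e^{2\pi i(\lambda_k-\lambda_j)\beta_i}-e^{2\pi i(\lambda_k-\lambda_j)\alpha_i}\bigr)$, which vanishes by comparing the $(j,k)$ entries of $M_\alpha^*M_\alpha=M_\beta^*M_\beta$, though the paper does not take this route.
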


\begin{proof}
Equation \eqref{eq2.10.1.2} follows from \eqref{eq1.7.1} applied to the functions $e_{\lambda_j}$. For $i\neq j$ we have, with Lemma \ref{lem1.10},
$$0=\ip{Be_{\lambda_i}(\alpha)}{Be_{\lambda_j}(\alpha)}_{\bc^n}-\ip{e_{\lambda_i}(\alpha)}{e_{\lambda_j}(\alpha)}_{\bc^n}=\ip{e_{\lambda_i}(\beta)}{e_{\lambda_j}(\beta)}_{\bc^n}-\ip{e_{\lambda_i}(\alpha)}{e_{\lambda_j}(\alpha)}_{\bc^n}$$
$$=2\pi i(\ip{\Ds^*e_{\lambda_i}}{e_{\lambda_j}}_{L^2(\Omega)}-\ip{e_{\lambda_i}}{\Ds^* e_{\lambda_j}}_{L^2(\Omega)})=2\pi i(\lambda_i-\lambda_j)\ip{e_{\lambda_i}}{e_{\lambda_j}}_{L^2(\Omega)}.$$
\end{proof}

\begin{proposition}\label{pr1.10}
The vector space $\Dmax$ with the inner product 
\begin{equation}
\ip{f}{g}_{\gr}=\ip{f}{g}_{L^2(\Omega)}+\ip{\Ds f}{\Ds g}_{L^2(\Omega)},\quad(f,g\in \Dmax)
\label{eq1.10.1}
\end{equation}
(called the graph inner product)
is a reproducing kernel Hilbert space, so for every $x\in\cj\Omega$, there exist a unique function $k_x$ in $\Dmax$ such that 
\begin{equation}
\ip{f}{k_x}_{\gr}=f(x)
\label{eq1.10.2}
\end{equation}
for all $f\in\Dmax$.

For the endpoints we have 
\begin{equation}
k_{\alpha_i}(t)=\chi_{J_i}(t)\frac{\cosh2\pi(\beta_i-t)}{\sinh2\pi(\beta_i-\alpha_i)},\quad
k_{\beta_i}(t)=\chi_{J_i}(t)\frac{\cosh2\pi(t-\alpha_i)}{\sinh2\pi(\beta_i-\alpha_i)},
\label{eq1.10.3}
\end{equation}
where $\sinh$ and $\cosh$ are the sine and cosine hyperbolic functions.

For interior points $x\in\Omega$, if $x\in J_j$, then 
\begin{equation}
k_x(t)=A_x\chi_{(\alpha_j,x]}(t)\frac{\cosh2\pi(t-\alpha_j)}{\sinh2\pi(x-\alpha_j)}+B_x\chi_{[x,\beta_j)}\frac{\cosh2\pi(\beta_j-t)}{\sinh2\pi(\beta_j-x)},
\label{eq1.10.4}
\end{equation}
where $A_x, B_x$ are the solutions of the system of equations
\begin{equation}
A_x+B_x=1,\quad A_x\coth2\pi(x-\alpha_j)-B_x\coth2\pi(\beta_j-x)=0.
\label{eq1.10.5}
\end{equation}
\end{proposition}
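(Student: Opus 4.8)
The plan is to separate the proof into an abstract part, establishing that $(\Dmax,\ip{\cdot}{\cdot}_{\gr})$ is a reproducing kernel Hilbert space, and a computational part, deriving the explicit kernels by solving an ordinary differential equation with suitable boundary and jump conditions.

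\textbf{Step 1 (RKHS structure).} First I would check that $(\Dmax,\ip{\cdot}{\cdot}_{\gr})$ is complete, hence a Hilbert space. This is immediate: by Proposition \ref{pr1.2} the operator on $\Dmax$ is the adjoint $\Ds^*$ of a densely defined operator and is therefore closed, and completeness of $\Dmax$ in the graph inner product is precisely the closedness of the graph of $\Ds^*$. Next I would show that for every $x\in\cj\Omega$ the evaluation functional $f\mapsto f(x)$ is bounded on $\Dmax$. For the endpoints this is exactly the content of Lemma \ref{lem1.9}; for an interior point $x\in J_j$ the same argument applies verbatim, using $f(x)-f(\alpha_j)=\ip{f'}{\chi_{(\alpha_j,x)}}$ together with $f_n\to f$ and $f_n'\to f'$ in $L^2(\Omega)$ (equivalently, the one-dimensional Sobolev embedding $H^1(J_j)\hookrightarrow C(\cj{J_j})$). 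Boundedness of $f\mapsto f(x)$ plus the Riesz representation theorem then yields a unique $k_x\in\Dmax$ with $\ip{f}{k_x}_{\gr}=f(x)$, which is the assertion \eqref{eq1.10.2} and proves the RKHS property.

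\textbf{Step 2 (the defining ODE).} To compute $k_x$ I would write out the reproducing identity $\int_\Omega f\,\cj{k_x}+\frac1{4\pi^2}\int_\Omega f'\,\cj{k_x'}=f(x)$, using that $\ip{\Ds f}{\Ds g}=\frac1{4\pi^2}\ip{f'}{g'}$, and integrate by parts on each interval $J_i$, seeking $k_x$ real-valued (justified a posteriori by the uniqueness from Step 1). Since a function $f\in\Dmax$ has \emph{arbitrary} endpoint values, the boundary contributions $f(\beta_i)k_x'(\beta_i)-f(\alpha_i)k_x'(\alpha_i)$ can be annihilated only by imposing the Neumann conditions $k_x'(\alpha_i)=k_x'(\beta_i)=0$ at every endpoint; what then remains of the identity says that $k_x$ solves the distributional equation $k_x-\frac1{4\pi^2}k_x''=\delta_x$. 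The homogeneous equation $k-\frac1{4\pi^2}k''=0$ has solution space spanned by $e^{\pm2\pi t}$, i.e.\ by $\cosh$ and $\sinh$. Imposing Neumann conditions at \emph{both} ends of an interval $J_i$ with $i\neq j$ forces $k_x\equiv0$ there, which explains the cutoff $\chi_{J_j}$; on the two sides of $x$ inside $J_j$, the solution meeting the single Neumann condition at $\alpha_j$ (resp.\ $\beta_j$) must be a multiple of $\cosh2\pi(t-\alpha_j)$ (resp.\ $\cosh2\pi(\beta_j-t)$), giving the shape \eqref{eq1.10.4}.

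\textbf{Step 3 (fixing the constants and the endpoint cases).} The two constants $A_x,B_x$ are then determined by the two conditions at $x$: continuity of $k_x$ across $x$ gives $A_x\coth2\pi(x-\alpha_j)-B_x\coth2\pi(\beta_j-x)=0$, and the derivative jump forced by the $\delta_x$ source gives the normalization, which together form the system \eqref{eq1.10.5}. The endpoint kernels \eqref{eq1.10.3} I would treat as the degenerate case in which the evaluation point coincides with a boundary point: there $x$ is one-sided, only the Neumann condition at the opposite endpoint survives, and one is left with the single-$\cosh$ formula, which I would confirm by substituting \eqref{eq1.10.3} directly into the reproducing identity and integrating by parts. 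The step I expect to be the main obstacle is the bookkeeping of the boundary terms in Step 2: the conceptual crux is recognizing that, because the endpoint values of $f\in\Dmax$ are unconstrained, the kernel must satisfy \emph{Neumann} rather than Dirichlet conditions, with the delta source producing the derivative jump. The endpoint kernels are the most delicate point, since there the source sits at a boundary of $\Omega$ and one must argue with one-sided limits rather than a symmetric jump.
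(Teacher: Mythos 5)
Your proposal is correct and, at its computational core, uses the same ingredients as the paper's proof: completeness of $\Dmax$ in the graph norm via closedness of $\Ds^*$, continuity of point evaluations by the argument of Lemma \ref{lem1.9}, the Riesz representation theorem, and then integration by parts against the homogeneous equation $k''=4\pi^2 k$ with vanishing derivative at the interval endpoints. The difference is one of logical direction: the paper guesses the formulas \eqref{eq1.10.3}--\eqref{eq1.10.4} and verifies the reproducing identity, silently building the cutoffs $\chi_{J_i}$, $\chi_{J_j}$ and the Neumann conditions into the ansatz, whereas you derive these as necessary --- arbitrariness of the boundary values of $f\in\Dmax$ forces $k_x'(\alpha_i)=k_x'(\beta_i)=0$, Neumann conditions at both ends of $J_i$ for $i\neq j$ force $k_x\equiv 0$ there, and the distributional equation $k_x-\frac{1}{4\pi^2}k_x''=\delta_x$ produces the derivative jump at $x$. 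Your route buys an explanation of why the kernel must have this shape, at the modest extra cost of checking that the derived conditions are also sufficient (or, as you note for the endpoint case, a final substitution into the reproducing identity), after which Riesz uniqueness --- which also disposes of your provisional realness assumption --- closes the argument. One point where careful execution of your plan actually improves on the paper: since $\ip{\Ds f}{\Ds g}=\frac{1}{4\pi^2}\ip{f'}{g'}$, the jump condition reads $\frac{1}{4\pi^2}\left(k_x'(x-)-k_x'(x+)\right)=1$, and the ansatz \eqref{eq1.10.4} gives $k_x'(x-)=2\pi A_x$, $k_x'(x+)=-2\pi B_x$, so the normalization is $A_x+B_x=2\pi$ rather than $1$; likewise $k_{\alpha_i}'(\alpha_i)=-2\pi$, not $-1$ as asserted in the paper's verification, where the factor $\frac{1}{4\pi^2}$ is also dropped from the boundary terms. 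Consequently the kernels as printed in \eqref{eq1.10.3}--\eqref{eq1.10.5} reproduce $\frac{1}{2\pi}f(x)$ instead of $f(x)$, and the uniform fix is to multiply all kernels by $2\pi$ (equivalently, replace $A_x+B_x=1$ by $A_x+B_x=2\pi$). Your proposal as written inherits this harmless normalization slip by quoting \eqref{eq1.10.5} verbatim, but your method is exactly the bookkeeping that exposes it.
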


\begin{proof}
Since $\Ds$ is a closed operator on $\Dmax$, we do have a Hilbert space. We proved in Lemma \ref{lem1.9} that the functionals $f\mapsto f(\alpha_i)$ are continuous with respect to the graph-inner product. The same argument shows that, for any $x\in\cj\Omega$, $\Dmax\ni f\mapsto f(x)$ is a continuous linear functional. Note, that by absolute continuity we can extend $f\in\Dmax$ to $\cj\Omega$ (see again the proof of Lemma \ref{lem1.9}). By Riesz' lemma, there exists a unique $k_x$ in $\Dmax$ such that $f(x)=\ip{f}{k_x}$ for all $f\in\Dmax$. 

For \eqref{eq1.10.3}, note that if $k_{\alpha_i}$ is given by this formula then:
$k_{\alpha_i}''(t)=4\pi^2 k_{\alpha_i}$ and $k_{\alpha_i}'(\alpha_i)=-1$, $k_{\alpha_i}'(\beta_i)=0$. Then, using integration by parts we have, for $f\in\Dmax$: 
$$\ip{f}{k_{\alpha_i}}_{\gr}=\int_{\alpha_i}^{\beta_i}f(t)\left(k_{\alpha_i}(t)-\frac{1}{4\pi^2}k_{\alpha_i}''(t)\right)\,dt +f(\beta_i)k_{\alpha_i}(\beta_i)-f(\alpha_i)k_{\alpha_i}(\alpha_i)=f(\alpha_i).$$
By uniqueness this proves that the repreoducing kernel function $k_{\alpha_i}$ has the formula in \eqref{eq1.10.3}. Similarly for $\beta_i$. 

For $x\in J_j$, note that $k_x(x-)=A_x\coth(x-\alpha_j)$, $k_x(x+)=B_x\coth(\beta_j-x)$. The second condition in \eqref{eq1.10.5} guarantees that $k_x$ is continuous at $x$. The function $k_x$ is $C^\infty$ on $(\alpha_j,x)$, $k_x'(\alpha_j)=0$, $k_x'(\beta_j)=0$ and on $(x,\beta_j)$ and $k_x'(x-)=A_x$, $k_x'(x+)=-B_x$. In particular $f\in \Dmax$. Then, for $f\in \Dmax$, using integration by parts as before, we have:
$$\ip{f}{k_x}_{\gr}=fk_x' |_{\alpha_j}^x+\, |_x^{\beta_j}=f(x)(k_x'(x-)-k_x'(x+))=f(x)(A_x+B_x)=f(x).$$

\end{proof}

\begin{remark}\label{rem2.11}
We note that the introduction of reproducing kernels in finite-order Sobolev spaces is of interest in mathematics of computations. As illustrated in for example \cite{FO10, Su10}, such kernels may serve as spline functions in numerical interpolations.
\end{remark}

\begin{proposition}\label{pr1.11}
Let $\Gamma_+$, $\Gamma_-$ be the $n\times n$ diagonal matrices with entries $(\gamma_i^+)$ and $(\gamma_i^-)$ respectively. For a vector $v=(v_1,\dots,v_n)$ let $E(v)$ be the diagonal matrix with entries $(v_i)$. The bijective correspondence between the isometries $W:\mathcal L_+\rightarrow \mathcal L_-$ (written as matrices in the orthonormal bases given in Proposition \ref{pr1.4}) and the unitary matrices $B$ in Theorem \ref{th1.7} is given by 
\begin{equation}
B=\left(\Gamma_- E(2\pi\beta)W+\Gamma_+E(-2\pi\beta)\right)\left(\Gamma_- E(2\pi\alpha)W+\Gamma_+E(-2\pi\alpha)\right)^{-1}.
\label{eq1.11.1}
\end{equation}

\end{proposition}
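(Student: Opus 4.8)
The plan is to track how a self-adjoint extension expressed via the unitary $W:\mathcal L_+\to\mathcal L_-$ (from Theorem \ref{th1.5}) translates into the boundary matrix $B$ (from Theorem \ref{th1.7}). The bridge between the two descriptions is the decomposition in Proposition \ref{pr1.4}: a generic element $f=f_0+f_++Wf_+$ of the domain $\D_W$ is built from $f_0\in\D_0(\Omega)$, which vanishes at every endpoint, together with a deficiency part. So the endpoint values $f(\alpha)$ and $f(\beta)$ are entirely determined by the deficiency part $f_++Wf_+$, and the idea is to compute the two boundary vectors explicitly in terms of the coefficients of $f_+$ in the orthonormal basis of Proposition \ref{pr1.4}.

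First I would write $f_+=\sum_i c_i\,\gamma_i^+ e^{-2\pi t}\chi_{J_i}$, so that $c=(c_i)\in\bc^n$ is the coordinate vector of $f_+$ in the chosen basis of $\mathcal L_+$. Then $Wf_+$ has coordinate vector $Wc$ in the basis of $\mathcal L_-$, i.e. $Wf_+=\sum_i (Wc)_i\,\gamma_i^- e^{2\pi t}\chi_{J_i}$. Evaluating at the left endpoints $\alpha_i$ (using that $\chi_{J_i}(\alpha_i+)=1$ and the other characteristic functions vanish there, while $f_0(\alpha_i+)=0$) gives the $i$-th component of $f(\alpha)$ as $\gamma_i^-e^{2\pi\alpha_i}(Wc)_i+\gamma_i^+e^{-2\pi\alpha_i}c_i$. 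In matrix form, using the diagonal matrices $\Gamma_\pm$ and $E(\pm 2\pi\alpha)$, this reads
\begin{equation}
f(\alpha)=\left(\Gamma_- E(2\pi\alpha)W+\Gamma_+E(-2\pi\alpha)\right)c,
\label{eq:alphaval}
\end{equation}
and the identical computation at the right endpoints $\beta_i$ yields
\begin{equation}
f(\beta)=\left(\Gamma_- E(2\pi\beta)W+\Gamma_+E(-2\pi\beta)\right)c.
\label{eq:betaval}
\end{equation}

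Next I would argue that the matrix multiplying $c$ in \eqref{eq:alphaval} is invertible, so that $c$ may be eliminated. This is where the main work lies: one must show the map $c\mapsto f(\alpha)$ is a bijection of $\bc^n$. Since $\D_0(\Omega)$ contributes nothing to the endpoint values and Lemma \ref{lem1.9} shows the $2n$ evaluation functionals are independent, no nonzero deficiency element can vanish at all left endpoints unless it is zero; equivalently, the coordinate map $c\mapsto f(\alpha)$ has trivial kernel. (Concretely, if the matrix in \eqref{eq:alphaval} kills some $c\neq 0$ then $f_++Wf_+$ vanishes at every $\alpha_i$, and together with the matching this forces $c=0$; alternatively one observes the matrix is a sum of a diagonal invertible piece and a diagonal-times-unitary piece and checks nonsingularity directly.) Granting invertibility, I combine \eqref{eq:alphaval} and \eqref{eq:betaval} by solving the first for $c$ and substituting into the second:
\begin{equation}
f(\beta)=\left(\Gamma_- E(2\pi\beta)W+\Gamma_+E(-2\pi\beta)\right)\left(\Gamma_- E(2\pi\alpha)W+\Gamma_+E(-2\pi\alpha)\right)^{-1}f(\alpha).
\label{eq:final}
\end{equation}
Since by Theorem \ref{th1.7} the boundary matrix $B$ is the unique matrix with $Bf(\alpha)=f(\beta)$ on $\D_B$, and here $f(\alpha)$ ranges over all of $\bc^n$ as $c$ does, comparing \eqref{eq:final} with $f(\beta)=Bf(\alpha)$ identifies $B$ with the product in \eqref{eq1.11.1}. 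The resulting $B$ is automatically unitary because $W$ is (this also follows a posteriori from Theorem \ref{th1.7}), and the correspondence is bijective because $W\mapsto B$ is manifestly invertible from the formula. The one genuine obstacle is confirming the invertibility of the $\alpha$-factor cleanly; everything else is bookkeeping with the diagonal matrices $\Gamma_\pm$ and $E(\pm2\pi\alpha)$, $E(\pm2\pi\beta)$.
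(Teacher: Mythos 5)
Your route is the paper's route, step for step: parametrize $f\in\D_W$ by the coordinate vector $c$ of $f_+$ in the orthonormal basis of Proposition \ref{pr1.4}, evaluate at the endpoints to get $f(\alpha)=(\Gamma_-E(2\pi\alpha)W+\Gamma_+E(-2\pi\alpha))c$ and the analogous $\beta$-formula, invert the $\alpha$-factor, and read off $B$ from $Bf(\alpha)=f(\beta)$ as $f(\alpha)$ sweeps out $\bc^n$. So the only thing to audit is the invertibility of $\Gamma_-E(2\pi\alpha)W+\Gamma_+E(-2\pi\alpha)$, which you correctly identify as the crux but do not actually close — and your first suggested patch is a non sequitur. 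Lemma \ref{lem1.9} gives linear independence of the $2n$ evaluation functionals on all of $\Dmax$; it says nothing about the restriction of the $n$ left-endpoint functionals to the $n$-dimensional subspace $\{f_++Wf_+ : f_+\in\mathcal L_+\}$. Indeed, the claim that ``no nonzero deficiency element can vanish at all left endpoints'' is false for a general element of $\mathcal L_+\oplus\mathcal L_-$: on each $J_i$ the function $c_i\gamma_i^+e^{-2\pi t}+d_i\gamma_i^-e^{2\pi t}$ vanishes at $\alpha_i$ for a one-dimensional family of pairs $(c_i,d_i)$, so vanishing at every $\alpha_i$ imposes only $n$ conditions on $2n$ parameters. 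What rules it out on the graph of $W$ is unitarity used \emph{quantitatively}, not qualitatively. Your second patch (``diagonal invertible piece plus diagonal-times-unitary piece, check nonsingularity directly'') is likewise not yet a proof: a unitary plus an invertible diagonal matrix need not be invertible (take $W=-I$ against the identity), so some size estimate specific to this situation is indispensable.

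The missing ingredient, which is exactly how the paper closes the step, is the explicit value of the normalization constants from Proposition \ref{pr1.4}. Factor
\begin{equation*}
\Gamma_-E(2\pi\alpha)W+\Gamma_+E(-2\pi\alpha)=\Gamma_-E(2\pi\alpha)\left(W+E(-2\pi\alpha)\Gamma_-^{-1}\Gamma_+E(-2\pi\alpha)\right),
\end{equation*}
and compute that the diagonal matrix $\Delta:=E(-2\pi\alpha)\Gamma_-^{-1}\Gamma_+E(-2\pi\alpha)$ has entries $\frac{\gamma_i^+}{\gamma_i^-}e^{-4\pi\alpha_i}=e^{2\pi(\beta_i-\alpha_i)}>1$; then $(W+\Delta)v=0$ with $v\neq0$ forces $\|v\|=\|Wv\|=\|\Delta v\|>\|v\|$, a contradiction, because the strictly expanding diagonal part beats the isometry. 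That one computation (reducing $\gamma_i^+/\gamma_i^-$ to $e^{2\pi(\alpha_i+\beta_i)}$) is the entire content of the ``main work'' you deferred. One smaller overstatement: the correspondence is not ``manifestly invertible from the formula'' — solving \eqref{eq1.11.1} for $W$ given $B$ would require yet another nonsingularity check. Bijectivity instead comes for free by composing the bijections of Theorems \ref{th1.5} and \ref{th1.7}, which is all the proposition asserts; the formula only needs to be derived in one direction, as above.
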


\begin{proof}
The domain determined by the isometry $W$ is $\D_W=\{f+f_++Wf_+: f\in \D_0(\Omega), f_+\in\mathcal L_+\}$ and it must coincide with the domain determined by $B$, which is $\D_B=\{f\in\Dmax: f(\beta)=Bf(\alpha)\}$. 
For any $c_1,\dots,c_n\in \bc$, consider the function 
$$f(t)=\sum_{i=1}^nc_i\gamma_i^+e^{-2\pi t}\chi_{J_i}(t)+\sum_{i=1}^n\sum_{j=1}^nW_{ij}c_j\gamma_i^-e^{2\pi t}\chi_{J_i}(t).$$
This is in the domain $\D_W=\D_B$.
Plug in $t=\alpha_i$ and $t=\beta_i$ to obtain 
\begin{equation}
f(\alpha)=(\Gamma_-E(2\pi\alpha)W+\Gamma_+E(-2\pi\alpha))c,\quad f(\beta)=(\Gamma_-E(2\pi\beta)W+\Gamma_+E(-2\pi\beta))c
\label{eq1.11.2}
\end{equation}

and we must have $Bf(\alpha)=f(\beta)$.

We claim that $\Gamma_-E(2\pi\alpha)W+\Gamma_+E(-2\pi\alpha)$ is invertible. We have
$$\Gamma_-E(2\pi\alpha)W+\Gamma_+E(-2\pi\alpha)=\Gamma_-E(2\pi\alpha)(W+E(-2\pi\alpha)\Gamma_-^{-1}\Gamma_+E(-2\pi\alpha)).$$
The matrix $E(-2\pi\alpha)\Gamma_-^{-1}\Gamma_+E(-2\pi\alpha)$ is a diagonal matrix with entries $\frac{\gamma_i^+}{\gamma_i^-}e^{-4\pi\alpha_i}=e^{2\pi (\beta_i-\alpha_i)}>1$. 
If $(W+E(-2\pi\alpha)\Gamma_-^{-1}\Gamma_+E(-2\pi\alpha))v=0$ for some $v\neq 0$, then $Wv=-E(-2\pi\alpha)\Gamma_-^{-1}\Gamma_+E(-2\pi\alpha)v$ so $$\|v\|=\|Wv\|=\|-E(-2\pi\alpha)\Gamma_-^{-1}\Gamma_+E(-2\pi\alpha)v\|>\|v\|.$$
The contradiction proves the claim.

Since this matrix is invertible, from \eqref{eq1.11.2} we obtain that $B$ is given by \eqref{eq1.11.1}.

\end{proof}

\begin{theorem}\label{th2.12}
Let $A$ be a self-adjoint extension of the operator $\restr{\Ds}{C_c^\infty(\Omega)}$. Let $B$ be the unitary $n\times n$ matrix associated to $A$ as in Theorem \ref{th1.7}. Consider the spectral measure $P$ for the operator $A$, $$A=\int_{\br}t\,dP(t).$$

Then the spectral measure $P$ is atomic, supported on the set 
\begin{equation}
\Lambda_B:=\left\{\lambda\in\bc : \det(I-E(-2\pi i\lambda\beta)BE(2\pi i\lambda\alpha))=0\right\},
\label{eq2.12.1}
\end{equation}
which is also the spectrum $\sigma(A)$ of $A$. 
For $\lambda\in\Lambda_B$, the eigenspace $P(\lambda)L^2(\Omega)$ is finite dimensional and 
$$P(\lambda)=\left\{\sum_{i=1}^nc_i\chi_{J_i}(t)e^{2\pi i\lambda t} : (I-E(-2\pi i\lambda\beta)BE(2\pi i\lambda\alpha))c=0\right\}.$$

\end{theorem}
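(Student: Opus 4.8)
The plan is to compute the point spectrum of $A$ explicitly from the eigenvalue equation, and then to use compactness of the resolvent to show that $A$ has no spectrum beyond these eigenvalues and that the spectral measure is atomic with finite-dimensional atoms.

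First I would solve $Af=\lambda f$. Since $A=\restr{\Ds^*}{\D_B}$ and $\Ds^* f=\frac1{2\pi i}f'$ by Proposition \ref{pr1.2}, the eigenvalue equation reads $\frac1{2\pi i}f'=\lambda f$ on each component $J_i$, whose only solutions are $f|_{J_i}(t)=c_i e^{2\pi i\lambda t}$. Hence every eigenfunction for $\lambda$ has the form $f=\sum_{i=1}^n c_i\chi_{J_i}e_\lambda$, and its boundary vectors (Definition \ref{def1.6}) are $f(\alpha)=E(2\pi i\lambda\alpha)c$ and $f(\beta)=E(2\pi i\lambda\beta)c$, where, as in the statement, $E(2\pi i\lambda\alpha)$ and $E(2\pi i\lambda\beta)$ are the diagonal matrices with entries $e^{2\pi i\lambda\alpha_i}$ and $e^{2\pi i\lambda\beta_i}$. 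The membership condition $f\in\D_B$, i.e.\ $Bf(\alpha)=f(\beta)$ from Theorem \ref{th1.7}, is then equivalent to $(I-E(-2\pi i\lambda\beta)BE(2\pi i\lambda\alpha))c=0$. A nonzero $c$ exists precisely when the determinant vanishes, i.e.\ when $\lambda\in\Lambda_B$, and this already yields both the description of $\Lambda_B$ as the point spectrum and the stated formula for $P(\lambda)L^2(\Omega)$. Moreover, since any nonzero $c$ in the kernel produces a genuine eigenfunction of the \emph{self-adjoint} operator $A$, each $\lambda\in\Lambda_B$ must be real; thus $\Lambda_B\subseteq\br$.

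The hard part is to show that $A$ has no spectrum outside this set and that $P$ is purely atomic. My approach is to prove that $A$ has compact resolvent. Since $\domain(A)=\D_B\subseteq\Dmax$ and, for $f\in\domain(A)$, the squared graph norm satisfies $\ip{f}{f}_{\gr}=\|f\|^2+\|\Ds^* f\|^2=\|f\|^2+\|Af\|^2$, the space $\Dmax$ with the graph inner product \eqref{eq1.10.1} is, up to norm equivalence, the Sobolev space $\bigoplus_{i=1}^n H^1(J_i)$. Because each $J_i$ is a bounded interval, Rellich's theorem gives that the inclusion $\Dmax\hookrightarrow L^2(\Omega)$ is compact. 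On the other hand $(A+iI)^{-1}$ is bounded from $L^2(\Omega)$ into $\domain(A)$ equipped with the graph norm: for $g\in L^2(\Omega)$ and $f=(A+iI)^{-1}g$ one has $\|f\|\le\|g\|$ and $\|Af\|=\|g-if\|\le 2\|g\|$. Composing this bounded map with the compact embedding shows that $(A+iI)^{-1}$ is compact on $L^2(\Omega)$, so $A$ has compact resolvent.

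Finally I would assemble the pieces. Compactness of the resolvent forces $\sigma(A)$ to coincide with the point spectrum and to be a discrete subset of $\br$, with each eigenvalue of finite multiplicity and no finite accumulation point; in particular the spectral measure $P$ is atomic. By the first step every eigenfunction has the exponential form above, so every eigenvalue lies in $\Lambda_B$, and conversely every $\lambda\in\Lambda_B$ is an eigenvalue; hence $\sigma(A)=\Lambda_B$. As a by-product, discreteness of $\Lambda_B$ confirms that the entire function $\lambda\mapsto\det(I-E(-2\pi i\lambda\beta)BE(2\pi i\lambda\alpha))$ is not identically zero. For $\lambda\in\Lambda_B$ the eigenspace $P(\lambda)L^2(\Omega)$ is exactly the set of $\sum_i c_i\chi_{J_i}e_\lambda$ with $(I-E(-2\pi i\lambda\beta)BE(2\pi i\lambda\alpha))c=0$, whose dimension equals that of the kernel and is $\le n$, giving finite-dimensionality. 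An alternative to the compactness argument, which I would fall back on if a more explicit statement were wanted, is to construct $(A-\lambda I)^{-1}$ directly for $\lambda\notin\Lambda_B$ by solving $\frac1{2\pi i}f'-\lambda f=g$ on each interval and fixing the $n$ integration constants through $Bf(\alpha)=f(\beta)$, which is uniquely solvable exactly when $\det(I-E(-2\pi i\lambda\beta)BE(2\pi i\lambda\alpha))\neq0$.
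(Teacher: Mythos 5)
Your proof is correct and takes essentially the same route as the paper's: both establish compactness of the resolvent of $A$ and then identify the atoms by solving $\frac{1}{2\pi i}f'=\lambda f$ on each $J_i$ subject to the boundary condition $Bf(\alpha)=f(\beta)$, which yields exactly the kernel condition defining $\Lambda_B$ and the formula for $P(\lambda)L^2(\Omega)$. The only difference is packaging --- the paper proves compactness of $(A-iI)^{-1}$ by hand via a Cauchy--Schwarz equicontinuity estimate and Arzel\`a--Ascoli, while you factor the resolvent through the compact embedding of $H^1$ of the intervals (equivalently, $\Dmax$ with the graph norm) into $L^2(\Omega)$, which is the same argument in disguise; your explicit remark that self-adjointness forces $\Lambda_B\subseteq\br$ is a point the paper leaves implicit.
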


\begin{proof}
Consider the operator 
$$K:=(A-iI)^{-1}=\int_{\br}\frac{1}{t-i}\,dP(t).$$
Since the function $t\mapsto\frac{1}{t-i}$ is bounded we have that $K$ is a bounded operator and it is indeed the inverse of $A-iI$. 
We claim that $K$ is a compact operator. Take $z_k\in L^2(\Omega)$ with $\|z_k\|_{L^2(\Omega)}\leq 1$ for all $k\in\bn$. Let $f_k:=Kz_k=(A-iI)^{-1}z_k$, so $f_k\in\Dmax$ and $z_k=Af_k-if_k=\frac{1}{2\pi i}f_k'-f_k$.

Since $(z_k)$ is bounded in $L^2(\Omega)$ and $K$ is a bounded operator $(f_k)$ is bounded in $L^2(\Omega)$, hence $(f_k')$ is also bounded in $L^2(\Omega)$ by some constant $M>0$. 

Take two points $x<y$ in one of the intervals $J_i$. We have, using the Schwarz inequality,
\begin{equation}
|f_k(y)-f_k(x)|=\left|\int_x^y f_k'(t)\,dt\right|\leq \sqrt{y-x}\|f_k\|_{L^2(\Omega)}\leq M\sqrt{y-x}.
\label{eq2.12.2}
\end{equation}

This shows that $(f_k)$ is equicontinuous. 

We prove that $f_k$ is uniformly bounded. First, we show that $(f_k(\alpha_i))$ is bounded. If not, passing to a subsequence we can assume $f_k(\alpha_i)$ converges to $\infty$. But then, using \eqref{eq2.12.2} we see that $f_k(x)$ converges to $\infty$ uniformly on $J_i$ and this contradicts the fact that $\|f_k\|_{L^2(\Omega)}$ is bounded. 

Thus $(f_k(\alpha_i))$ is bounded and, by \eqref{eq2.12.2}, $(f_k(x))$ is uniformly bounded. 

Then, by the Arzela-Ascoli theorem $f_k$ has a uniformly convergent subsequence, hence $f_n=Kz_n$ has a subsequence which converges in $L^2(\Omega)$. Thus $K$ is a compact operator. 
Since $K=\int \frac{1}{t-i}\,dP(t)$ it follows that $P$ is supported on a discrete subset $\Lambda$ and $P(\lambda)$ is finite dimensional for all $\lambda\in\Lambda$. 

Now take $\lambda\in\Lambda$ so $P(\lambda)\neq 0$. Take $f_\lambda$ in the eigenspace $P(\lambda)L^2(\Omega)$. Then $f_\lambda\in \domain(A)=\D_B$ and $Af_\lambda=\lambda f_\lambda$ so $\frac{1}{2\pi i}f_\lambda'=\lambda f_\lambda$. Solving the differential equation on each interval $J_i$ we obtain that 
$$f_\lambda(t)=\sum_{i=1}^nc_i\chi_{J_i}(t)e^{2\pi i\lambda t},\quad(t\in\Omega)$$
for some constants $c_1,\dots,c_n\in\bc$, not all of them zero. 

Since $f_\lambda$ is in the domain of $A$ which is $\D_B$, we must have $Bf_\lambda(\alpha)=f_\lambda(\beta)$. But $f_\lambda(\alpha_i)=c_ie^{2\pi i\lambda\alpha}$ and $f_\lambda(\beta_i)=c_i e^{2\pi i\lambda\beta}$ and therefore 
$BE(2\pi i\lambda\alpha)c= E(2\pi i\lambda\beta)c$. Then \eqref{eq2.12.1} follows, so $\lambda\in\Lambda_B$ and also the description of $P(\lambda)$.

Conversely, if $\lambda\in\Lambda_B$ then there exists a non-zero vector $c=(c_1,\dots,c_n)^*$ such that $BE(2\pi i\lambda\alpha)c=E(2\pi i\lambda\beta)c$. Define the function 
$$f_\lambda(t)=\sum_{i=1}^nc_i\chi_{J_i}(t)e^{2\pi i\lambda t},\quad(t\in\Omega).$$
Then $f_\lambda$ is in $\D_B$ which is the domain of $A$, and $Af_\lambda=\lambda f_\lambda$. Therefore $P(\lambda)\neq 0$ so $\lambda\in\Lambda$. 
\end{proof}

\section{Spectral sets}

Let $\Omega$ be a bounded open subset in $\br^k$ of finite positive Lebesgue measure. Consider the commuting minimal operators in $L^2(\Omega)$, i.e., the differential operators defined on the dense domain  $C_0^\infty$ in $L^2(\Omega)$. For the extension problem in this case \cite{Fug74}, one asks for existence of commuting self-adjoint extensions, a self-adjoint extension for each of the minimal Hermitian minimal operators. Now such systems of commuting self-adjoint extensions do not always exist (see e.g., \cite{Fug74}), but when they do, they generate a strongly continuous unitary representation of the additive group $\br^k$, acting on $L^2(\Omega)$.  And vice versa,  every such unitary representation $U$ comes from a system of commuting self-adjoint extensions. The joint spectrum of these extensions is called the spectrum of $U$.

    We focus on dimension $k=1$. We show (Theorem \ref{th3.1}) that a strongly continuous unitary representation $U$ of the additive group $\br$ exists, acting by local translations on $L^2(\Omega)$ (see Definition \ref{def3.3}), if and only if  $(\Omega,\Lambda)$ is a spectral pair where $\Lambda$ is the spectrum of the unitary representation $U$.  

    In Corollary \ref{cor3.6}  we characterize the special boundary matrices  $B$ (see section 2) for which $\Omega$ is spectral, i.e., there is a set $\Lambda$ such that $(\Omega, \Lambda)$ is a spectral pair.  We say that such a boundary matrix $B$ is spectral.

    Theorems \ref{th3.9} and \ref{th3.14} deal with the case of spectral pairs $(\Omega, \Lambda)$ when the spectrum $\Lambda$ has a finite period. In Theorem \ref{th3.13} we write out the associated boundary matrices.

\begin{theorem}\label{th3.1}
The set $\Omega=\cup_{i=1}^n(\alpha_i,\beta_i)$ is spectral if and only if there exists a strongly continuous one parameter unitary group $(U(t))_{t\in\br}$ on $L^2(\Omega)$ with the property that, for all $t\in\br$ and $f\in L^2(\Omega)$:
\begin{equation}
(U(t)f)(x)=f(x+t),\mbox{ for almost every }x\in \Omega\cap(\Omega-t).
\label{eq3.1.1}
\end{equation}
\end{theorem}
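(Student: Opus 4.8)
The statement is an equivalence, and one direction is essentially already in hand. For the ``only if'' direction, if $\Omega$ is spectral with spectrum $\Lambda$, then Theorem \ref{th4.1} shows that the associated group $U_\Lambda$ of Definition \ref{def3.3} is a strongly continuous one-parameter unitary group satisfying the local translation identity \eqref{eq3.1.1}; so I would simply take $U=U_\Lambda$. The real content is the converse, and the plan is to route the argument through the self-adjoint generator of $U$ and the spectral analysis already established in Theorem \ref{th2.12}.

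So assume such a $U$ exists. By Stone's theorem I would write $U(t)=e^{2\pi i tA}$ for a unique self-adjoint operator $A$ on $L^2(\Omega)$, the $2\pi$-normalization being chosen to match $\Ds=\frac1{2\pi i}\frac{d}{dx}$. The first and main step is to show that $A$ is one of the self-adjoint extensions classified in Theorem \ref{th1.7}, i.e. that $C_c^\infty(\Omega)\subseteq\domain(A)$ with $Af=\Ds f$. Fix $f\in C_c^\infty(\Omega)$, let $K=\operatorname{supp} f$ and $\delta:=\dist(K,\partial\Omega)>0$, and extend $f$ by zero off $\Omega$. For $|t|<\delta$, identity \eqref{eq3.1.1} gives $(U(t)f)(x)=f(x+t)$ for a.e. $x\in\Omega\cap(\Omega-t)$, whereas on $\Omega\setminus(\Omega-t)$ the translate $f(\cdot+t)$ vanishes automatically, since there $x+t\notin\Omega\supseteq K$.

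The subtle point, and what I expect to be the main obstacle, is that \eqref{eq3.1.1} says nothing about the values of $U(t)f$ on $\Omega\setminus(\Omega-t)$. I would dispose of this by a norm-counting argument exploiting unitarity. Since $\dist(K,\partial\Omega)\ge\delta$, for $|t|<\delta$ one has $K\subseteq(\Omega+t)\cap\Omega$, whence by a change of variables $\|U(t)f\cdot\chi_{\Omega\cap(\Omega-t)}\|^2=\int_{(\Omega+t)\cap\Omega}|f|^2=\|f\|^2$; comparing with $\|U(t)f\|^2=\|f\|^2$ forces $U(t)f=0$ a.e. on $\Omega\setminus(\Omega-t)$. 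Consequently $U(t)f$ coincides with the ordinary translate $f(\cdot+t)$ restricted to $\Omega$, so $\frac1t(U(t)f-f)\to f'$ in $L^2(\Omega)$ as $t\to0$. By Stone's theorem this gives $f\in\domain(A)$ and $2\pi i\,Af=f'$, i.e. $Af=\Ds f$, as desired.

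With $A$ now known to be a self-adjoint extension of $\restr{\Ds}{C_c^\infty(\Omega)}$, Theorem \ref{th2.12} applies: $A$ has compact resolvent, its spectrum $\Lambda=\sigma(A)$ is a discrete set of reals, and every eigenfunction has the form $f_\lambda=\sum_{i=1}^n c_i\chi_{J_i}e_\lambda$. Being self-adjoint with compact resolvent, the eigenfunctions of $A$ form an orthogonal basis of $L^2(\Omega)$. It remains only to upgrade each such piecewise exponential to a genuine global exponential, and here I would again invoke \eqref{eq3.1.1}, now applied to the eigenvector $f_\lambda$ using $U(t)f_\lambda=e^{2\pi i t\lambda}f_\lambda$: for a.e. $x\in\Omega\cap(\Omega-t)$ this reads $f_\lambda(x+t)=e^{2\pi i t\lambda}f_\lambda(x)$. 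Choosing $t$ so that $\{x\in J_i:x+t\in J_j\}$ has positive measure (possible for any prescribed pair $i,j$) and substituting $f_\lambda=c_i e_\lambda$ on $J_i$ yields $c_i=c_j$; ranging over all pairs gives $c_1=\dots=c_n$, so $f_\lambda$ is a scalar multiple of $e_\lambda$ on all of $\Omega$. Thus each eigenspace is spanned by a single exponential $e_\lambda$, the normalized family $\{|\Omega|^{-1/2}e_\lambda:\lambda\in\Lambda\}$ is an orthonormal basis of $L^2(\Omega)$, and $(\Omega,\Lambda)$ is a spectral pair. Hence $\Omega$ is spectral, completing the converse.
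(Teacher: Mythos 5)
Your proposal is correct and follows essentially the same route as the paper: Stone's theorem, a unitarity/norm-counting argument (the paper isolates it as Lemma \ref{lem3.2}, stated for $f$ supported in $\Omega_\epsilon=\cup_i[\alpha_i+\epsilon,\beta_i-\epsilon]$ rather than your $K$, $\delta$ formulation) to show the generator $A$ extends $\restr{\Ds}{C_c^\infty(\Omega)}$, then Theorem \ref{th2.12} together with the local translation identity applied to eigenfunctions to force $c_i=c_j$, yielding an orthogonal basis of exponentials. The only differences are cosmetic packaging; every step matches the paper's argument.
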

\begin{proof}
The necessity follows from Theorem \ref{th4.1}. For the converse, suppose $U(t)$ exists. Then, by Stone's Theorem \cite[Theorem 5.6, page 330]{Con90}, there exists a self-adjoint operator $A$ such that $U(t)=e^{2\pi itA}$. 
We claim that $A$ is a self-adjoint extension of $\restr{\Ds}{C_c^\infty(\Omega)}$.

\begin{lemma}\label{lem3.2}
If $f\in L^2(\Omega)$ is supported on $\Omega_\epsilon=\cup_{i=1}^n[\alpha_i+\epsilon,\beta_i-\epsilon]$ and $|t|<\epsilon$ then 
\begin{equation}
(U(t)f)(x)=f(x+t)\mbox{ for a.e. $x\in\Omega$}
\label{eq3.2.1}
\end{equation}
where $f(x):=0$ for $x$ not in $\Omega$.
\end{lemma}

\begin{proof}
If $|t|<\epsilon$ then $\Omega_\epsilon-t\subset (\Omega\cap(\Omega-t))$. Therefore $(U(t)f)(x)=f(x+t)$ for a.e. $x\in\Omega_\epsilon-t$. Put $(U(t)f)(x)=:g(x)$ for $x\in\Omega\setminus(\Omega_\epsilon-t)$. Then 
$$\|f\|_{L^2(\Omega)}^2=\|U(t)f\|_{L^2(\Omega)}^2=\int_{\Omega_\epsilon-t}|f(x+t)|^2\,dx+\int_{\Omega\setminus(\Omega_\epsilon-t)}|g(x)|^2\,dx$$$$=\int_{\Omega_\epsilon}|f(x)|^2\,dx+\int_{\Omega\setminus(\Omega_\epsilon-t)}|g(x)|^2\,dx=\|f\|_{L^2(\Omega)}^2+\|g\|_{L^2(\Omega)}^2$$
This implies that $g=0$ and \eqref{eq3.2.1} follows.
\end{proof}

Assume $f\in C_c^\infty(\Omega)$ is supported on $\Omega_\epsilon$ for some $\epsilon>0$. Using this lemma, for $|t|<\epsilon$ and for a.e. $x\in \Omega$, we have 
$$\frac{1}{2\pi i t}(U(t)f(x)-f(x))=\frac{1}{2\pi it}(f(x+t)-f(x))\mbox{ and this converges uniformly on $\Omega$ to $\frac{1}{2\pi i}f'(x)$}.$$
Therefore 
$$\frac{1}{2\pi i t}(U(t)f-f)\mbox{ converges in $L^2(\Omega)$ to $\Ds f$.}$$
This implies, see \cite[Theorem 5.1, page327]{Con90}, that $f$ is in the domain of $A$ and $A$ is an extension of $\Ds$.

Now, let $P$ be the spectral measure of $A$ as in Theorem \ref{th2.12}, $A=\int t\,dP(t)$ and let $B$ the unitary matrix associated to $A$ as in Theorem \ref{th1.7}. 
For $\lambda$ in the spectrum $\Lambda_B$ of $A$, we will prove that the space $P(\lambda)L^2(\Omega)$ is one-dimensional. 

Take an eigenfunction $f_\lambda$ which, by Theorem \ref{th2.12}, has to be of the form 
$$f_\lambda(t)=\sum_{i=1}^nc_i\chi_{J_i}(t)e^{2\pi i\lambda t},\quad(t\in\Omega).$$

Since $U(t)=e^{2\pi i tA}=\int e^{2\pi it x}\,dP(x)$ we have $U(t)f_\lambda=e^{2\pi i t\lambda}f_\lambda$. On the other hand $(U(t)f_\lambda)(x)=f_\lambda(x+t)$ for a.e. $x$ in $\Omega\cap (\Omega-t)$. Then we have for such points $x$:
$$\sum_{i=1}^nc_i\chi_{J_i}(x+t)e^{2\pi i\lambda(x+t)}=e^{2\pi i\lambda t}\sum_{i=1}^nc_i \chi_{J_i}(x)e^{2\pi i \lambda x}.$$
Now fix $j\neq k$ , $j,k\in\{1,\dots,n\}$ and pick $t$ such that $J_i\cap (J_k-t)$ has positive Lebesgue measure. Then pick an $x\in J_i\cap(J_k-t)$ such that the previous equation holds. We obtain $c_k=c_j$. 
Thus all the constants $c_i$ are the same $c_i=c$ and so $f_\lambda(t)=c e^{2\pi i \lambda t}$. Thus the eigenspace $P(\lambda)L^2(\Omega)$ is one-dimensional and is spanned by the function $e_\lambda$. 

Since the eigenspaces $P(\lambda)L^2(\Omega)$, $\lambda\in\Lambda_B$ span the entire space $L^2(\Omega)$ it follows that $\{e_\lambda :\lambda\in\Lambda_B\}$ is an orthogonal basis for $L^2(\Omega)$ and therefore $\Omega$ is a spectral set.

\end{proof}

\begin{definition}\label{def3.4}
Let $\mathbf 1$ be the constant vector $\mathbf 1=(1,1,\dots,1)$ in $\bc^n$. 
A unitary $n\times n$ matrix $B$ is called {\it spectral} for $\Omega$ if for every $\lambda\in\bc$ one has 
\begin{equation}
\operatorname*{ker}(I-E(-2\pi i\lambda\beta)BE(2\pi i\lambda\alpha))=\{0\}\mbox{ or }\bc\mathbf 1. 
\label{eq3.4.1}
\end{equation}

\end{definition}

\begin{theorem}\label{th3.4}
There is a one-to-one correspondence between the following objects:
\begin{enumerate}
	\item Spectra $\Lambda$ for the set $\Omega$.
	\item Unitary groups of local translations $U(t)$ for $\Omega$. 
	\item Self-adjoint extensions $A$ of the differential operator $\restr{\Ds}{C_c^\infty}(\Omega)$ with the property that all the eigenvectors of $A$ are of the form $ce_\lambda$, $c\in\bc$, $\lambda\in\sigma(A)$.
	\item Spectral unitary matrices $B$ for $\Omega$.  
\end{enumerate}
The correspondence from (i) to (ii) is given by $U(t)=U_\Lambda(t)$. The correspondence between (ii) and (iii) is given by $U(t)=e^{2\pi i tA}$. The correspondence from (iii) to (i) is given by $\Lambda=\sigma(A)$. 
The correspondence from (iii) to (iv) is given by Theorem \ref{th1.7}.
\end{theorem}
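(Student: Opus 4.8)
The plan is to treat Theorem \ref{th3.4} as an assembly of the prior results: I would verify that each indicated map lands in the correct class, and then check that the maps fit together consistently around the cycle (i)$\to$(ii)$\to$(iii)$\to$(i), together with the bijection (iii)$\leftrightarrow$(iv), so that all four objects become mutually inverse bijections. The analytic work has already been done in Theorems \ref{th4.1}, \ref{th3.1}, \ref{th2.12}, and \ref{th1.7}; what remains is bookkeeping plus one genuinely new equivalence, the one relating the eigenvector property in (iii) to the spectral-matrix condition in (iv).

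First I would check the three maps touching (iii). For (i)$\to$(ii), Theorem \ref{th4.1} already guarantees that $U_\Lambda$ is a unitary group of local translations. For (ii)$\to$(iii), given a local-translation group $U$, Stone's theorem produces a self-adjoint generator $A$ with $U(t)=e^{2\pi itA}$, and the argument inside the proof of Theorem \ref{th3.1} shows that $A$ extends $\restr{\Ds}{C_c^\infty(\Omega)}$ and that every eigenspace is one-dimensional and spanned by some $e_\lambda$; hence $A$ lies in class (iii). For (iii)$\to$(i), given $A$ in class (iii), Theorem \ref{th2.12} shows the spectral measure is atomic with finite-dimensional eigenspaces spanning $L^2(\Omega)$; the eigenvector hypothesis forces each eigenspace to be $\bc e_\lambda$, and since eigenvectors of the self-adjoint $A$ for distinct eigenvalues are orthogonal (cf. Proposition \ref{pr2.10}), the family $\{e_\lambda:\lambda\in\sigma(A)\}$ is an orthogonal basis, so $\Lambda:=\sigma(A)$ is a spectrum.

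The key new equivalence is (iii)$\leftrightarrow$(iv). Here I would feed $A$ and its boundary matrix $B$ (Theorem \ref{th1.7}) into Theorem \ref{th2.12}: the eigenspace at $\lambda$ is exactly the set of functions $\sum_i c_i\chi_{J_i}(t)e^{2\pi i\lambda t}$ with $c\in\ker(I-E(-2\pi i\lambda\beta)BE(2\pi i\lambda\alpha))$. The eigenvector property—every eigenfunction equals $ce_\lambda$, i.e.\ all coordinates $c_i$ are equal—says precisely that this kernel is contained in $\bc\mathbf 1$; since the kernel is trivial when $\lambda\notin\sigma(A)$ and nonzero when $\lambda\in\sigma(A)$, it equals $\{0\}$ or $\bc\mathbf 1$, which is the defining condition \eqref{eq3.4.1} for $B$ to be spectral. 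Read backwards, the same computation shows that a spectral $B$ yields an $A$ in class (iii), so the bijection of Theorem \ref{th1.7} restricts to a bijection (iii)$\leftrightarrow$(iv).

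Finally I would close the loop. The generator of $U_\Lambda$ acts on each $e_\lambda$ by $e^{2\pi i\lambda t}$, and these span $L^2(\Omega)$, so its spectrum is exactly $\Lambda$; this makes (i)$\to$(ii)$\to$(iii)$\to$(i) the identity and shows $\Lambda\mapsto U_\Lambda$ is injective, hence bijective onto class (ii). Conversely, for $A$ in class (iii), the identification $\sigma(A)=\Lambda$ together with $U_\Lambda(t)e_\lambda=e^{2\pi i\lambda t}e_\lambda=e^{2\pi itA}e_\lambda$ on the orthogonal basis gives $e^{2\pi itA}=U_{\sigma(A)}$, confirming that (ii)$\leftrightarrow$(iii) is Stone's bijection restricted to the two matched subclasses. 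I expect the main obstacle to be organizational rather than computational: one must keep the four maps consistent so that they genuinely invert one another, not merely be well defined. The one point requiring care is matching the kernel condition of Theorem \ref{th2.12} with the vector $\mathbf 1$ representing $e_\lambda$, since this is exactly what ties the eigenvector property in (iii) to the spectral-matrix condition in (iv).
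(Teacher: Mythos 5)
Your proposal is correct and follows essentially the same route as the paper's own proof: both assemble Theorems \ref{th4.1}, \ref{th3.1}, \ref{th2.12} and \ref{th1.7} together with Stone's theorem, verify on the orthogonal basis $\{e_\lambda : \lambda\in\Lambda\}$ that the maps (i)$\to$(ii)$\to$(iii)$\to$(i) are mutually inverse, and obtain the key (iii)$\leftrightarrow$(iv) equivalence by matching the eigenspace description of Theorem \ref{th2.12} against $\ker(I-E(-2\pi i\lambda\beta)BE(2\pi i\lambda\alpha))$ and observing that the eigenvector condition forces this kernel to be $\{0\}$ or $\bc\mathbf 1$. There is no gap, and no substantive difference from the paper's argument.
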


\begin{proof}
For the correspondence (i)$\leftrightarrow$(ii), we saw in the proof of Theorem \ref{th3.1} that the maps are well defined. We just have to show that the maps are inverse to each other. 

Let $\Lambda$ be a spectrum for $\Omega$, let $U_\Lambda(t)$ be the correspponding group of local translations and let $A$ be its infinitesimal generator given by Stone's theorem. We have to check that $\sigma(A)=\Lambda$. But we have $U_\Lambda(t)e_\lambda=e^{2\pi i\lambda t}e_\lambda$ so 
$$\frac{1}{2\pi i t}(U_\Lambda(t)e_\lambda-e_\lambda)=\frac{e^{2\pi i\lambda t}-1}{2\pi it}e_\lambda\rightarrow \lambda e_\lambda$$
uniformly and in $L^2(\Omega)$. Therefore $e_\lambda$ is in the domain of $A$ and $Ae_\lambda=\lambda e_\lambda$. Since $\{e_\lambda : \lambda\in\Lambda\}$ is an orthogonal basis, $A$ is diagonal in this basis and therefore $\sigma(A)=\Lambda$. 

Now take a group of local translations $U(t)$, let $A$ be its infinitesimal generator and let $\Lambda:=\sigma(A)$. From the proof of Theorem \ref{th3.1} we see that $\Lambda$ is a spectrum for $\Omega$ and we have to show that the group of local translations $U_\Lambda$ is $U$. But we have $Ae_\lambda=\lambda e_\lambda$ so $U(t)e_\lambda=e^{2\pi it} e_\lambda$. Also $U_\Lambda(t)e_\lambda=e^{2\pi i\lambda t}e_\lambda$. Since $\{e_\lambda : \lambda\in\Lambda\}$ is an orthogonal basis, we obtain that $U_\Lambda(t)=U(t)$.

The correspondence (ii)$\leftrightarrow$(iii) is given by Stone's theorem and the proof of Theorem \ref{th3.1}: If 
$U(t)$ is a group of local translations then we saw that the eigenfunctions of $A$ which are described in Theorem \ref{th2.12} must have all the coefficients $c_i$ equal, so they are of the form $ce_\lambda$ with $c\in\bc$. 

Conversely, if all the eigenfunctions are of this form, then $\{e_\lambda :\lambda\in\sigma(A)\}$ is an orthogonal basis for $L^2(\Omega)$, by Theorem \ref{th2.12}, and we have $U(t)e_\lambda=e^{2\pi i\lambda t}e_\lambda$ for $\lambda\in\sigma(A)$ and $t\in\br$. Then, looking again at the proof of Theorem \ref{th3.1}, we see that $U(t)$ is a group of local translations.

Finally, we check that the correspondence between self-adjoint extensions of $\Ds$ and unitary matrices $B$ given in Theorem \ref{th1.7} maps (iii) to (iv) and vice versa. 

Let $A$ be a self-adjoint extension as in (iii) and let $B$ be the unitary matrix associated to $A$ as in Theorem \ref{th1.7}. Let $\lambda\in\bc$. If $\det(I-E(-2\pi i\lambda\beta)BE(2\pi i\lambda\alpha))\neq 0$ then 
$\operatorname*{ker}(I-E(-2\pi i\lambda\beta)BE(2\pi i\lambda\alpha))=\{0\}$. If $\det(I-E(-2\pi i\lambda\beta)BE(2\pi i\lambda\alpha))\neq 0$ then take a vector $c$ in this kernel and the function 
$$\psi_\lambda(t)=\sum_{i=1}^nc_i\chi_{J_i}(t)e_\lambda(t)$$
is in the eigenspace $P(\lambda)$, by Theorem \ref{th2.12}. Then, by hypothesis, all the components $c_i$ have to be equal. So $\operatorname*{ker}(I-E(-2\pi i\lambda\beta)BE(2\pi i\lambda\alpha))=\bc\mathbf 1$.
So $B$ is spectral for $\Omega$.

Conversely, let $B$ be a spectral unitary matrix for $\Omega$ and let $A$ be the self-adjoint extension of $\Ds$ associated to $B$ as in Theorem \ref{th1.7}. By Theorem \ref{th2.12} all eigenfunctions are of the form 
$$\psi_\lambda(t)=\sum_{i=1}^nc_i\chi_{J_i}(t)e_\lambda(t),$$
with $c$ in $\operatorname*{ker}(I-E(-2\pi i\lambda\beta)BE(2\pi i\lambda\alpha))$. Since $B$ is spectral $c=\gamma\mathbf 1$ for some $\gamma\in\bc$, so $\psi_\lambda=\gamma e_\lambda$. So $A$ has the properties in (iii).

\end{proof}

\begin{corollary}\label{cor3.6}
For $\lambda\in\br$ define the diagonal $n\times n$ matrices $D_\alpha(\lambda)$ with diagonal entries $e^{2\pi i\lambda\alpha_i}$, $i=1,\dots, n$ and $D_\beta(\lambda)$ with diagonal entries $e^{2\pi i\lambda\beta_i}$, $i=1,\dots,n$. The set $\Omega$ is spectral if and only if there exists a spectral unitary matrix $B$ for $\Omega$. Moreover, any spectrum for $\Omega$ is given by 

\begin{equation}
\Lambda_B:=\left\{\lambda\in\bc : \det(I-D_\beta(\lambda)^*BD_\alpha(\lambda))=0\right\}=\left\{\lambda\in\br : Be_\lambda(\alpha)=e_\lambda(\beta)\right\},
\label{3.6.1}
\end{equation}
for a unique spectral unitary matrix $B$ for $\Omega$. 
\end{corollary}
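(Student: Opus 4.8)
The plan is to read off the Corollary from the one-to-one correspondence already established in Theorem \ref{th3.4} together with the explicit spectral description of Theorem \ref{th2.12}, so that the work reduces to matching notation and exploiting the definition of a spectral matrix. First I would dispose of the equivalence ``$\Omega$ is spectral if and only if there exists a spectral unitary matrix $B$ for $\Omega$'': by Definition \ref{def0.1}, $\Omega$ is spectral precisely when a spectrum $\Lambda$ exists, i.e.\ when object (i) of Theorem \ref{th3.4} is nonempty, and the bijection (i)$\leftrightarrow$(iv) of that theorem says this happens exactly when a spectral unitary matrix $B$ (object (iv)) exists.

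Next, to obtain the formula for an arbitrary spectrum, I would start from a spectrum $\Lambda$ and run it through the chain of correspondences of Theorem \ref{th3.4}: $\Lambda$ (object (i)) determines a self-adjoint extension $A$ (object (iii)) with $\Lambda=\sigma(A)$, and $A$ in turn determines, via Theorem \ref{th1.7}, a spectral unitary matrix $B$ (object (iv)). Applying Theorem \ref{th2.12} to this $A$ gives $\sigma(A)=\{\lambda\in\bc:\det(I-E(-2\pi i\lambda\beta)BE(2\pi i\lambda\alpha))=0\}$. The bookkeeping step is then to note that for real $\lambda$ one has $E(2\pi i\lambda\alpha)=D_\alpha(\lambda)$ and $E(-2\pi i\lambda\beta)=D_\beta(\lambda)^*$, while self-adjointness of $A$ forces $\sigma(A)\subset\br$; hence the complex determinant set collapses to the real set appearing in \eqref{3.6.1}, and $\Lambda=\sigma(A)=\Lambda_B$.

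The one genuinely new computation is the identification of $\Lambda_B$ with $\{\lambda\in\br:Be_\lambda(\alpha)=e_\lambda(\beta)\}$, and this is where I would use that $B$ is spectral. The determinant condition is equivalent to the existence of a nonzero $c\in\bc^n$ with $(I-D_\beta(\lambda)^*BD_\alpha(\lambda))c=0$, i.e.\ (multiplying by the invertible $D_\beta(\lambda)$) with $BD_\alpha(\lambda)c=D_\beta(\lambda)c$. By Definition \ref{def3.4}, whenever $\operatorname*{ker}(I-D_\beta(\lambda)^*BD_\alpha(\lambda))$ is nonzero it equals $\bc\mathbf 1$, so I may take $c=\mathbf 1$; since $D_\alpha(\lambda)\mathbf 1=e_\lambda(\alpha)$ and $D_\beta(\lambda)\mathbf 1=e_\lambda(\beta)$, this reads exactly $Be_\lambda(\alpha)=e_\lambda(\beta)$. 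Conversely, that equation places $\mathbf 1$ in the kernel and forces the determinant to vanish, giving the reverse inclusion.

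Finally, uniqueness of $B$ is simply the injectivity of the correspondence (i)$\leftrightarrow$(iv) in Theorem \ref{th3.4}: distinct spectral matrices yield distinct self-adjoint extensions and hence, by the formula $\Lambda=\sigma(A)=\Lambda_B$ just derived, distinct spectra, so a given spectrum $\Lambda$ pins down a single spectral $B$. I do not anticipate a serious obstacle here, since the statement is essentially a repackaging; the only care needed is in the notational matching of $E(\cdot)$ with $D_\alpha,D_\beta$ and in confining $\Lambda_B$ to the real axis via self-adjointness before invoking spectrality to replace an arbitrary kernel vector $c$ by the specific vector $\mathbf 1$.
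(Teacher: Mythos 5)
Your proposal is correct and follows exactly the route the paper intends: the corollary is stated without a separate proof precisely because it is the composition of the bijections in Theorem \ref{th3.4} with the spectral description of Theorem \ref{th2.12}, plus the observation that spectrality of $B$ lets one replace an arbitrary kernel vector by $\mathbf 1$ (so the determinant set coincides with $\{\lambda\in\br : Be_\lambda(\alpha)=e_\lambda(\beta)\}$, realness coming from self-adjointness of $A$). Your notational matching of $E(\pm 2\pi i\lambda\,\cdot)$ with $D_\alpha(\lambda)$, $D_\beta(\lambda)^*$ and the uniqueness argument via injectivity of the correspondence are both exactly what the paper relies on.
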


\begin{proposition}\label{pr3.7}
Let $\Lambda$ be a spectrum for $\Omega$ and let $B$ be the corresponding spectral unitary matrix. Let $t_0\in\br$. Then the spectral unitary matrix for $\Lambda-t_0$ is $D_\beta(t_0)^*BD_\alpha(t_0)$.
\end{proposition}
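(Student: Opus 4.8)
The plan is to produce the spectral matrix for $\Lambda - t_0$ directly: guess that it is $B' := D_\beta(t_0)^* B D_\alpha(t_0)$ and then check it against the characterization in Corollary \ref{cor3.6}. First I would note that $B'$ is unitary, being a product of the unitaries $D_\beta(t_0)^*$, $B$, and $D_\alpha(t_0)$. The strategy is then to verify two things: that $B'$ is spectral for $\Omega$ in the sense of Definition \ref{def3.4}, and that $\Lambda_{B'} = \Lambda - t_0$. Since Corollary \ref{cor3.6} asserts that each spectrum of $\Omega$ arises from a \emph{unique} spectral unitary matrix, these two facts together identify $B'$ as the spectral matrix for $\Lambda - t_0$ (and, incidentally, re-prove that $\Lambda - t_0$ is again a spectrum).

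The heart of the argument is a single algebraic identity. Writing $D_\alpha(t_0) = E(2\pi i t_0\alpha)$ and $D_\beta(t_0)^* = E(-2\pi i t_0\beta)$, and using that these diagonal exponential matrices multiply by adding exponents, $E(2\pi i s\alpha)E(2\pi i s'\alpha) = E(2\pi i(s+s')\alpha)$ and likewise on the $\beta$-side, I would compute, for every $\lambda\in\bc$,
\begin{equation*}
I - E(-2\pi i\lambda\beta)\,B'\,E(2\pi i\lambda\alpha) = I - E(-2\pi i(\lambda+t_0)\beta)\,B\,E(2\pi i(\lambda+t_0)\alpha).
\end{equation*}
In other words, the spectral operator attached to $B'$ at $\lambda$ coincides with the one attached to $B$ at the shifted point $\lambda + t_0$.

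From this identity both required properties follow at once. Taking kernels gives $\operatorname*{ker}(I - E(-2\pi i\lambda\beta)B'E(2\pi i\lambda\alpha)) = \operatorname*{ker}(I - E(-2\pi i(\lambda+t_0)\beta)BE(2\pi i(\lambda+t_0)\alpha))$ for all $\lambda\in\bc$; since $B$ is spectral the right-hand side is $\{0\}$ or $\bc\mathbf 1$, hence $B'$ is spectral for $\Omega$. Taking determinants gives $\Lambda_{B'} = \{\lambda : \lambda + t_0\in\Lambda_B\} = \Lambda_B - t_0 = \Lambda - t_0$. Invoking the uniqueness clause of Corollary \ref{cor3.6} then finishes the proof. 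I do not expect any genuine obstacle here: the only care needed is bookkeeping of the exponents and signs, in particular that the $\beta$-factor carries the adjoint while the $\alpha$-factor does not, which is exactly what makes the internal shift come out as $\lambda + t_0$ and therefore the spectrum as $\Lambda - t_0$ rather than $\Lambda + t_0$.
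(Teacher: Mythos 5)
Your proposal is correct and takes essentially the same route as the paper: the single identity $I-D_\beta(\lambda)^*B'D_\alpha(\lambda)=I-D_\beta(\lambda+t_0)^*BD_\alpha(\lambda+t_0)$ with $B'=D_\beta(t_0)^*BD_\alpha(t_0)$ is the paper's entire computation, after which it concludes via the bijectivity of the $\Lambda\leftrightarrow B$ correspondence from Corollary \ref{cor3.6}. Your version merely makes explicit a step the paper leaves implicit, namely checking via kernels that $B'$ is itself spectral before invoking uniqueness.
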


\begin{proof}
We have, for all $\lambda\in\br$, 
$$I-D_\beta(\lambda)^*D_\beta(t_0)^*BD_\alpha(t_0)D_\alpha(\lambda)=I-D_\beta(\lambda+t_0)^*BD_\alpha(\lambda+t_0).$$
Therefore 
$$\Lambda_{D_\beta(t_0)^*BD_\alpha(t_0)}=\Lambda_B-t_0.$$
Since the correspondence between $\Lambda$ and $B$ is bijective, the result follows. 

\end{proof}

\begin{proposition}\label{pr3.8b}
Let $\Lambda$ be a spectrum for $\Omega$ and let $B$ be the corresponding spectral unitary matrix. Then $p$ is a period for $\Lambda$ if and only if 
\begin{equation}
BD_\alpha(p)=D_\beta(p)B.
\label{eq3.8b.1}
\end{equation}
\end{proposition}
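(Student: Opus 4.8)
The plan is to reduce the statement to the uniqueness of the spectral matrix attached to a spectrum, combined with the shift formula of Proposition \ref{pr3.7}. First I would record the elementary observation that, since translation by $p$ is a bijection of $\br$, the periodicity condition $\Lambda+p=\Lambda$ is equivalent to $\Lambda-p=\Lambda$. I would also note the purely algebraic identities $D_\alpha(\lambda+p)=D_\alpha(p)D_\alpha(\lambda)$ and $D_\beta(\lambda+p)=D_\beta(p)D_\beta(\lambda)$, valid because each family consists of commuting diagonal matrices; these are the only computational facts needed.

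Next I would apply Proposition \ref{pr3.7} with $t_0=p$: the spectral unitary matrix associated to the shifted spectrum $\Lambda-p$ is exactly $D_\beta(p)^*BD_\alpha(p)$. The crucial structural input is then the bijectivity of the correspondence between spectra for $\Omega$ and spectral unitary matrices, recorded in Corollary \ref{cor3.6} and used in the proof of Proposition \ref{pr3.7}: a spectrum determines its spectral matrix uniquely, and conversely.

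With these in hand the proof becomes a chain of equivalences. If $p$ is a period, then $\Lambda-p=\Lambda$, so the spectra $\Lambda-p$ and $\Lambda$ share the same spectral matrix; by uniqueness $D_\beta(p)^*BD_\alpha(p)=B$, and left-multiplying by the unitary $D_\beta(p)$ (so that $D_\beta(p)D_\beta(p)^*=I$) gives $BD_\alpha(p)=D_\beta(p)B$. Conversely, if $BD_\alpha(p)=D_\beta(p)B$ then $D_\beta(p)^*BD_\alpha(p)=B$, so the spectral matrix of $\Lambda-p$ equals that of $\Lambda$; bijectivity of the correspondence then forces $\Lambda-p=\Lambda$, that is, $p$ is a period.

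Since the argument runs entirely through Proposition \ref{pr3.7} and the uniqueness clause, there is no genuine obstacle; the only point requiring care is invoking the correct direction of the bijection in the converse. As an independent check of the easy implication, one can verify directly from the determinant description in \eqref{eq2.12.1} that $BD_\alpha(p)=D_\beta(p)B$ makes $D_\beta(\lambda+p)^*BD_\alpha(\lambda+p)=D_\beta(\lambda)^*BD_\alpha(\lambda)$ for every $\lambda$, whence $\lambda\in\Lambda_B$ if and only if $\lambda+p\in\Lambda_B$.
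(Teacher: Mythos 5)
Your proof is correct and follows essentially the same route as the paper: the paper's own proof consists precisely of noting that $p$ is a period iff $\Lambda-p=\Lambda$ and then invoking Proposition \ref{pr3.7} (whose proof rests on the bijectivity of the spectrum--matrix correspondence from Corollary \ref{cor3.6}), exactly as you do. Your added determinant check of the easy direction is a harmless bonus, not a divergence.
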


\begin{proof}
The number $p$ is a period for $\Lambda$ iff $\Lambda-p=\Lambda$. The rest follows from Proposition \ref{pr3.7}. 
\end{proof}

\begin{definition}\label{def3.9}
For $p\in\br\setminus\{0\}$. Define the equivalence relation $\equiv_p$ on $\br$ by $t\equiv_p s$ iff $p(t-s)\in\bz$. 
\end{definition}

\begin{theorem}\label{th3.9}
Assume $\Omega$ is spectral with spectrum $\Lambda$ of period $p$. Assume in addition that $0\in\Lambda$. Let $B$ be the spectral unitary matrix associated to $\Lambda$. Then there exists a permutation $\sigma$ of the set $\{1,\dots, n\}$ with the property that $\beta_i\equiv_p\alpha_{\sigma(i)}$ for all $i=1,\dots,n$. The matrix $B$ has the following properties
\begin{enumerate}
	\item $b_{ij}=0$ if $\alpha_j\not\equiv_p\beta_i$.
	\item $\sum_{j: \alpha_j\equiv_p\beta_i}b_{ij}=1$ for all $i=1,\dots,n$.
	\item $\sum_{i: \beta_i\equiv_p\alpha_j}b_{ij}=1$ for all $j=1,\dots,n$.
	\item For every $\lambda\in\Lambda$, $\sum_{j: \alpha_j\equiv_p\beta_i}e^{-2\pi i(\beta_i-\alpha_j)\lambda}b_{ij}=1$ for all $i=1,\dots,n$.
	\item For every $\lambda\in\Lambda$, $\sum_{i: \beta_i\equiv_p\alpha_j}e^{-2\pi i(\beta_i-\alpha_j)\lambda}b_{ij}=1$ for all $j=1,\dots,n$.
	\item There is no proper subset $I$ of $\{1,\dots,n\}$, $I\neq \ty$, $I\neq \{1,\dots,n\}$ with the property that for all $i\in I, j\not\in I$, $\beta_i\not\equiv_p\alpha_j$.
\end{enumerate}

\end{theorem}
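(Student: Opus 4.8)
The plan is to derive all six properties, together with the permutation, from just two inputs: the commutation relation of Proposition \ref{pr3.8b} (which encodes periodicity) and the normalization forced by $0\in\Lambda$ (which pins down $\operatorname*{ker}(I-B)$). First, since $p$ is a period, Proposition \ref{pr3.8b} gives $BD_\alpha(p)=D_\beta(p)B$; comparing $(i,j)$ entries yields $b_{ij}\bigl(e^{2\pi i p\alpha_j}-e^{2\pi i p\beta_i}\bigr)=0$, so $b_{ij}=0$ unless $e^{2\pi i p\alpha_j}=e^{2\pi i p\beta_i}$, i.e. $\alpha_j\equiv_p\beta_i$. This is (i). Next, since $0\in\Lambda=\Lambda_B$, Corollary \ref{cor3.6} gives $Be_0(\alpha)=e_0(\beta)$, i.e. $B\mathbf 1=\mathbf 1$ because $e_0(\alpha)=e_0(\beta)=\mathbf 1$; and because $B$ is spectral, $\operatorname*{ker}(I-B)$ is $\{0\}$ or $\bc\mathbf 1$, hence equals $\bc\mathbf 1$ as it is nonzero. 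From $B\mathbf 1=\mathbf 1$ and $B^*\mathbf 1=B^{-1}\mathbf 1=\mathbf 1$, combined with (i), I read off (ii) and (iii) directly.

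For (iv) and (v) I would fix any $\lambda\in\Lambda$ and use $Be_\lambda(\alpha)=e_\lambda(\beta)$ (resp. its adjoint form $B^*e_\lambda(\beta)=e_\lambda(\alpha)$) from Corollary \ref{cor3.6}. Writing the $i$-th coordinate of the first relation gives $\sum_j b_{ij}e^{2\pi i\lambda\alpha_j}=e^{2\pi i\lambda\beta_i}$; by (i) only the terms with $\alpha_j\equiv_p\beta_i$ survive, and dividing by the phase $e^{2\pi i\lambda\beta_i}$ yields (iv). For (v) I write the $j$-th coordinate of $B^*e_\lambda(\beta)=e_\lambda(\alpha)$, namely $\sum_i \overline{b_{ij}}\,e^{2\pi i\lambda\beta_i}=e^{2\pi i\lambda\alpha_j}$, restrict to $\beta_i\equiv_p\alpha_j$ by (i), divide by $e^{2\pi i\lambda\alpha_j}$, and conjugate (using that the right-hand side is $1$).

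The combinatorial heart is that (i) says $B$ is block diagonal with respect to the partition of row indices by the class $[\beta_i]$ and column indices by the class $[\alpha_j]$: $b_{ij}=0$ whenever $[\beta_i]\neq[\alpha_j]$. To produce $\sigma$, I fix an $\equiv_p$-class $c$ and set $R_c=\{i:[\beta_i]=c\}$, $S_c=\{j:[\alpha_j]=c\}$. The columns of $B$ indexed by $S_c$ are orthonormal vectors supported on the coordinates $R_c$, forcing $|S_c|\le|R_c|$, while the rows indexed by $R_c$ are orthonormal and supported on $S_c$, forcing $|R_c|\le|S_c|$; hence $|R_c|=|S_c|$ for every class. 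Choosing a bijection $R_c\to S_c$ for each class and assembling them gives a permutation $\sigma$ of $\{1,\dots,n\}$ with $\beta_i\equiv_p\alpha_{\sigma(i)}$.

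Finally, for (vi), suppose a proper nonempty $I$ had the stated property. Then (i) forces the block $B[I,I^c]=0$; writing $B$ with the indices in $I$ first and imposing $B^*B=I$ shows the $(I^c,I^c)$ block is unitary, whence the $(I^c,I)$ block also vanishes, so $B$ is fully block diagonal. But then $B\mathbf 1=\mathbf 1$ makes both $\mathbf 1_I$ and $\mathbf 1_{I^c}$ (extended by zero) lie in $\operatorname*{ker}(I-B)$, giving $\dim\operatorname*{ker}(I-B)\ge 2$ and contradicting $\operatorname*{ker}(I-B)=\bc\mathbf 1$. I expect the only genuine care to be in these two block-matrix arguments — verifying that a unitary matrix with a vanishing off-diagonal block truly decouples, and the cardinality balance $|R_c|=|S_c|$ — while the remaining steps are routine bookkeeping with (i) and the exponential phases.
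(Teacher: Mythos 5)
Your proof is correct and follows essentially the same route as the paper's: property (i) from the entrywise form of $BD_\alpha(p)=D_\beta(p)B$ (Proposition \ref{pr3.8b}); (iv)--(v) from $Be_\lambda(\alpha)=e_\lambda(\beta)$ for $\lambda\in\Lambda$, equivalently $\mathbf 1\in\ker\bigl(I-D_\beta(\lambda)^*BD_\alpha(\lambda)\bigr)$, with (ii)--(iii) the case $\lambda=0$; and (vi) by showing the vanishing block $b_{ij}=0$ ($i\in I$, $j\notin I$) forces full block-diagonality of the unitary $B$, producing the indicator vector of $I$ as a second eigenvector of eigenvalue $1$ and contradicting spectrality. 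The only cosmetic difference is your construction of $\sigma$ by the per-class cardinality balance $|R_c|=|S_c|$ via orthonormality of the supported rows and columns, whereas the paper reads the same count off as equality of eigenvalue multiplicities of $D_\alpha(p)$ and $D_\beta(p)$ from $BD_\alpha(p)B^*=D_\beta(p)$ --- these are two phrasings of one fact, and both arguments are sound.
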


\begin{proof}
Since we have $\Lambda=\Lambda+p$ it follows from Proposition \ref{pr3.8b} that $D_\beta(p)^*BD_\alpha(p)=B$. This can be rewritten as $BD_\alpha(p)B^*=D_\beta(p)$. But, since $B$ is unitary, this means that the eigenvalues of $D_\alpha(p)$ and $D_\beta(p)$ are the same, with the same multiplicity so there is a permutation $\sigma$ of $\{1,\dots, n\}$ such that 
$e^{2\pi ip\beta_i}=e^{2\pi i p\alpha_{\sigma(i)}}$ which means that $\beta_i\equiv_p\alpha_{\sigma(i)}$.

Since we have $BD_\alpha(p)=D_\beta(p)B$, this means that $b_{ij}e^{2\pi i p\alpha_j}=e^{2\pi ip\beta_i}b_{ij}$ for all $i,j=1,\dots,n$. Therefore, if $\alpha_j\not\equiv_p\beta_i$ then $b_{ij}=0$. This proves (i).

(iv) follows from the fact that $B$ has to be spectral, so for $\lambda\in\Lambda$, the constant vector $\mathbf 1$ is an eigenvector with eigenvalue 1 for $D_\beta(\lambda)^*BD_\alpha(\lambda)$. 
Then the same has to be true for the adjoint of this unitary matrix and this implies (v). (ii) and (iii) are particular cases of (iv) and (v) for $\lambda=0$.

For (vi) assume there exists such a proper set $I$. The matrix $(b_{ij})_{i,j\in I}$ is unitary. Indeed, we have for $i,i'\in I$, using (i):
$$\delta_{ii'}=\sum_{j=1}^nb_{ij}\cj b_{i'j}=\sum_{j\in I}b_{ij}\cj b_{i'j}.$$

Then, since the matrix $B$ is unitary we have, for $j\in I$:
$$\sum_{i=1}^n|b_{ij}|^2=1=\sum_{i\in I}|b_{ij}|^2.$$
This implies that $b_{ij}=0$ for $i\not\in I$, $j\in I$.

Consider now the vector $a_i=0$ for $i\not\in I$, $a_i=1$ for $i\in I$. Then 
$$\sum_{j=1}^nb_{ij}a_j=\sum_{j\in I}b_{ij}=\left\{\begin{array}{cc}
0&\mbox{ if }i\not\in I\\
\sum_{j=1}^nb_{ij}=1&\mbox{ if }i\in I.
\end{array}\right.=a_i$$

Thus the vector $(a_1,\dots,a_n)$ is an eigenvector of eigenvalue $1$ for $B$. But this contradicts the fact that $B$ is spectral so $\mathbf 1$ is the only such eigenvector.
\end{proof}

\begin{corollary}\label{cor3.10}
Suppose $\Omega=\cup_{i=1}^n(\alpha_i,\beta_i)$ has a spectrum with period $p$. Then there exist some numbers $k_i\in\frac1p\bz$, $i=1,\dots,n$ such that $\Omega':=\cup_{i=1}^n[\alpha_i-k_i,\beta_i-k_i)$ is a disjoint union and  $\Omega'$ is a disjoint union of intervals with lengths in $\frac1p\bz$. 
\end{corollary}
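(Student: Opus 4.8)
The plan is to reduce to the hypotheses of Theorem \ref{th3.9}, extract the permutation $\sigma$ it supplies, and then exploit the cycle structure of $\sigma$ to reassemble the intervals by $\frac1p\bz$-translations. First I would arrange that $0\in\Lambda$: if $\Lambda$ is a spectrum for $\Omega$ of period $p$, pick $t_0\in\Lambda$; then $\Lambda-t_0$ is again a spectrum for $\Omega$ (multiplication by $e_{-t_0}$ is a unitary of $L^2(\Omega)$ carrying $e_\lambda$ to $e_{\lambda-t_0}$), it still has period $p$ since $(\Lambda-t_0)+p=(\Lambda+p)-t_0=\Lambda-t_0$, and it contains $0$. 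Applying Theorem \ref{th3.9} to $\Lambda-t_0$ yields a permutation $\sigma$ of $\{1,\dots,n\}$ with $\beta_i\equiv_p\alpha_{\sigma(i)}$, equivalently $\beta_i-\alpha_{\sigma(i)}\in\frac1p\bz$, for every $i$. This relation involves only the endpoints of $\Omega$, so $t_0$ plays no further role.

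Next I would treat each cycle of $\sigma$ separately. Fix a cycle $(i_1,\dots,i_m)$ with $\sigma(i_l)=i_{l+1}$ (indices mod $m$), so $\beta_{i_l}\equiv_p\alpha_{i_{l+1}}$ for all $l$, and in particular the cycle closes: $\beta_{i_m}\equiv_p\alpha_{i_1}$. I would choose $k_{i_1}\in\frac1p\bz$ freely and set recursively $k_{i_{l+1}}=k_{i_l}+(\alpha_{i_{l+1}}-\beta_{i_l})$ for $l=1,\dots,m-1$; since each increment lies in $\frac1p\bz$, all $k_{i_l}\in\frac1p\bz$. By construction $\beta_{i_l}-k_{i_l}=\alpha_{i_{l+1}}-k_{i_{l+1}}$, so the half-open translated intervals $[\alpha_{i_l}-k_{i_l},\beta_{i_l}-k_{i_l})$ abut with neither gap nor overlap and tile the single interval $[\alpha_{i_1}-k_{i_1},\beta_{i_m}-k_{i_m})$. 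A short telescoping computation shows its length equals $\sum_{l=1}^m(\beta_{i_l}-\alpha_{i_l})>0$, and also equals $(\beta_{i_m}-\alpha_{i_1})-\sum_{l=1}^{m-1}(\alpha_{i_{l+1}}-\beta_{i_l})$; the subtracted sum lies in $\frac1p\bz$, and $\beta_{i_m}-\alpha_{i_1}\in\frac1p\bz$ by the cycle-closing relation, so this length lies in $\frac1p\bz$.

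Finally I would glue the cycles together. Each cycle carries one free parameter, its base value $k_{i_1}\in\frac1p\bz$; replacing it by $k_{i_1}+\frac{m'}{p}$ translates that whole merged interval by $\frac{m'}{p}$ while keeping every $k_i$ in $\frac1p\bz$. As there are finitely many cycles, each producing a merged interval of fixed length in $\frac1p\bz$, I would adjust the base parameters to place these merged intervals pairwise disjointly (for instance in increasing order with gaps). Then the $n$ translated intervals $[\alpha_i-k_i,\beta_i-k_i)$ are pairwise disjoint and $\Omega'$ is exactly the disjoint union of the merged intervals, each of length in $\frac1p\bz$, which is the desired conclusion.

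The main obstacle to anticipate is the tempting but false hope of chaining all $n$ intervals into one: the matching $\beta_i\equiv_p\alpha_{\sigma(i)}$ only closes up around each individual cycle of $\sigma$, and globally the constituent lengths are strictly positive and cannot sum around a length-zero loop, so one must work cycle by cycle and recover cross-cycle disjointness from the free translation parameter rather than forcing a single component. Property (vi) of Theorem \ref{th3.9} is what one would invoke to force a single connected $\Omega'$, but it is not needed for the stated (plural) conclusion.
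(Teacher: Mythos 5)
Your proof is correct and follows essentially the same route as the paper's: reduce to $0\in\Lambda$ by translating the spectrum, apply Theorem \ref{th3.9} to obtain the permutation $\sigma$ with $\beta_i\equiv_p\alpha_{\sigma(i)}$, chain the intervals along each cycle via recursively chosen $k_i\in\frac1p\bz$ so consecutive translates share an endpoint, verify by the same telescoping congruence that each merged interval has length in $\frac1p\bz$, and use the free base translation of each cycle to place the merged intervals disjointly. Your explicit cycle-by-cycle organization is just a slightly tidier presentation of the paper's orbit-following construction.
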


\begin{proof}
By translating, we can assume the spectrum contains 0. Then, by Theorem \ref{th3.9}, there exists a permutation $\sigma$ such that $\beta_i\equiv_p\alpha_{\sigma(i)}$, $i=1,\dots,n$. 
Take the interval $[\alpha_1,\beta_1)$ and define $k_1:=0$. We have $\beta_1\equiv\alpha_{\sigma(1)}$. If $\sigma(1)=1$ we stop. If not, we have $k_{\sigma(1)}:=\alpha_{\sigma(1)}-\beta_1\in\frac1p\bz$ and note that the intervals  $[\alpha_1,\beta_1)$ and $[\alpha_{\sigma(1)}-k_{\sigma(1)},\beta_{\sigma(1)}-k_{\sigma(1)})$ have a common endpoint. By induction suppose we defined $k_{\sigma^i(1)}\in\frac1p\bz$ for $i=0,\dots,l$ such that the intervals 
$[\alpha_{\sigma^i(1)},\beta_{\sigma^i(1)})-k_{\sigma^i(1)}$ are disjoint and two consecutive ones have a common endpoint and $\sigma^i(1)\neq 1$. If $\sigma^{l+1}(1)=\sigma^i(1)$ for some $i\leq l$ then $\sigma^{l+1-i}(1)=1$ and, by the induction hypothesis, this is possible only when $i=0$. In this case we stop and we closed a cycle for $1$. If this is not the case then we have $k_{\sigma^{l+1}(1)}:=\alpha_{\sigma^{l+1}(1)}-\beta_{\sigma^l(1)}+k_{\sigma^l(1)}\in\frac{1}{p}\bz$. Therefore the intervals $[\alpha_{\sigma^l(1)},\beta_{\sigma^l(1)})-k_{\sigma^l(1)}$ and
$[\alpha_{\sigma^{l+1}(1)},\beta_{\sigma^{l+1}(1)})-k_{\sigma^{l+1}(1)}$ have a common endpoint.  

When we close a cycle, say after $l$ steps, we compute the length of the interval formed with the union of $[\alpha_{\sigma^i(1)},\beta_{\sigma^i(1)})-k_{\sigma^i(1)}$, $i=1,\dots,l$. This is 
congruent in $\frac1p\bz$ to 
$$\beta_1-\alpha_1+\beta_{\sigma(1)}-\alpha_{\sigma(1)}+\dots+\beta_{\sigma^l(1)}-\alpha_{\sigma^l(1)}\equiv_p\beta_1-\alpha_{\sigma^l(1)}=\beta_{\sigma^{l+1}(1)}-\alpha_{\sigma^l(1)}\equiv_p 0.$$
Hence the length of this interval is in $\frac1p\bz$. 

If this cycle $1,\sigma(1),\dots, \sigma^l(1)$ covers the entire set $\{1,\dots,n\}$ we are done. If not, pick a point outside the cycle and repeat the same procedure; the first interval can be translated far enough by an element in $\frac1p\bz$ to ensure the resulting interval is disjoint from the one constructed for the previous cycles.

\end{proof}

\begin{proposition}\label{pr3.12}
Let $\Lambda$ be a spectrum for $\Omega$ and let $B$ be the associated spectral matrix. For $t:=(t_1,\dots,t_n)\in\br^n$, define the matrices 
\begin{equation}
M_{\alpha,t}:=\frac{1}{\sqrt{N}}\left(e^{2\pi i\alpha_i t_j}\right)_{i,j=1}^n,\quad
M_{\beta,t}:=\frac{1}{\sqrt{N}}\left(e^{2\pi i\beta_i t_j}\right)_{i,j=1}^n.
\label{eq3.12.1}
\end{equation}
Then for any vector $\lambda=(\lambda_1,\dots,\lambda_n)$ in $\Lambda^n$, the matrix $M_{\alpha,\lambda}$ is non-singular/unitary if and only of $M_{\beta,\lambda}$ is non-singular/unitary and in this case 
\begin{equation}
B=M_{\beta,\lambda}M_{\alpha,\lambda}^{-1}.
\label{eq3.12.2}
\end{equation}

\end{proposition}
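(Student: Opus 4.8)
The plan is to reduce the entire statement to the single matrix identity $B M_{\alpha,\lambda} = M_{\beta,\lambda}$, valid for \emph{every} $\lambda \in \Lambda^n$, and then read off all the assertions from it using only that $B$ is unitary. First I would observe that since $\lambda=(\lambda_1,\dots,\lambda_n)\in\Lambda^n$, each component $\lambda_j$ lies in the spectrum $\Lambda$. By Corollary \ref{cor3.6} (equivalently, by applying Proposition \ref{pr2.10} to the eigenfunctions $e_{\lambda_j}$, which lie in $\domain(A)$ via the correspondence of Theorem \ref{th3.4}), this gives $B\,e_{\lambda_j}(\alpha)=e_{\lambda_j}(\beta)$ for each $j$. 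Since the $j$-th column of $M_{\alpha,\lambda}$ is precisely $\frac{1}{\sqrt N}e_{\lambda_j}(\alpha)$ and the $j$-th column of $M_{\beta,\lambda}$ is $\frac{1}{\sqrt N}e_{\lambda_j}(\beta)$, multiplying column by column yields $B M_{\alpha,\lambda}=M_{\beta,\lambda}$. The essential point I want to stress is that this identity holds with \emph{no} invertibility hypothesis on either matrix.

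With this identity in hand, everything follows immediately. Because $B$ is unitary, hence invertible, $M_{\beta,\lambda}=B M_{\alpha,\lambda}$ is non-singular if and only if $M_{\alpha,\lambda}$ is non-singular; and whenever this holds, right-multiplying by $M_{\alpha,\lambda}^{-1}$ produces the stated formula $B=M_{\beta,\lambda}M_{\alpha,\lambda}^{-1}$. For the unitarity equivalence I would invoke that a product of unitaries is unitary: if $M_{\alpha,\lambda}$ is unitary then so is $M_{\beta,\lambda}=B M_{\alpha,\lambda}$, and conversely $M_{\alpha,\lambda}=B^* M_{\beta,\lambda}$ shows $M_{\alpha,\lambda}$ is unitary whenever $M_{\beta,\lambda}$ is.

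There is essentially no obstacle here; the only things to handle carefully are the bookkeeping of indices (the columns of $M_{\alpha,\lambda}$ are indexed by the spectral values $\lambda_j$ while the rows are indexed by the endpoints $\alpha_i$) and the observation that the common normalization factor $\frac{1}{\sqrt N}$ appears on both sides of the boundary identity and in numerator and denominator of the formula for $B$, so it cancels in each case (the choice $N=n$ is what makes unitarity of these matrices possible, but the equivalence itself is insensitive to the value of $N$). The entire conceptual content is carried by Corollary \ref{cor3.6}, which encodes the spectral condition $B\,e_\lambda(\alpha)=e_\lambda(\beta)$ at points of the spectrum.
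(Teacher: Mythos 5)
Your proof is correct and takes essentially the same route as the paper: both reduce everything to the column identity $B\,e_{\lambda_j}(\alpha)=e_{\lambda_j}(\beta)$ for $\lambda_j\in\Lambda$, i.e.\ $BM_{\alpha,\lambda}=M_{\beta,\lambda}$, and then deduce the non-singularity/unitarity equivalences and the formula $B=M_{\beta,\lambda}M_{\alpha,\lambda}^{-1}$ from the unitarity of $B$. The only (cosmetic) difference is the source of that identity: you cite Corollary \ref{cor3.6} (equivalently Proposition \ref{pr2.10}), whereas the paper invokes Theorem \ref{th3.9}; your citation is if anything cleaner, since Theorem \ref{th3.9} carries the extra normalization $0\in\Lambda$ that Proposition \ref{pr3.12} does not assume.
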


\begin{proof}
Using Theorem \ref{th3.9} for $\lambda_1,\dots,\lambda_n$, we obtain for $i,k=1,\dots,k$:
$$\sum_{j=1}^nb_{ij}e^{2\pi i\alpha_j\lambda_k}=e^{2\pi i\beta_i\lambda_k}.$$
This means that $BM_{\alpha,\lambda}=M_{\beta,\lambda}$. Since $B$ is unitary, the equivalence follows and also \eqref{eq3.12.2}
\end{proof}

\begin{theorem}\label{th3.13}
We use the same notation as in Proposition \ref{pr3.12}. The set $\Omega$ is spectral if and only if there exist points in $\br$, $\lambda=(\lambda_1,\dots,\lambda_n)$ such that $M_{\alpha,\lambda}$ is non-singular and the matrix 
\begin{equation}
B=M_{\beta,\lambda}M_{\alpha,\lambda}^{-1}
\label{eq3.13.1}
\end{equation}
is a spectral unitary matrix for $\Omega$. 
\end{theorem}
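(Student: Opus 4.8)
The plan is to derive both implications from the correspondences already set up, so that the only genuine content is an independence statement for boundary values of eigenfunctions.

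For the ``if'' direction, suppose $\lambda=(\lambda_1,\dots,\lambda_n)\in\br^n$ exists with $M_{\alpha,\lambda}$ non-singular and $B=M_{\beta,\lambda}M_{\alpha,\lambda}^{-1}$ a spectral unitary matrix for $\Omega$. Then by Corollary \ref{cor3.6} the mere existence of a spectral unitary matrix already forces $\Omega$ to be spectral, so nothing further is needed. For the ``only if'' direction, assume $\Omega$ is spectral. By Corollary \ref{cor3.6} there is a spectral unitary matrix $B$ together with a spectrum $\Lambda=\Lambda_B$, and for each $\lambda\in\Lambda$ the single exponential $e_\lambda$ is an eigenfunction, so $Be_\lambda(\alpha)=e_\lambda(\beta)$ in the notation of Theorem \ref{th1.7}. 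By Proposition \ref{pr3.12}, as soon as we can pick $\lambda_1,\dots,\lambda_n\in\Lambda$ for which $M_{\alpha,\lambda}$ is non-singular, the identity $BM_{\alpha,\lambda}=M_{\beta,\lambda}$ yields $B=M_{\beta,\lambda}M_{\alpha,\lambda}^{-1}$, and this $B$ is spectral unitary by construction. Thus the theorem reduces entirely to the claim that the vectors $\{e_\lambda(\alpha):\lambda\in\Lambda\}$ span $\bc^n$, i.e. that $\det M_{\alpha,\lambda}\neq0$ for some $\lambda\in\Lambda^n$.

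To prove this spanning statement I would argue by contradiction. If the $e_\lambda(\alpha)$ lie in a proper subspace, choose $d=(d_1,\dots,d_n)\neq0$ orthogonal to all of them, so that the exponential polynomial $F(w):=\sum_{i=1}^n\overline{d_i}\,e^{2\pi i\alpha_i w}$ vanishes at every point of $\Lambda$. After a translation (legitimate, since translating frequencies only rescales the rows of $M_{\alpha,\lambda}$ and, by Proposition \ref{pr3.7}, keeps $B$ spectral) I may assume $0\in\Lambda$ and, by \cite{BoMa11,IoKo12}, that $\Lambda=\{\lambda_0,\dots,\lambda_{k-1}\}+p\bz$ with period $p$. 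Evaluating $F$ on each progression $\lambda_r+p\bz$ and using linear independence of the distinct geometric sequences $m\mapsto(e^{2\pi i\alpha_i p})^m$ (a Vandermonde argument), the vanishing decouples across the classes of the relation $\equiv_p$ of Definition \ref{def3.9}: for every class $G$ and every $r$ one obtains $\sum_{i\in G}\overline{d_i}\,e^{2\pi i\alpha_i\lambda_r}=0$. Writing $\alpha_i=a_G+n_i/p$ with distinct integers $n_i$ inside a fixed class, this is a generalized Vandermonde system in the distinct unit-circle nodes $\zeta_r=e^{2\pi i\lambda_r/p}$.

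The main obstacle is precisely the non-degeneracy of this per-class system: unlike the positive-real case, distinctness of the $\alpha_i$ does \emph{not} by itself make a generalized Vandermonde matrix over the circle invertible, so I must feed in the spectral structure to exclude a nontrivial $d$. I would do this by combining Theorem \ref{th3.9} with Proposition \ref{pr4.19}: the permutation $\sigma$ with $\beta_i\equiv_p\alpha_{\sigma(i)}$, the connectivity property (vi), and the fact that $\{\lambda_0,\dots,\lambda_{k-1}\}$ is a spectrum for $\tfrac1p\Omega_x$ together constrain the nodes $\zeta_r$ relative to the exponents $n_i$ --- in effect forcing the number $k$ of frequencies per period to exceed the spread of $\{n_i\}_{i\in G}$, so that the Laurent polynomial $\sum_{i\in G}\overline{d_i}X^{n_i}$ cannot vanish at all the $\zeta_r$ unless it is zero. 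This gives $d_i=0$ for all $i\in G$, hence $d=0$, the desired contradiction; it produces $\lambda\in\Lambda^n$ with $M_{\alpha,\lambda}$ non-singular and completes the proof.
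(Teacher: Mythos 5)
Your reduction is exactly the paper's: the ``if'' direction is immediate from Corollary \ref{cor3.6}, and for ``only if'' Proposition \ref{pr3.12} reduces everything to producing $\lambda_1,\dots,\lambda_n\in\Lambda$ with $M_{\alpha,\lambda}$ non-singular, i.e.\ to the claim that $\{e_\lambda(\alpha):\lambda\in\Lambda\}$ spans $\bc^n$. Your dualization and the Vandermonde decoupling across $\equiv_p$-classes are also sound. But the last step, which you yourself flag as ``the main obstacle,'' is a genuine gap, and the mechanism you propose for closing it is false: there is no structural fact forcing the number $k$ of frequencies per period to exceed the spread of the integers $n_i$ within a class. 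Concretely, $\Omega=(0,1)\cup(10,11)$ is spectral with spectrum $\Lambda=\{0,\frac{1}{20}\}+\bz$ (orthogonality and completeness are checked directly by splitting $f\in L^2(\Omega)$ into its restrictions to the two intervals); the minimal period is $p=1$, so $k=2$, while $\alpha_1=0$ and $\alpha_2=10$ lie in a single $\equiv_1$-class with exponents $\{0,10\}$ of spread $10$. A two-term Laurent polynomial $\overline{d_1}+\overline{d_2}X^{10}$ has ten unit-circle roots available, so no degree count rules out vanishing at the $k=2$ nodes $\zeta_0,\zeta_1$; what saves this example is the arithmetic fact $\zeta_1^{10}=e^{\pi i}=-1\neq 1=\zeta_0^{10}$, which is precisely an instance of the per-class non-degeneracy you are trying to prove. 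Neither Theorem \ref{th3.9}(vi) nor Proposition \ref{pr4.19} supplies that non-degeneracy --- they constrain $B$ and the fibers $\Omega_x$, not the left endpoints $\alpha_i$ relative to the $\lambda_r$ --- and you offer no argument beyond the assertion that these results ``constrain the nodes,'' so the crux of the theorem remains unproved.

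The paper proves the spanning claim by a soft functional-analytic argument that bypasses exponential polynomials entirely. Let $A$ be the self-adjoint extension attached to $\Lambda$ with boundary matrix $B$ (Theorem \ref{th3.4}). For $f\in\domain(A)$, the expansion $f=\sum_\lambda c_\lambda e_\lambda$ converges not only in $L^2(\Omega)$ but in the graph norm, because $Af$ expands in the same eigenbasis; by Lemma \ref{lem1.9} the evaluations $f\mapsto f(\alpha_i)$ are continuous in the graph inner product, hence $f(\alpha)\in\clspan\{e_\lambda(\alpha):\lambda\in\Lambda\}$. Since for every $v\in\bc^n$ there is $f\in\Dmax$ with $f(\alpha)=v$ and $f(\beta)=Bv$, so that $f\in\domain(A)$, these boundary vectors exhaust $\bc^n$; as $\bc^n$ is finite-dimensional, the vectors $e_\lambda(\alpha)$ must span it, and one selects $\lambda_1,\dots,\lambda_n\in\Lambda$ with $e_{\lambda_1}(\alpha),\dots,e_{\lambda_n}(\alpha)$ linearly independent, making $M_{\alpha,\lambda}$ non-singular. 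If you want to keep your hard-analysis route you would in effect have to re-prove this spanning statement class by class, with the full strength of completeness of $\{e_\lambda\}$ fed in somewhere; the graph-norm argument is where that completeness enters cleanly, and I recommend replacing your final paragraph with it.
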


\begin{proof}
The reverse implication follows from Corollary \ref{cor3.6}. For the direct implication, let $\Lambda$ be a spectrum for $\Omega$. Let $A$ be the associated self-adjoint extension of $\Ds$ and let $B$ be the corresponding spectral unitary matrix, see Theorem \ref{th3.4}. By Proposition \ref{pr3.12} it is enough to show that there exists $\lambda=(\lambda_1,\dots,\lambda_n)$ with $\lambda_i\in\Lambda$ such that the matrix $M_{\alpha,\lambda}$ is non-singular. 

Take an arbitrary $f$ in the domain of $A$. Since $\{e_\lambda :\lambda\in\Lambda\}$ is an orthogonal basis, there exist $c_\lambda\in\bc$ such that $\sum_\lambda c_\lambda e_\lambda=f$ with convergence in $L^2(\Omega)$. Then, using the spectral decomposition of $A$ (Theorem \ref{th2.12}), we have that $f=\sum_{\lambda\neq 0}\frac{c_\lambda}{\lambda}e_\lambda+c_0e_0$, where $c_0=0$ if $0$ is not in $\Lambda$ and $c_0=\frac{1}{|\Omega|}\ip{f}{e_0}$ if $0\in\Lambda$. 
This shows that $\sum_{\lambda\neq 0} \frac{c_\lambda}{\lambda} e_\lambda +c_0e_0$ converges to $f$ in the graph inner product. Then with Lemma \ref{lem1.9} we see that $\sum_{\lambda\neq 0}c_\lambda e_\lambda(\alpha)+c_0e_0(\alpha)$ converges in $\bc^n$ to $f(\alpha)$. Since $f$ was arbitrary in the domain of $A$, we get that the vectors $(e_\lambda(\alpha))_{\lambda\in\Lambda}$ span $\bc^n$. So we can pick $\lambda_1,\dots,\lambda_n$ such that the vectors $e_{\lambda_1}(\alpha),\dots,e_{\lambda_n}(\alpha)$ are linearly independent. This implies that the matrix $M_{\alpha,\lambda}$ is non-singular. 

\end{proof}

\begin{theorem}\label{th3.14}
Assume that $\Omega$ is spectral with spectrum $\Lambda$ that contains $0$ and has period $p$. Suppose there exists an $i\in\{1,\dots,n\}$ such that there exists a unique $j\in\{1,\dots,n\}$ with $\beta_i\equiv_p\alpha_j$. Then 
\begin{enumerate}
	\item The spectrum $\Lambda$ is contained in $\frac{1}{\beta_i-\alpha_j}\bz$. 
	\item The translate of the interval $J_j$, $J_j'=J_j+\beta_i-\alpha_j=(\beta_i,\beta_j+\beta_i-\alpha_j)$, which has a common endpoint with the interval $J_i$, is disjoint from all $J_k$ with $k\neq i,j$. 
	\item The set $\Omega':=J_j'\cup_{k\neq j}J_k$ is spectral with the same spectrum $\Lambda$. 
	\item The statements analogous to (ii) and (iii) also hold when one replaces the interval $J_i$ by $J_i'=J_i+\alpha_j-\beta_i=(\alpha_i+\alpha_j-\beta_i,\alpha_j)$.

\end{enumerate}

\end{theorem}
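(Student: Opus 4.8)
The plan is to handle the four parts in order, distilling from Theorem~\ref{th3.9} one arithmetic identity and then a geometric \emph{packing} consequence that powers the rest. Throughout set $d:=\beta_i-\alpha_j$ for the fixed $i$ and its unique partner $j$ with $\beta_i\equiv_p\alpha_j$; note $d\neq0$ since all $2n$ endpoints are distinct. For (i): because $j$ is the \emph{unique} index with $\alpha_j\equiv_p\beta_i$, the sum in Theorem~\ref{th3.9}(iv) collapses to the single term $e^{-2\pi i d\lambda}b_{ij}=1$, while the same collapse in Theorem~\ref{th3.9}(ii) gives $b_{ij}=1$. Hence $e^{-2\pi i d\lambda}=1$, i.e. $d\lambda\in\bz$, for every $\lambda\in\Lambda$; this is precisely $\Lambda\subset\frac1d\bz$.

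The heart of the argument is the packing lemma I would isolate next: \emph{if $\Lambda$ is a spectrum for $\Omega$ and $\Lambda\subset\frac1d\bz$, then $\Omega\cap(\Omega+kd)$ is Lebesgue-null for every $k\in\bz\setminus\{0\}$.} Indeed each $e_\lambda$ with $\lambda\in\frac1d\bz$ is $d$-periodic. If some $F:=\Omega\cap(\Omega+k_0d)$ had positive measure with $k_0\neq0$, I would pick a subinterval $E\subset F$ of length less than $|k_0d|$, so that $E$ and $E-k_0d$ are \emph{disjoint} subsets of $\Omega$; then $f:=\chi_E-\chi_{E-k_0d}$ is a nonzero element of $L^2(\Omega)$ with $\ip{f}{e_\lambda}=\int_E\cj{e_\lambda}-\int_{E-k_0d}\cj{e_\lambda}=0$ by $d$-periodicity, contradicting totality of $\{e_\lambda\}$.

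With the lemma, (ii) is immediate: $J_j'=J_j+d\subset\Omega+d$ while $J_k\subset\Omega$, so $J_j'\cap J_k\subset(\Omega+d)\cap\Omega$ is null for every $k$; since the intersection of two open intervals is an open interval, a null intersection is empty, whence $J_j'$ is genuinely disjoint from each $J_k$ with $k\neq j$ (it meets $J_i$ only at the common endpoint $\beta_i$). Thus $\Omega'$ is a true disjoint union with $|\Omega'|=|\Omega|$. For (iii) I would build the map $W\colon L^2(\Omega)\to L^2(\Omega')$ acting as the identity on $\bigoplus_{k\neq j}L^2(J_k)$ and as the translation $f\mapsto f(\,\cdot-d)$ from $L^2(J_j)$ onto $L^2(J_j')$; both spaces split as these orthogonal sums by (ii), so $W$ is unitary. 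Since $d\lambda\in\bz$ gives $e_\lambda(\,\cdot-d)=e_\lambda$ on $J_j'$, $W$ carries $e_\lambda|_\Omega$ to $e_\lambda|_{\Omega'}$, transporting the orthogonal basis $\{e_\lambda\}$ of $L^2(\Omega)$ onto $\{e_\lambda\}$ in $L^2(\Omega')$; hence $(\Omega',\Lambda)$ is a spectral pair with the same spectrum.

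Part (iv) is the mirror image, using $J_i'=J_i-d\subset\Omega-d$, the packing lemma with $k=-1$ for the disjointness, and the translation $f\mapsto f(\,\cdot+d)$ on the $J_i$-component in place of $W$. The main obstacle is the packing lemma itself: recognizing that $\Lambda\subset\frac1d\bz$ forces every spectral exponential to be $d$-periodic, and then converting that periodicity into genuine geometric disjointness of the translated interval. Once that step is in hand, parts (ii)--(iv) reduce to bookkeeping with the explicit unitary $W$.
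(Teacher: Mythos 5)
Your proposal is correct, and it replaces the one genuinely nontrivial step of the paper's proof with a different, more elementary device. Part (i) is identical to the paper: uniqueness of $j$ collapses the sums in Theorem \ref{th3.9}(i),(ii),(iv) to $b_{ij}=1$ and $e^{-2\pi i(\beta_i-\alpha_j)\lambda}=1$, so $\Lambda\subset\frac1d\bz$ with $d=\beta_i-\alpha_j$. Where you diverge is the disjointness claim (ii). The paper gets it indirectly: it forms the density $h=\frac1{|\Omega|}\bigl(\chi_{J_j'}+\sum_{k\neq j}\chi_{J_k}\bigr)$, uses the $d$-periodicity of the $e_\lambda$ to show (via the map $\Psi$, which is exactly the inverse of your unitary $W$) that $h(t)\,dt$ is a spectral measure with spectrum $\Lambda$, and then invokes the external theorem of \cite{DuLa12,Lai11} that the density of an absolutely continuous spectral measure must be constant on its support — any overlap would make $h$ take the value $\frac2{|\Omega|}$ somewhere, so there is none, yielding (ii) and (iii) in one stroke. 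Your packing lemma proves the same disjointness directly and self-containedly: since every $e_\lambda$ with $\lambda\in\frac1d\bz$ is $d$-periodic, a positive-measure overlap $\Omega\cap(\Omega+k_0d)$ would admit a two-bump test function $\chi_E-\chi_{E-k_0d}$ orthogonal to all the exponentials, contradicting totality. This is sound; the only point worth flagging is that your extraction of a subinterval $E$ of the overlap uses that $\Omega$ (hence the overlap) is a finite union of open intervals — which holds here, and in any case the argument survives for Borel $\Omega$ by taking instead a positive-measure subset of diameter less than $|k_0d|$. What each approach buys: the paper's is shorter modulo the citation and illustrates a structural principle about spectral measures that recurs elsewhere (it is the reason overlaps are incompatible with spectrality), while yours eliminates the dependence on \cite{DuLa12,Lai11} entirely and makes the theorem self-contained at the cost of an extra half page; your treatment of (iii) via the explicit unitary $W$ and of (iv) by mirror symmetry otherwise coincides with the paper's.
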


\begin{proof}
By Theorem \ref{th3.9}(i) we have that $b_{ik}=0$ for all $k\neq j$ and by Theorem \ref{th3.9}(ii) we get that $b_{ij}=1$. Then, by Theorem \ref{th3.9}(iv), we get that for any $\lambda\in\Lambda$ we must have $e^{-2\pi i(\beta_i-\alpha_j)\lambda}=1$ so $(\beta_i-\alpha_j)\lambda\in\bz$. This proves (i).
Also, it shows that, for $\lambda\in\Lambda$, $e_{\lambda}(t\pm(\beta_i-\alpha_j))=e_{\lambda}(t)$, $t\in\br$. 

Consider now the function $h:=\frac{1}{|\Omega|}\left(\chi_{J_j'}+\sum_{k\neq j}\chi_{J_k}\right)$. We will show that the measure $h(t)\,dt$ is a spectral measure with spectrum $\Lambda$. Note first that $\mu$ is a probability measure. For $f\in L^2(\mu)$, $\lambda\in\Lambda$, define the function $\Psi f$ on $\Omega$, $\Psi(f)(t)=f(t)$ if $t\in\cup_{k\neq j}J_k$ and $\Psi(f)(t)=f(t+\beta_i-\alpha_j)$ if $t\in J_j$. 
Note that $\Psi(e_\lambda)=\restr{e_\lambda}{\Omega}$. Also, $\Psi$ is an isometry from $L^2(\mu)$ onto $L^2(\Omega)$, up to the normalization constant $\frac{1}{|\Omega|}$. Therefore $\{e_\lambda : \lambda\in\Lambda\}$ is a spectrum for $L^2(\mu)$. But then this means that $h$ is constant on its support (see \cite{DuLa12,Lai11}). So $J_j'$ is disjoint from $J_k$ for $k\neq j$ and $\Omega'$ is spectral. 

(iv) follows in the same way.

\end{proof}

\begin{theorem}\label{th4.10}
Let $p\in\bn$ and let $\Omega=\cup_{i=0}^{p-1}(\frac{\alpha_i}p,\frac{\alpha_i+1}p)$, with $|\Omega|=1$, $\alpha_i\in\bz$ for all $i=0,\dots,p-1$, $0=\alpha_1<\alpha_2<\dots<\alpha_p$. Assume that $\Omega$ has spectrum $\Lambda$ with period $p$ (see Proposition \ref{pr3.9}), $\Lambda=\{\lambda_0=0,\lambda_1,\dots,\lambda_{p-1}\}+p\bz$. Let $U$ be the associated group of local translations and $B$ the spectral unitary matrix associated to $\Lambda$ as in Theorem \ref{th1.7}. Then  
\begin{equation}
B=M_{\alpha}D(\lambda)M_{\alpha}^*\mbox{ where }M_\alpha=\frac{1}{\sqrt p}\left(e^{2\pi i\frac1p\alpha_i\lambda_j}\right)_{i,j=0}^{p-1}, \quad D(\lambda)=\begin{bmatrix}
e^{2\pi i\frac1p\lambda_0}& &0\\
 &\ddots& \\
 0& &e^{2\pi i\frac1p\lambda_{p-1}}
 \end{bmatrix}
\label{eq4.10.1c}
\end{equation}
For $x\in\br$, let $a(x)$ be the unique integer such that $[x]_p:=x-\frac {a(x)}p\in[0,\frac1p)$. Then for $f\in L^2(\Omega)$, $t\in\br$,
\begin{equation}
\begin{pmatrix}
	(U(t)f)(x+\frac{\alpha_0}p)\\
	\vdots\\
	(U(t)f)(x+\frac{\alpha_{p-1}}p)
\end{pmatrix}
=B^{a(x+t)}
\begin{pmatrix}
	f([x+t]_p+\frac{\alpha_0}p)\\
	\vdots\\
	f([x+t]_p+\frac{\alpha_{p-1}}p)
\end{pmatrix},\quad(x\in(0,\frac1p))
\label{eq4.10.2c}
\end{equation}
\end{theorem}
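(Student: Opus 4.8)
The plan is to treat the two assertions separately: derive the factorization \eqref{eq4.10.1c} from Proposition \ref{pr3.12}, and then verify the dynamical formula \eqref{eq4.10.2c} first on the Fourier basis $\{e_\lambda:\lambda\in\Lambda\}$ and upgrade to general $f$ by continuity.

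For \eqref{eq4.10.1c} I would apply Proposition \ref{pr3.12} to the tuple $\lambda=(\lambda_0,\dots,\lambda_{p-1})\in\Lambda^p$. Since the left endpoints of $\Omega$ are $\alpha_i/p$ and the right endpoints are $(\alpha_i+1)/p$, the matrices of that proposition become $M_{\alpha,\lambda}=\frac1{\sqrt p}(e^{2\pi i\alpha_i\lambda_j/p})_{i,j}=M_\alpha$ and $M_{\beta,\lambda}=\frac1{\sqrt p}(e^{2\pi i(\alpha_i+1)\lambda_j/p})_{i,j}$. The key observation is that the extra factor $e^{2\pi i\lambda_j/p}$ appearing in $M_{\beta,\lambda}$ depends only on the column index $j$, so $M_{\beta,\lambda}=M_\alpha D(\lambda)$. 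It remains to see that $M_\alpha$ is unitary: for $x\in(0,1/p)$ one checks directly that $\Omega_x=\{\alpha_0,\dots,\alpha_{p-1}\}$ is independent of $x$, so equation \eqref{eq4.19.1} of Proposition \ref{pr4.19} is precisely the statement that the columns of $M_\alpha$ are orthonormal. Then Proposition \ref{pr3.12} gives $B=M_{\beta,\lambda}M_{\alpha,\lambda}^{-1}=M_\alpha D(\lambda)M_\alpha^*$.

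For \eqref{eq4.10.2c} I first record the eigen-relation this factorization produces. Writing $v_j$ for the $j$-th column of $M_\alpha$, unitarity gives $M_\alpha^*v_j=\delta_j$ (the $j$-th standard basis vector), hence $Bv_j=e^{2\pi i\lambda_j/p}v_j$ and $B^mv_j=e^{2\pi i\lambda_jm/p}v_j$ for every $m\in\bz$. Next I would verify \eqref{eq4.10.2c} for $f=e_\lambda$ with $\lambda\in\Lambda$, writing $\lambda=\lambda_j+pn$. On the left, $U(t)e_\lambda=e^{2\pi i\lambda t}e_\lambda$, so the $k$-th entry is $e^{2\pi i\lambda t}e^{2\pi i\lambda(x+\alpha_k/p)}$. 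On the right, using $e^{2\pi i\lambda\alpha_k/p}=e^{2\pi i\lambda_j\alpha_k/p}=\sqrt p\,(v_j)_k$ (because $n\alpha_k\in\bz$), the input vector equals $e^{2\pi i\lambda[x+t]_p}\sqrt p\,v_j$; applying $B^{a(x+t)}$ and using the eigen-relation together with $e^{2\pi i\lambda_ja(x+t)/p}=e^{2\pi i\lambda a(x+t)/p}$ (because $na(x+t)\in\bz$) and the defining identity $[x+t]_p+a(x+t)/p=x+t$, the $k$-th entry collapses to $e^{2\pi i\lambda(x+t)}e^{2\pi i\lambda\alpha_k/p}$, which matches the left side. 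Thus \eqref{eq4.10.2c} holds for every $e_\lambda$, all $x\in(0,1/p)$ and all $t\in\br$.

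Finally I would pass to arbitrary $f\in L^2(\Omega)$. Both sides of \eqref{eq4.10.2c}, viewed as maps sending $f$ to the $\bc^p$-valued function of $x$ they define, are bounded operators into $L^2([0,1/p),\bc^p)$: the left side is $f\mapsto W(U(t)f)$ with $W$ the isometry of Theorem \ref{pr4.8} (here $k_j(x)\equiv\alpha_j$, so $\M_x\equiv M_\alpha$ on $(0,1/p)$), while the right side is $Wf$ followed by the substitution $x\mapsto[x+t]_p$ on $[0,1/p)$ and the unitary multiplier $B^{a(x+t)}$. Since $B$ is unitary and $x\mapsto[x+t]_p$ is a measure-preserving bijection of $[0,1/p)$, the right side is also an isometry. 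Two bounded operators that agree on the total set $\{e_\lambda:\lambda\in\Lambda\}$ agree everywhere, so the identity holds in $L^2([0,1/p),\bc^p)$, that is, for a.e. $x$. I expect the only delicate point, and hence the main obstacle, to be exactly this last bookkeeping: confirming measurability of $a(\cdot)$ and of $x\mapsto B^{a(x+t)}$, and that $x\mapsto[x+t]_p$ is measure preserving, so that the right-hand side genuinely defines a bounded operator; the algebra on the basis itself is routine.
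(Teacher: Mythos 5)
Your proof is correct, and it reaches \eqref{eq4.10.2c} by a different mechanism than the paper. The paper's proof is a two-line application of Theorem \ref{pr4.8}: it computes $k_i(x)=\alpha_i-a(x)$, deduces $\M_x=M_\alpha D(\lambda)^{-a(x)}$, and then reads \eqref{eq4.10.2c} off the already-established intertwining relation $WU_\Lambda(t)=\U_p(t)W$ of \eqref{eq4.8.7}, since $\M_x\M_{x+t}^*=M_\alpha D(\lambda)^{a(x+t)}M_\alpha^*=B^{a(x+t)}$ for $x\in[0,\frac1p)$. You bypass \eqref{eq4.8.7} entirely: you verify the formula by hand on the total set $\{e_\lambda:\lambda\in\Lambda\}$ via the eigen-relation $B^mv_j=e^{2\pi i\lambda_j m/p}v_j$, and then extend by boundedness, using Theorem \ref{pr4.8} only for the isometry $W$ (to see both sides are bounded operators into $L^2([0,\frac1p),\bc^p)$). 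In effect your basis computation re-proves, in this special case, the identity $\U_p(t)F_{i,n}=e_{\lambda_i+np}(t)F_{i,n}$ that the paper established inside the proof of Theorem \ref{pr4.8}; what this buys is a nearly self-contained argument, at the cost of the density and measurability bookkeeping at the end, which you handle correctly (the cancellations $n\alpha_k\in\bz$ and $na(x+t)\in\bz$ are exactly what make the computation independent of the representative $\lambda_j+pn$, and $x\mapsto[x+t]_p$ is rotation of the circle $[0,\frac1p)$, hence measure preserving, with $a(x+t)$ a two-valued step function of $x\in[0,\frac1p)$). A further point in your favor: the paper disposes of \eqref{eq4.10.1c} with the bare citation of Proposition \ref{pr3.12}, which as stated requires knowing that $M_{\alpha,\lambda}$ is non-singular; you supply this explicitly via Proposition \ref{pr4.19} --- the orthonormality \eqref{eq4.19.1} with $\Omega_x=\{\alpha_0,\dots,\alpha_{p-1}\}$ for $x\in(0,\frac1p)$ is precisely unitarity of $M_\alpha$ --- thereby closing a small step the paper leaves implicit (it is the same fact that makes the matrices $\M_x$ of Theorem \ref{pr4.8} unitary, so the two proofs ultimately draw on the same source).
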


\begin{proof}
Equation \eqref{eq4.10.1c} follows from Proposition \ref{pr3.12}.

We use Theorem \ref{pr4.8}. For $x\in(0,\frac1p)$ the sets $\Omega_x$ are all equal to $\{\alpha_0,\dots,\alpha_{p-1}\}$. Also $k_i(x)=\alpha_i-a(x)$, $x\in\br$, $i=0,\dots,p-1$. Therefore 
$$\M_x=\frac{1}{\sqrt{p}}\left(e^{2\pi i \frac{\alpha_i-a(x)}p\cdot\lambda_j}\right)_{i,j=0}^{p-1}=M_\alpha D(\lambda)^{-a(x)}.$$

Then, for $x\in[0,\frac1p)$, $a(x)=0$ and 
$$\begin{pmatrix}
	(U(t)f)(x+\frac{\alpha_0}p)\\
	\vdots\\
	(U(t)f)(x+\frac{\alpha_{p-1}}p)
\end{pmatrix}
=(WU(t)f)(x)=(\U_p(t)Wf)(x)=\M_x\M_{x+t}^*(Wf)(x+t)$$$$=M_\alpha D(\lambda)^{a(x+t)}M_\alpha^*
\begin{pmatrix}
	f([x+t]_p+\frac{\alpha_0}p)\\
	\vdots\\
	f([x+t]_p+\frac{\alpha_{p-1}}p)
\end{pmatrix}$$
This is \eqref{eq4.10.2c}.
\end{proof}

\section{Two intervals}

\begin{theorem}\label{th6.1}
Let $\Omega$ be a union of two intervals, $|\Omega|=1$, $\Omega=(0,w)\cup(w+\rho,1+\rho)$, $0<w<1$, $\rho>0$. Then $\Omega$ is spectral if and only if one the following two conditions holds:
\begin{enumerate}
	\item $\rho$ is an integer and $w\neq\frac12$. In this case the only spectrum that contains $0$ is $\Lambda=\bz$, $\Omega$ is $\bz$-translation congruent to $(0,1)$ and it tiles $\br$ by $\bz$. The spectral unitary matrix $B$ associated to $\Lambda$ as in Theorem \ref{th3.4} is 
	$$B=\begin{pmatrix}
	0&1\\
	1&0
	\end{pmatrix}.$$
	
	For $x\in\br$ denote by $x\mod\Omega$ the unique point in $\Omega$ such that $x-x\mod\Omega\in\bz$. Then the group of local translations $U_\Lambda$ is 
	\begin{equation}
(U_\Lambda(t)f(x)=f((x+t)\mod\Omega),\quad(x\in\Omega, f\in L^2(\Omega)).
\label{eq6.1.0}
\end{equation}

	\item $w=\frac12$, $w+\rho=\frac l2\in\frac12\bz$, $1+\rho=\frac{l+1}2$. In this case, any spectrum that contains $0$ is of the form 
	\begin{equation}
\Lambda=\{0,\frac{2k+1}{l}\}+2\bz,
\label{eq6.1.1}
\end{equation}
for some $k\in\{ 0,\dots,l-1\}$. 
\end{enumerate} 
The associated spectral unitary matrix is 
$$B=\begin{pmatrix}
	\frac{1+\xi}{2}&\frac{1-\xi}{2}\\ \frac{1-\xi}2&\frac{1+\xi}2
\end{pmatrix},$$
where $\xi=e^{\pi i\frac{2k+1}{l}}.$

Let $a(x)$ be the unique integer such that $x\in[\frac{a(x)}2,\frac{a(x)+1}2)$, $x\in\br$ and let $[x]_2=x-\frac{a(x)}2\in[0,\frac12)$. The group of local translations is 
\begin{equation}
(U_\Lambda(t)f)(x)=\frac{1+\xi^{a(x+t)}}2f([x+t]_2)+\frac{1-\xi^{a(x+t)}}2f([x+t]_2+\frac l2)\quad\mbox{ if }x\in[0,\frac12)
\label{eq6.1.3}
\end{equation}
\begin{equation}
(U_\Lambda(t)f)(x+\frac l2)=\frac{1-\xi^{a(x+t)}}2f([x+t]_2)+\frac{1+\xi^{a(x+t)}}2f([x+t]_2+\frac l2)\quad\mbox{ if }x\in[0,\frac 12).
\label{eq6.1.4}
\end{equation}
In a shorter form 
\begin{equation}
\begin{pmatrix}
	(U(t)f)(x)\\
	(U(t)f)(x+\frac l2)
\end{pmatrix}= B^{a(x+t)}\begin{pmatrix}
	f([x+t]_2)\\
	f([x+t]_2+\frac l2)
\end{pmatrix},\quad(x\in[0,\frac12)).
\label{eq6.1.5}
\end{equation}
\end{theorem}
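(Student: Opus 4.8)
The plan is to route the two-interval case through the Section 4 machinery, replacing ``$\Omega$ is spectral'' by ``$\Omega$ admits a spectral unitary matrix $B$'' via Corollary \ref{cor3.6} and Theorem \ref{th3.4}, and then to reduce everything to the single circle-parameter $\xi$ that governs a $2\times2$ spectral matrix. First I would normalize: since multiplication by $e_{-\lambda_0}$ is a unitary of $L^2(\Omega)$ sending $e_\lambda$ to $e_{\lambda-\lambda_0}$, every spectrum is a translate of one containing $0$, and by Proposition \ref{pr3.7} this only conjugates the boundary matrix; so I assume $0\in\Lambda$. Then $B\mathbf 1=\mathbf 1$ and spectrality forces $\ker(I-B)=\bc\mathbf 1$ (Definition \ref{def3.4}). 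Diagonalizing a $2\times2$ unitary with simple eigenvector $\mathbf 1$ and orthogonal eigenvector $(1,-1)^t$ of eigenvalue $\xi=e^{i\theta}$ on the unit circle, $\xi\neq1$, yields exactly
\[
B=B_\xi=\begin{pmatrix}\frac{1+\xi}2&\frac{1-\xi}2\\[2pt]\frac{1-\xi}2&\frac{1+\xi}2\end{pmatrix},
\]
so the whole question becomes: for which $(w,\rho)$ is some $B_\xi$ spectral, and what $\xi$ results.

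\textbf{Reducing ``spectral'' to a testable condition.} With $\alpha_1=0,\beta_1=w,\alpha_2=w+\rho,\beta_2=1+\rho$, a direct expansion using $(1+\xi)^2-(1-\xi)^2=4\xi$ gives
\[
\det\bigl(I-D_\beta(\lambda)^*B_\xi D_\alpha(\lambda)\bigr)=1-\tfrac{1+\xi}2\bigl(e^{-2\pi i\lambda w}+e^{-2\pi i\lambda(1-w)}\bigr)+\xi e^{-2\pi i\lambda}.
\]
I would then record two facts: the off-diagonal entry of $D_\beta(\lambda)^*B_\xi D_\alpha(\lambda)$ equals $\tfrac{1-\xi}2e^{2\pi i\lambda\rho}\neq0$, so the kernel at any root is exactly one-dimensional; and $\mathbf 1$ lies in that kernel iff $B_\xi e_\lambda(\alpha)=e_\lambda(\beta)$. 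Hence $B_\xi$ is spectral precisely when \emph{every} real root of the determinant passes the eigenvector test $B_\xi e_\lambda(\alpha)=e_\lambda(\beta)$. Splitting this vector equation into its sum and difference components produces the $\xi$-free constraint $(1-e^{2\pi i\lambda w})+e^{2\pi i\lambda(w+\rho)}(1-e^{2\pi i\lambda(1-w)})=0$ together with one equation carrying $\xi$.

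\textbf{The case split (the main obstacle).} I would organize by whether the intervals have equal length. For necessity I feed $0\in\Lambda$ and the period $p$ into Theorem \ref{th3.9}: condition (vi) forces $\beta_1\equiv_p\alpha_2$ and $\beta_2\equiv_p\alpha_1$, i.e. $p\rho\in\bz$, and the eigenvalue matching of $D_\alpha(p),D_\beta(p)$ produces the permutation $\sigma$. If $pw\notin\bz$ then $\sigma$ must be the transposition, and parts (i)--(ii) of Theorem \ref{th3.9} pin $B$ to the swap $\left(\begin{smallmatrix}0&1\\1&0\end{smallmatrix}\right)$ (so $\xi=-1$); part (iv) gives $e^{2\pi i\rho\lambda}=1$ on $\Lambda$, and a density argument (a spectrum must have density $|\Omega|=1$) upgrades this to $\rho\in\bz$, $\Lambda=\bz$, which is case (i); here $w\neq\frac12$ is exactly what excludes the competing equal-length spectra and secures the uniqueness claim. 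If instead $pw\in\bz$ then all endpoints lie in $\frac1p\bz$, and I substitute $\zeta=e^{-2\pi i\lambda/p}$ to rewrite the determinant as the self-inversive polynomial $P(\zeta)=\xi\zeta^p-\tfrac{1+\xi}2(\zeta^a+\zeta^{p-a})+1$ with $a=pw$. The requirement that all $p$ roots lie on the unit circle (needed for a real density-$1$ spectrum) is the heart of the matter: the clean computation is that $P$ factors through $\zeta^{p/2}$ exactly when $a=p-a$, i.e. $w=\frac12$, where $P=(\zeta^{p/2}-1)(\xi\zeta^{p/2}-1)$ has all roots on the circle, whereas for $w\neq\frac12$ and $\xi\neq-1$ the all-roots-on-the-circle property and the eigenvector test cannot both hold. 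When $w=\frac12$ the $\xi$-free equation factors as $(1-e^{\pi i\lambda})(1+e^{2\pi i\lambda(w+\rho)})=0$, whose periodicity compels $2(w+\rho)=l\in\bz$ (the half-integer gap), and the $\xi$-equation then selects $\xi=e^{\pi i(2k+1)/l}$ and $\Lambda=\{0,\frac{2k+1}l\}+2\bz$, giving case (ii).

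\textbf{Converse and explicit formulas.} For the converse I verify directly that each listed $B$ is spectral and read off $\Lambda_B$ from the determinant. The local-translation descriptions are then immediate. In case (i), since $\Omega$ is $\bz$-translation congruent to $(0,1)$ and tiles by $\bz$, translation ``mod $\Omega$'' acts on each $e_n$ ($n\in\bz$) by $e^{2\pi int}$, identifying it with $U_\Lambda$ and giving \eqref{eq6.1.0}. In case (ii) the set has exactly the form required by Theorem \ref{th4.10} with $p=2$, $\alpha_0=0$, $\alpha_1=l$, so \eqref{eq4.10.2c} specializes at once to \eqref{eq6.1.5}, and splitting into the two scalar components yields \eqref{eq6.1.3}--\eqref{eq6.1.4}; a quick check that $M_\alpha$ is the Hadamard matrix and $D(\lambda)=\operatorname{diag}(1,\xi)$ recovers the stated $B$. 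I expect the genuinely hard step to be the unit-circle-roots analysis for $w\neq\frac12$, that is, proving unequal interval lengths admit no spectral matrix beyond the swap.
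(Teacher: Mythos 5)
Much of your scaffolding is sound, and in places tidier than the paper's own proof: the normalization to $0\in\Lambda$ via Proposition \ref{pr3.7}, the observation that $B\mathbf 1=\mathbf 1$ together with $\ker(I-B)=\bc\mathbf 1$ forces $B$ to be the one-parameter family $B_\xi$ with $\xi\neq 1$, the determinant identity, the subcase $pw\notin\bz$ (Theorem \ref{th3.9} indeed pins $B$ to the swap, parts (iv)--(v) give $\Lambda\subset\bz$ and $\rho\Lambda\subset\bz$, and the count of $p$ residues per period yields $\Lambda=\bz$, $\rho\in\bz$), the factorization and the constraints $2(w+\rho)\in\bz$, $\xi=e^{\pi i(2k+1)/l}$ when $w=\frac12$, and the derivation of \eqref{eq6.1.0} and \eqref{eq6.1.3}--\eqref{eq6.1.5} from Theorems \ref{pr4.8} and \ref{th4.10} all check out, as does the routine converse. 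The fatal problem is the lattice subcase $pw\in\bz$ with $w\neq\frac12$: there you merely assert that ``the all-roots-on-the-circle property and the eigenvector test cannot both hold,'' and you yourself flag this as the expected hard step. It is not proved, and, worse, the criterion you propose to exploit is vacuous. For \emph{every} unitary $B$, any complex zero $\lambda$ of $\det(I-D_\beta(\lambda)^*BD_\alpha(\lambda))$ produces an eigenfunction of the self-adjoint extension $A_B$ (Theorem \ref{th2.12}), hence $\lambda$ is real; consequently all roots of your polynomial $P(\zeta)=\xi\zeta^p-\frac{1+\xi}{2}(\zeta^a+\zeta^{p-a})+1$ lie on the unit circle for every $\xi$ and every $a$ (note $P(0)=1\neq0$, and $P$ is self-inversive: $\zeta^p\overline{P(1/\bar\zeta)}=\bar\xi P(\zeta)$). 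So ``roots on the circle'' cannot distinguish spectral from non-spectral $\xi$, and no density argument can be hung on it.

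What actually must be shown is that for $a\neq p/2$ the eigenvector test $B_\xi e_\lambda(\alpha)=e_\lambda(\beta)$ --- equivalently your $\xi$-free sum equation --- fails at some root of $P$ unless $\xi=-1$; the case $\xi=-1$ is then easy, since the determinant collapses to $1-e^{-2\pi i\lambda}$ and the kernel test at each $\lambda\in\bz$ gives $e^{2\pi i\lambda\rho}=1$, hence $\rho\in\bz$ and $\Lambda=\bz$. This exclusion is exactly where the paper invests its real work: it transfers the question, via Theorem \ref{pr4.8} and Proposition \ref{pr4.19}, to spectrality of the finite set $S=\{0,\dots,k-1\}\cup\{a,\dots,a+r-1\}$, invokes \cite{DuJo12a} to realize $S$ as an orthonormal orbit $\{U(s)v_0 : s\in S\}$ of a one-parameter unitary group, and then runs the combinatorial ``arrow'' analysis (Cases $r<k$, $k<r$, $k=r$), showing that when $r\neq k$ the operator $U(1)$ permutes the lines $\bc v_s$ cyclically, which forces $p\mid a-k$, i.e.\ $\rho\in\bz$ and $\Lambda=\bz$; only the balanced case $k=r$ (i.e.\ $w=\frac12$) survives and produces the extra spectra \eqref{eq6.1.1}. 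You offer no substitute for this argument, so the necessity direction remains unproved --- and with it the uniqueness claim in case (i), since a rational $w\neq\frac12$ with $\rho\in\bz$ lands precisely in your unresolved subcase.
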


\begin{proof}
Let $\Lambda$ be a spectrum for $\Omega$, that contains $0$. Then $\Lambda$ has some period $p$ in $\bn$, $\Lambda=\{\lambda_0=0,\dots,\lambda_{p-1}\}+p\bz$, with $\lambda_0,\dots,\lambda_{p-1}$ distinct in $[0,p)$. 

Suppose first that $0\not\equiv_p w$. Then, by Theorem \ref{th3.9}, $0\equiv_p 1+\rho$ and the matrix $B$ has to be of the form $$B=\begin{pmatrix}
	0&1\\
	1&0
	\end{pmatrix}.$$
	
	By Corollary \ref{cor3.6}, we have 
	$$\Lambda=\left\{\lambda\in\br : \begin{pmatrix}
	0&1\\
	1&0
	\end{pmatrix}\begin{pmatrix}1\\ e^{2\pi i (w+\rho)\lambda}\end{pmatrix}=\begin{pmatrix} e^{2\pi iw\lambda}\\ e^{2\pi i(1+\rho)\lambda}\end{pmatrix}\right\}=\{\lambda : e^{2\pi i(w+\rho)\lambda}=e^{2\pi iw\lambda}, 1=e^{2\pi i(1+\rho)\lambda}\} $$
	$$=\{\lambda : \rho\lambda, (1+\rho)\lambda\in\bz\}=\{\lambda : \lambda,\rho\lambda\in\bz\}.$$
	
	But this implies that $\lambda_0,\dots,\lambda_{p-1}$ are in $\bz\cap[0,p)$ and, since they are all distinct, they must be $0,\dots,p-1$. So $\Lambda=\bz$. Since $1\in\Lambda$ we get that $\rho\in\bz$. 
	Then we see that a translation of $(w+\rho,1+\rho)$ by the integer $-\rho$ will transform $\Omega$ into $(0,1)$ so $\Omega$ is $\bz$-translation congruent to $(0,1)$ and also it tiles $\br$ by $\bz$.

	Now assume $0\equiv_p w$. By Theorem \ref{th3.9}(vi), we cannot have $0\not\equiv_p 1+\rho$. Therefore, in this case $w,w+\rho,1+\rho\in\frac1p\bz$. 
	
	Let 
	$$\mbox{$w=\frac kp$, $w+\rho=\frac ap$ and $\rho+1=\frac{a+r}p$ with $a,p,r\in\bn$, $0<k<p$, $a>k$, $r>0$, $k+r=p$.}$$ 
	
	Since $\Omega$ is a union of $p$ intervals of the form $(\frac ip,\frac{i+1}p)$, it $p$-tiles $\br$ by $\frac1p\bz$-translations. The sets $\Omega_x$, $x\in(0,\frac1p)$ defined in Theorem \ref{pr4.8}, are 
	$$\Omega_x=\{0,1,\dots,k-1\}\cup\{a,a+1,\dots,a+r-1\}=:S,$$
	and they have spectrum $\frac1p\{\lambda_0,\dots,\lambda_{p-1}\}$.
	
	By \cite[Theorem 1.5]{DuJo12a}, there exists a Hilbert space $\H$, a vector $v_0\in \H$ and a strongly continuous one-parameter group $(U(t))_{t\in\br}$ such that $\{U(t)v_0 : t\in S\}$ is an orthonormal basis for $\H$. 
	
	From \cite[Theorem 1.5, Remark 2.1]{DuJo12a}, we see that $U(t)$ is the multiplication operator by the function $e_t$ on the discrete $L^2$-space of the spectrum $\frac1p\{\lambda_0,\dots,\lambda_{p-1}\}$. So the spectrum of $U(t)$ is 
	$$\{e^{2\pi i t\frac1p\lambda_i} :i=0,\dots,p-1\}.$$

	We will use the following notation: $U(i)v_0=:v_i$, for $i\in S$; we denote by $\{i_1,\dots,i_q\}$ the linear span of the vectors $\{v_{i_1},\dots, v_{i_q}\}$. We write 
	$$\{i_1,\dots, i_q\}\nar{t} \{j_1,\dots,j_s\}\quad \mbox{ if $U(t)$ maps the first subspace into the second}.$$

	Since $\{0,\dots,k-2\}\cup \{a,\dots,a+r-2\}\nar{1}\{1,\dots, k-1\}\cup\{a+1,\dots,a+r-1\}$, taking orthocomplements we must have $\{k-1,a+r-1\}\nar{1}\{0,a\}$.
	
	We distinguish several cases:

	{\it Case 1.} $r<k$.

	We have $r\nar{a-1}a+r-1\nar{1}\{0,a\}$ so $r\nar{a}\{0,a\}$. But, $0\nar{a}a$ and therefore $r\nar{a}0$. 
	Then $a+r-1\nar{1-a}r\nar{a}0$ so $a+r-1\nar 1 0$. Finally, taking complements we must have $k-1\nar 1 a$.
	
	Therefore the unitary $U(1)$ permutes cyclically the subspaces $0,1,\dots,k-1,a,a+1,\dots,a+r-1$ moving each one to the next in line and the last one to $0$. Note also that for all except $k-1$ and $a+r-1$ we have $U(1)v_i=v_{i+1}$ and 
	$U(1)v_{k-1}=\alpha v_a$, $U(1)v_{a+r-1}=\beta v_0$ for some $|\alpha|=|\beta|=1$. 
	
	Then $U(p)=\alpha\beta I$. But since 1 is in the spectrum of $U(1)$, we get that $\alpha\beta=1$. From this we see also that $j\nar l j$ iff $l$ is divisible by $p$, for any $j$. But we have 
	$k-1\nar{a-k+1} a$ and $k-1\nar 1 a$ so $k-1\nar{ a-k+1} a\nar{-1}k-1$, therefore $a-k$ is divisible by $p$, say $a-k=pl$ for some $l\in\bz$. Note that this implies that $\rho=\frac{a-k}p=l$ is an integer and therefore we are in the case (i), $\Omega$ is $\bz$-translation congruent to $(0,1)$ and tiles $\br$ by $\bz$-translations.  
	
	Then we have 
	$$v_a=U(a-k+1)v_{k-1}=U(lp+k-k+1)v_{k-1}=U(1)v_{k-1}=\alpha v_a.$$
	This implies that $\alpha=1$ so $\beta=1$. 
	
	Since $U(p)=1$ it follows that its spectrum is 
	$$\{1\}=\{e^{2\pi i\lambda_i} : i=0,\dots,p-1\}.$$
	So $\lambda_i$ are all in $\bz\cap[0,p)$. Since they are distinct, we get that they cover $0,\dots,p-1$ and so $\Lambda=\bz$. 
	
	Let us compute the matrix $B$ in this case. We use Proposition \ref{pr3.12}. We take $\lambda_0=0$ and $\lambda_1=1$. We have that the matrix 
	$$\M_\alpha=\begin{pmatrix}
	1&1\\
	1&e^{2\pi i(w+\rho)}\end{pmatrix}=\begin{pmatrix}
	1&1\\
	1&e^{2\pi iw}\end{pmatrix}$$
	is non-singular $(0<w<1)$. Therefore 
	$$B=M_\beta M_\alpha^{-1}=\begin{pmatrix}
	1&e^{2\pi iw}\\
	1&e^{2\pi i(1+\rho)}\end{pmatrix}\frac{1}{e^{2\pi iw}-1}\begin{pmatrix}
	e^{2\pi iw}&-1\\
	-1&1\end{pmatrix}=\begin{pmatrix}
	0&1\\
	1&0\end{pmatrix}$$

	{\it Case 2}. $k<r$.

	This can be treated similarly to Case 1. The role of the two intervals is reversed and we get the results in (i).

	{\it Case 3.} $k=r$. 
	
	In tis case we get that $p=2k$ so $w=\frac12$. Then, as in the previous cases we get that $\{k-1,a+k-1\}\nar{1}\{0,a\}$. Therefore $U(1)$ permutes cyclically the subspaces $\{i,a+i\}$ $i=0,\dots,k-1$ and if , for some $i=0,\dots,k-1$, $U(j)v_i$ is in $\{i,a+i\}$ then $j$ is a multiple of $k$. But since $U(a)v_0=v_a$, this implies that $a$ is a multiple of $k$, $a=kl$ for some $l\in\bz$. Then $w=\frac12$, $w+\rho=\frac ap=\frac{l}{2}$.
	
	Since all endpoints are in $\frac12\bz$ it follows by Proposition \ref{pr3.9} that any spectrum has period 2. So we can take $p=2$. Then the sets $\Omega_x$ for $[0,\frac12)$ are all 
	$$\Omega_x=\{0,l\}.$$
	
	We have that $\frac12\{0,\lambda_1\}$ is a spectrum for $\Omega_x$ so 
	$1+e^{2\pi i\frac12\lambda_1\cdot l}=0$ which is equivalent to $\lambda_1\cdot l$ is an odd integer, $\lambda_1=\frac{2k+1}l$. 
	Therefore any spectrum for $\Omega$ is of the form \eqref{eq6.1.1}. Since $\Lambda$ has period 2, we can also replace $\frac{2k+1}l$ by $\frac{2k+1}l-2a$ to get the same set $\Lambda$ and therefore we can pick $0\leq k<l$.

	Conversely, any set of this form is a spectrum, by Theorem \ref{pr4.8}.
	
	To find the unitary spectral matrix $B$ we use Proposition \ref{pr3.12}. Take $\lambda_0=0$, $\lambda_1=\frac{2k+1}l$. 
	Then 
	$$M_\alpha=\begin{pmatrix}
	1&1\\
	1&e^{2\pi i\frac{2k+1}l\cdot \frac{l}{2}},\end{pmatrix}
	=\begin{pmatrix}
	1&1\\
	1&-1
\end{pmatrix},\quad
	M_\beta=
\begin{pmatrix}
	1& e^{2\pi i\frac{2k+1}l\cdot\frac12}\\
	1&e^{2\pi i \frac{2k+1}l\cdot\frac{l+1}2}
\end{pmatrix}
=\begin{pmatrix}
	1&\xi\\
	1&-\xi
\end{pmatrix}.
$$
Then 
$$B=M_{\beta}M_{\alpha}^{-1}=\begin{pmatrix}1&\xi\\1&-\xi\end{pmatrix}\cdot
\frac12 \begin{pmatrix}
	1&1\\1&-1
\end{pmatrix}=\begin{pmatrix}
	\frac{1+\xi}{2}&\frac{1-\xi}{2}\\ \frac{1-\xi}2&\frac{1+\xi}2
\end{pmatrix}.
$$

	To determine the group of local translations, we use Theorem \ref{pr4.8}. In case (i), the spectrum is $\bz$ and the period is $1$. The matrices $\mathcal M_x$ are $\mathcal M_x=1$. Therefore $(\U_1(t)F)(x)=F(x+t)$, $x,t\in\br$, $F\in L^2([0,1),\bc^2)$. The isomorphism $W:L^2(\Omega)\rightarrow L^2([0,1),\bc^2)$ is $(Wf)(x)=f(x\mod\Omega)$. Then \eqref{eq4.8.7} implies \eqref{eq6.1.0}.
	
	For case (ii), we have that the period $p=2$, $k_0(x)=-a(x)$, $k_1(x)=-a(x)+l$, $\lambda_0=0$, $\lambda_1=\frac{2k+1}{l}$.
	$$\M_x=\frac1{\sqrt2}\begin{pmatrix}
	1&e^{2\pi i \frac{2k+1}l\frac{-a(x)}2}\\
	1&e^{2\pi i \frac{2k+1}l\frac{-a(x)+l}2}
\end{pmatrix}
=\frac{1}{\sqrt{2}}\begin{pmatrix}
	1&\xi^{-a(x)}\\
	1&-\xi^{-a(x)}
\end{pmatrix}
$$
Then 
$$\M_x\M_{x+t}^*=\frac12 \begin{pmatrix}
	1+\xi^{a(x+t)-a(x)}&1-\xi^{a(x+t)-a(x)}\\
	1-\xi^{a(x+t)-a(x)}&1+\xi^{a(x+t)-a(x)}.
\end{pmatrix}$$
	For $x\in [0,\frac12)$, $a(x)=0$ and for $F\in L^2([0,\frac12),\bc^2)$,
	$$(\U_2(t)Wf)(x)=\M_x\M_{x+t}^*(Wf)(x+t)=\frac12 \begin{pmatrix}
	1+\xi^{a(x+t)}&1-\xi^{a(x+t)}\\
	1-\xi^{a(x+t)}&1+\xi^{a(x+t)}
\end{pmatrix}
\begin{pmatrix}
	f([x+t]_2)\\
	f([x+t]_2+\frac l2)
\end{pmatrix}
$$
But, by \eqref{eq4.8.7}, this has to equal 
$$(WU(t)f)(x)=\begin{pmatrix}
	(U(t)f)(x)\\
	(U(t)f)(x+\frac l2)
\end{pmatrix}
	$$
	This implies \eqref{eq6.1.3}, \eqref{eq6.1.4}.

	By induction, one can check that 
	$$B^k=\begin{pmatrix}
	\frac{1+\xi^k}2&\frac{1-\xi^k}2\\
	\frac{1-\xi^k}2&\frac{1+\xi^k}2
\end{pmatrix}$$
and this implies \eqref{eq6.1.5}
\end{proof}

\begin{acknowledgements}
This work was partially supported by a grant from the Simons Foundation (\#228539 to Dorin Dutkay).
This work was done while the first named author (PJ) was visiting the University of Central Florida. We are grateful to the UCF-Math Department for hospitality and support. The authors are pleased to thank Professors Deguang Han and Qiyu Sun for helpful conversations. PJ was supported in part by the National Science Foundation, via a Univ of Iowa VIGRE grant. 
\end{acknowledgements}
\bibliographystyle{alpha}
\bibliography{eframes}

\end{document}